\newtheorem{teo}{Theorem}[section]
\newtheorem{lema}[teo]{Lemma}
\newtheorem{propo}[teo]{Proposition}
\newtheorem{defi}[teo]{Definition}
\newtheorem{obs}[teo]{Remark}
\newtheorem{coro}[teo]{Corollary}
\newtheorem{exem}[teo]{Example}
\newcommand{\cn}{\langle n \rangle}
\newcommand{\dis}{\displaystyle}
\author[1]{\textbf{Irina Sviridova}}
\author[2]{\textbf{Renata A. Silva}\thanks{Both authors are partially supported by CAPES and CNPq.}}
\affil[1]{\small Mathematics Department -- University of Brasília \\ 70.910-900 -- Brasília, DF, Brazil \\  Email: \texttt{i.sviridova@mat.unb.br}}
\affil[2]{\small Mathematics Department -- Federal University of Northern Tocantins\\ \small 77.826-612 -- Araguaína, TO, Brazil \\ \small Email: \texttt{renata.silva@ufnt.edu.br}}
\date{}
\title{Hook Theorem for Superalgebras with Superinvolution or Graded Involution}
\begin{document}

\maketitle

\begin{abstract} 

\noindent We consider a superalgebra with a superinvolution or graded involution $\#$ over a field $F$ of characteristic zero and assume that it is a $PI$-algebra. In this paper, we present the proof of a version of the celebrated hook theorem \cite{SAR} for the case of multilinear $\#$-superidentities. This theorem provides important combinatorial characteristics of identities in the language of symmetric group representations. Furthermore, we present an analogue of Amitsur identities for $\#$-superalgebras, which are polynomial interpretations of the mentioned combinatorial characteristics, as a consequence of the hook theorem.

\end{abstract}

\noindent\textbf{Mathematical Subject Classification MSC2020: 16R10, 16R50, 16W50, 16W55, 16W10.}\\
\noindent{\sc Keywords}. Superalgebra, superinvolution, graded involution, polynomial identities, superidentities with graded involution,  superidentities with superinvolution, representations of the direct products of symmetric groups, cocharacters, Young diagrams, Amitsur polynomials.

\vspace{0.5cm}

\section{Introduction}
\onehalfspacing
%\doublespacing 

Let $A$ be an associative algebra over a field $F$ of characteristic zero. Let $F\langle X \rangle$ be the associative, non-commutative and non-unitary free algebra  generated by an infinite countable set $X=\{x_1,x_2,\cdots\}$. We say that a polynomial $f(x_1, \cdots, x_n) \in F\langle X \rangle$ is a polynomial identity of $A$ if $f(a_1, \cdots, a_n)= 0$ for all $a_1, \cdots, a_n \in A$. We call $A$ $PI$-algebra if $A$ satisfies a non-trivial polynomial identity. For example, Amitsur and Levitsky \cite{AL} proved that the standard polynomial of degree $2k$, given by $St_{2k}(x_1, \cdots, x_ {2k})= \displaystyle \sum_{\sigma\in S_{2k}}(-1)^\sigma x_{\sigma(1)}x_{\sigma(2)}\cdots x_{\sigma(2k )}$, is a polynomial identity of the smallest possible degree (minimum degree) for the algebra of matrices of order $k$ over $F$. Other principal results of $PI$-algebra theory may be found in \cite{DR,DrenForm,RV}.

We denote by $Id(A)$ the set of ordinary polynomial identities of $A$. The set $Id(A)$ is a $T$-ideal, that is, an ideal of $F\langle X \rangle$ invariant under all endomorphisms of $F\langle X \rangle$. Thus, to describe the identities of $A$, it is enough to find the generators of $Id(A)$ as a $T$-ideal. Since the description of a base of identities of $Id(A)$ is not always an easy task, in order to overcome this difficulty, in 1972, Regev \cite{AR} introduced the sequence of codimensions (ordinary case), which measures the growth of the $T$-ideal of identities for $A$. The $n$-th codimension of $A$, $c_n(A)$, is defined as the dimension of the quotient space $P_n(A)=\displaystyle \frac{P_n}{P_n \cap Id(A)}$ , where $P_n$ is the space of the multilinear polynomials of degree $n$. The importance of considering spaces $P_n$ in $PI$-theory lies in the fact that, over a field of characteristic zero, every polynomial identity of $A$ is equivalent to a finite set of multilinear polynomials.

In \cite{AR}, Regev also showed that the sequence of codimensions of a $PI$-algebra $A$, $(c_n(A))_{n\geq 1}$, is exponentially bounded (while $\dim P_n=n!$), that is, there is a positive integer $ d $ such that, for every $ n \geq 1 $, $c_n (A) \leq d^n $. As a consequence of Regev Theorem for ordinary identities, it can be shown that, if $ A $ has an additional structure, for example, of graded algebra, superalgebra with superinvolution or graded involution $ \# $ (see \cite{ARA}), then its corresponding sequence of codimension $ (c_n^{grs} (A))_{n \geq 1} $ is also exponentially bounded. 

Over the years, this important result of Regev served to obtain several other theorems in $PI$-algebra. In particular, Amitsur and Regev, in \cite{SAR} (see also \cite{RV}), proved the hook theorem for the case of ordinary polynomial identities of $ A $, which is one of the most essential combinatorial results for polynomial identities.

The hook theorem is the central result of applying group representation theory to the study of polynomial identities. This theorem is a crucial foundation for many other significant results in the study of polynomial identities. One of the main applications of the hook theorem are the crucial points of the proof of the well-known Kemer's Theorem \cite{Kem1}, which guarantees that any $PI$-algebra, over a field of characteristic zero, has the same identities as the Grassmann envelope $E(A)$ of a superalgebra $ A$ of a finite dimension. The relevance of this result is given by responding positively to Specht's conjecture, proving that any $T$-ideal of identities of a $PI$-algebra over a field of characteristic zero is finitely generated. We highlight that in 2010, Aljadev and Belov proved Kemer's Theorem for $G$-graded algebras, where $G$ is a finite group (see \cite{AljBelov}). In 2011, Sviridova demonstrated a similar result for superalgebras (see \cite{IRINA}). Later, in 2016, Aljadev, Giambruno, and Karasik showed in \cite{alj1} that Kemer's Theorem also applies to algebras with involution $*$.

The hook and strip theorems have also played a fundamental role in the in the creation and development of the renowned growth theory of polynomial identities, a field that has attracted significant attention from researchers in recent years. This area investigates the behavior of combinatorial characteristics of identities, such as codimensions, colengths, multiplicities in cocharacters, exponents, and others. Some of the key results of the last years can be found in \cite{A.B.1,gmv1,RV,ioio3}. We also mention some recent results of this area, related with graded identities, identities with involution and superidentities with superinvolution \cite{CG1,IOIO,AV2}. Observe that this area has experienced significant development in recent years, with numerous published papers and many researchers actively contributed to the field. While it is impossible to mention everyone, we apologize for any omission of important results or key contributors to the growth theory.

A version of the hook theorem, considering $A$ an algebra with involution or a superalgebra, was demonstrated by Regev and Giambruno in \cite{RG}. The authors used the theory of representation of the wreath product $\hat{\mathbb{Z}}_2=\mathbb{Z}_2\wr S_n$ of the group $\mathbb{Z}_2$ by the symmetric group $S_n$, applying it to the study of $\mathbb{Z}_2$-identities (superidentities) or $*$-identities (identities with  involution $*$) of $A$, that is, identities with $\hat{ \mathbb{Z}}_2$ action, and proved that the corresponding $n$-th cocharacter
$$
\chi_{n}(A| \mathbb{Z}_2)= \displaystyle \sum_{|\lambda| + |\mu|=n}m_{\lambda, \mu} \chi_{\lambda, \mu} \ \ \text{(for superidentities)}
$$
or
$$
\chi_{n}(A| *)= \displaystyle \sum_{|\lambda| + |\mu|=n}m_{\lambda, \mu} \chi_{\lambda, \mu} \ \ \text{(for} \ * \text{-identities)}
$$
is contained in a double hook $(\mathcal{H}(d_1, l_1), \mathcal{H}(d_2, l_2))$, with $d_1, l_1, d_2,l_2$ nonnegative integers.

Let consider an associative superalgebra $A=A_0\oplus A_1$, over a field of characteristic zero, with a graded involution or a superinvolution ($\#$-superalgebra). 

Note that superidentities with superinvolution and superidentities with graded involution behave mostly in similar ways and can be considered in the same manner. We call them $\#$-superidentities. In general, they can be considered as polynomials in 4 different types of variables: even symmetric $Y_0$, even skew $Z_0$, odd symmetric $Y_1$, odd skew $Z_1$, and form the representation space of the direct product of 4 copies of symmetric groups $S_{n_1} \times S_{n_2} \times S_{n_3} \times S_{n_4}$ acting independently in any type of these variables.

Let $n \geq 1$ be an integer, where $n=n_1 + n_2 + n_3+ n_4$ a sum of four nonnegative integers, and define   $\cn= (n_1, n_2, n_3, n_4)$. The first main contribution of this article is to prove the following result.

\noindent \textbf{Theorem \ref{teoprin}:} \textit{(The hook theorem for $\#$-superalgebras) Let $F$ be a field of characteristic zero and $A$ a $\#$-superalgebra over $F$. If $A$ is a $PI$-algebra, then there exist integers $d_i, l_i\geq 0$, with $i= 1,2,3, 4$, such that the $\cn$-th cocharacter of $\#$-superidentities, $\chi_{\cn}(A)$, is contained in a quadruple hook $\mathcal{H}_{\langle d, l \rangle}\langle n \rangle= (\mathcal{H}(d_1, l_1),\mathcal{H}(d_2, l_2), \mathcal{H}(d_3, l_3), \mathcal{H}(d_4, l_4) )$, that is, 
$$
\chi_{\langle n\rangle}(A)= \displaystyle\sum_{\overset{
\langle \lambda \rangle\vdash \cn}
{\langle \lambda \rangle \in \mathcal{H}_{\langle d, l \rangle}\langle n \rangle}}
m_{\langle \lambda \rangle}\chi_{\langle \lambda \rangle}.
$$}

As a consequence of Theorem \ref{teoprin}, we have a version of Amitsur's Theorem for $\#$-superalgebras represented in Corollary \ref{amitsur1}, which is mostly some combinatorial improvement of numerical parameters of the corresponding $\#$-superidentities of the form $St_{k_i}^{m_i} \equiv 0$ comparing it with the classic result.

Amitsur and Regev also proved in \cite{SAR} (see also Theorem 4.7.2 in \cite{RV}) that any $PI$-algebra satisfies some Amitsur identity, and this identity is an interpretation of the hook theorem on the language of polynomials. This correspondence is a pure combinatorial fact. It is worth mentioning as well that the Amitsur identity is the natural generalization of the famous Capelli identity (\cite{DR,DrenForm,RV,Kem1}).
Thus, it is a natural problem to find a polynomial translation of Theorem \ref{teoprin} for $\#$-superidentities. In particular, A. Berele asked the authors
in a discussion if there exist some analogues of Amitsur polynomials for $\#$-superidentities. We are grateful to A. Berele for this useful observation and give a positive answer to this question in the last section of the paper.

We define polynomials $E_{\langle d,l\rangle},$ which are some analogues for $\#$-superidentities of the classic Amitsur polynomial $Am_{(d,l)}$ (see Definition \ref{CapSup1}) and prove Theorem \ref{AmPol}.

\noindent \textbf{Theorem  \ref{AmPol}:}
\textit{Let $A$ be a superalgebra with superinvolution or graded involution $\#$ and
$
\chi_{\cn}(A) = \dis\sum_{\langle\lambda\rangle\vdash \cn}m_{\langle\lambda\rangle}\chi_{\langle\lambda\rangle}
$
its $\cn$-th $\#$-cocharacter. If $A$ is a $PI$-algebra, then $A$ satisfies $E_{\langle d,l\rangle}$ (the Amitsur $\#$-superidentity of rank $\langle d,l\rangle$) for some $\langle d,l\rangle=(d_1,l_1;d_1,l_2;d_3,l_3;d_4,l_4)$ if, and only if, $m_{\langle\lambda\rangle} = 0$ whenever $\langle\lambda\rangle \notin \mathcal{H}_{\langle d,l\rangle}$.}

Theorems \ref{teoprin} and \ref{AmPol} jointly imply that any $\#$-$PI$-superalgebra over a field of characteristic zero satisfies an Amitsur $\#$-superidentity $E_{\langle d,l\rangle}.$  It completes the description of $\#$-superidentities in the language of shapes of Young diagrams.

\section{Superalgebras, Superinvolutions, Graded Involutions, Identities.}

Throughout this work, all algebras will be considered as associative algebras over a field $F$ of characteristic zero.

Let $A = A_0 \oplus A_1$ be an associative superalgebra over a field $F$. A superinvolution in $ A $ is a graded linear application $\#: A \rightarrow A$ such that $ (c^\#)^\# = c $ for all $c \in A$ and $(ab)^{\#} = (-1)^{deg (a) deg (b)} b^{\#} a^{\#}$ for every homogeneous elements $ a, b \in A_0 \cup A_1 $, where $ deg (d) $ is the homogeneous degree of $ d \in A_ {0} \cup A_ {1} $. In particular, there hold $ A_0^{\#} \subseteq A_0 $ and $ A_1^{\#} \subseteq A_1 $. 

A graded involution in $A$ is a linear application $*: A \rightarrow A$ such that 
$(c^*)^* = c$ for all $c \in A$ and $ (ab)^{*} = b^{*} a^{*} $ for all elements $ a, b \in A$. In addition, $*$ is a graded map, that is, $A_0^{*} \subseteq A_0$ and $A_1^{*} \subseteq A_1$. Note that the unique difference between a superinvolution $\#$ and a graded involution $*$ of a superalgebra $A$ is the condition $(ab)^{\#} = - b^{\#} a^{\#}$ for all odd elements $a, b \in A_1$ for a superinvolution $\#$, while $(ab)^{*} = b^{*} a^{*}$ holds for all odd elements $a, b \in A_1$ for a graded involution $*$.

By this similarity, we will treat a superinvolution or a graded involution in the same way and denote both applications by the same symbol $\#$ throughout the paper. When necessary, differences for each of these mappings will be mentioned. 

If $\#$ is a superinvolution or a graded involution of a superalgebra $A$, then we say that $A$ is a $\#$-superalgebra. Hereafter, unless otherwise specified, we will consider $A$ exclusively as a $\#$-superalgebra.

Since $ char (F) \neq 2 $, it can be shown that $ A = A^{+} \oplus A ^ {-} $, where $A^{+} = \{a \in A \ | \ a^{\#} = a \} $ and $ A^{-} = \{a \in A \ | \ a^{\#} = -a \} $ the symmetric and skew sets of $ A $, respectively. Thus, since $ A $ is a superalgebra, it can be written as a sum of  four subspaces
\begin{equation} \label{A-decomp}
A = A_0^{+} \oplus A_0^{-} \oplus A_1^{+} \oplus A_1^{-},
\end{equation}
where $ (A_i)^+ = \{a \in A_i \ | \ a^\# = a \} $ and $ (A_i)^- = \{a \in A_i \ | \ a^\# = - a \} $ for $ i = 0 $ and 1.

Consider $\mathcal{F} = F\langle X | \mathbb {Z}_2, \# \rangle $ the free algebra over a field $F$ of characteristic zero, generated by a countable set $X$. We write the set $X$ as a disjoint union of four countable sets $ X = Y_0 \cup Z_0 \cup Y_1 \cup Z_1 $, where $ Y_0 = \{y_ {0,1}, y_ {0,2}, \cdots \}, \ Z_0 = \{z_ {0,1}, z_ {0,2}, \cdots \}, \ Y_1 = \{y_ {1,1}, y_ {1,2}, \cdots \}, \ Z_1 = \{z_ {1,1}, z_ {1,2}, \cdots \} $ are sets of variables that are even symmetric, even skew, odd symmetric and odd skew, respectively.

Note that $\mathcal{F}$ can be equipped with a superstructure assuming that the variables of $ Y_0 \cup Z_0 $ and $ Y_1 \cup Z_1 $ are homogeneous of degree $ 0 $ and $ 1 $, respectively. Therefore, $ \mathcal{F} = \mathcal{F}_0 \oplus \mathcal{F}_1 $, where $ \mathcal{F}_0 $ is the subspace generated by all monomials that have an even number of variables of degree $ 1 $ and $ \mathcal{F}_1 $ is the subspace generated by all monomials that have an odd number of variables of degree $ 1 $. 

In addition to this superstructure, we can define a superinvolution or a graded involution in the free superalgebra $\mathcal{F}$. To do this, let $\#:\mathcal{F} \to \mathcal{F}$ be the map defined by $y_{i,j}^\#=y_{i,j}$, $z_{i,j}^\#=-z_{i,j}$, for $i=0,1$ and $j\geq 1$, and either
\begin{equation} \label{supinv}
w^\#=(x_{i_1}x_{i_2}\cdots x_{i_k})^\# = (-1)^{\frac{s(s-1)}{2}} x_{i_k}^\#x_{i_{k-1}}^\#\cdots x_{i_1}^\#= (-1)^{\frac{s(s-1)}{2}} (-1)^{t} x_{i_k}x_{i_{k-1}}\cdots x_{i_1}
\end{equation}
for a superinvolution $\#,$ or
\begin{equation} \label{grinv}
w^\#=(x_{i_1}x_{i_2}\cdots x_{i_k})^\# = x_{i_k}^\#x_{i_{k-1}}^\#\cdots x_{i_1}^\#= (-1)^{t}x_{i_k}x_{i_{k-1}}\cdots x_{i_1}
\end{equation}
for a graded involution $\#,$ 
where $w$ is a monomial in the indeterminates $x_{i_l}\in Y_0\cup Z_0\cup Y_1\cup Z_1$, $s=deg_{Y_1\cup Z_1}(w)$ is the number of odd indeterminates in the monomial $w$, and $t= deg_{Z_0\cup Z_1}(w)$ is the number of skew indeterminates in the monomial $w$. Thus, the variables of $ Y_0 \cup Y_1 $  are symmetric with respect to $\#$ and the variables of $ Z_0 \cup Z_1 $ are skew.

Now, extend the map $\#$ by linearity to the entire algebra $\mathcal{F}$. We see that $\#$ is, in fact, a superinvolution on $\mathcal{F}$ in the first case (\ref{supinv}), and a graded involution in the second case (\ref{grinv}). Thus, $ \mathcal{F} $ has a structure of $\#$-superalgebra, since $ \mathcal{F}_0^{\#} = \mathcal{F}_0 $ and $\mathcal{F}_1^{\#} = \mathcal{F}_1$. 
We call by $\#$-superpolynomials the elements of $\mathcal{F}$.

Let
 $$f= f(y_{0,1}, \cdots, y_{0,m}, z_{0,1}, \cdots, z_{0,n},y_{1,1}, \cdots, y_{1,p},z_{1,1}, \cdots, z_{1,q}  )\in F\langle X| \mathbb{Z}_2,\#\rangle$$
be a nonzero $\#$-polynomial. We say that $f$ is a $\#$-superidentity (a superidentity with superinvolution or superidentity with graded involution) of a  $\#$-superalgebra $A$, and we write $f\equiv 0$ in $A$, if 
\begin{equation}\label{eqsuper1}
f= f(a_{0,1}, \cdots, a_{0,m}, b_{0,1}, \cdots, b_{0,n},a_{1,1}, \cdots, a_{1,p},b_{1,1}, \cdots, b_{1,q}  )=0
\end{equation}
for all $a_{0,1}, \cdots, a_{0,m}\in A_0^{+},\ b_{0,1}, \cdots, b_{0,n} \in A_0^{-},\ a_{1,1}, \cdots, a_{1,p}\in A_1^{+},\ b_{1,1}, \cdots, b_{1,q} \in A_1^{-} $. We define
$$
Id_{2}^{\#}(A):=\{f\in F\langle X| \mathbb{Z}_2,\#\rangle \ | \ f\equiv 0\ \text{in}\ A\}
$$
the  $T_{2}^{\#}$-ideal of $\#$-superidentities of $A$, that is, a graded $\#$-invariant ideal of $\mathcal{F}$, closed under all graded endomorphisms of $F\langle X| \mathbb{Z}_2,\#\rangle$ that commute with $\#$. Since we are assuming that  $char (F)=0$, due to the multilinearization process, the ideal $ Id_{2}^{\#}(A)$ is completely determined by its multilinear polynomials, and so we define
$$P_n^{grs}:= \text{span}_{F}\{w_{\sigma(1)}\cdots w_{\sigma(n)}|\ \sigma\in S_n, w_i=y_{j,i}\ \text{or}\ w_i=z_{j,i},\ j=0,1,\ i=1,\cdots, n \}$$
the space of multilinear $\#$-superpolynomials of degree $n$ on the first $n$ variables.

\begin{defi} The non-negative number 
$$c_n^{grs}(A)= dim \frac{P_n^{grs}}{P_n^{grs}\cap Id_{2}^{\#}(A)} $$
is called $n$-th $\#$-graded codimension of $A$.
\end{defi}

In [\citealp{AR}], A. Regev showed that, if $ A $ is an ordinary $ PI $-algebra, then the sequence of codimensions of ordinary identities  of $ A $ is exponentially bounded (see also [\citealp{RV}]), that is, there is a real number $ d \geq 1 $ such that, for every $ n \geq 1 $,
\begin{equation} \label{REG1}
c_n (A) \leq d^n.
\end{equation}

The following result relates the $n$-th $\#$-graded codimension with the ordinary codimension of $A$. In \cite[p. 257]{RV}, the authors proved a similar result for the case of a $PI$-algebra satisfying a nontrivial $G$-identity, where $G$ is a finite group of $Aut^*(A)$, the group of automorphisms and antiautomorphisms of $A$. We will present the proof here for the sake of completeness.

\begin{lema}  \label{Lem 1}
 Let $ A $ be a $ \# $-superalgebra. Then, for any $ n \geq 1 $, we have
$$c_n^{grs}(A)\leq 4^nc_n(A).$$
\end{lema}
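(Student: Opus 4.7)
My plan is to decompose $P_n^{grs}$ according to the type of each variable slot, then use the natural bijection with $P_n$ to bound each piece by $c_n(A)$ separately.

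First I would write $P_n^{grs}$ as the direct sum
$$P_n^{grs}=\bigoplus_{\bar{\epsilon}}P_n^{\bar{\epsilon}},$$
indexed by tuples $\bar{\epsilon}=(\epsilon_1,\ldots,\epsilon_n)$ with each $\epsilon_i\in\{(0,+),(0,-),(1,+),(1,-)\}$, where $P_n^{\bar{\epsilon}}$ is spanned by those multilinear monomials in which the variable occupying position $i$ is of type $\epsilon_i$ (so it is drawn from $Y_0$, $Z_0$, $Y_1$ or $Z_1$ accordingly). There are exactly $4^n$ such tuples, and each summand $P_n^{\bar{\epsilon}}$ is carried to itself by any $\#$-superidentity relation (since the relevant endomorphism/action preserves the type of each slot), so
$$P_n^{grs}\cap Id_2^{\#}(A)=\bigoplus_{\bar{\epsilon}}\bigl(P_n^{\bar{\epsilon}}\cap Id_2^{\#}(A)\bigr),$$
and consequently $c_n^{grs}(A)=\sum_{\bar{\epsilon}}\dim\bigl(P_n^{\bar{\epsilon}}/(P_n^{\bar{\epsilon}}\cap Id_2^{\#}(A))\bigr)$.

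Next, for each fixed $\bar{\epsilon}$ I would define the linear isomorphism $\Phi_{\bar{\epsilon}}\colon P_n\to P_n^{\bar{\epsilon}}$ that sends the ordinary variable $x_i$ to the (unique) $\#$-variable of type $\epsilon_i$ sitting in slot $i$. This is a bijection on the level of vector spaces because both sides have the same basis, indexed by $S_n$. The key observation is that $\Phi_{\bar{\epsilon}}$ carries $P_n\cap Id(A)$ into $P_n^{\bar{\epsilon}}\cap Id_2^{\#}(A)$: indeed, if $f(x_1,\ldots,x_n)$ is an ordinary identity, then $f$ vanishes on \emph{all} substitutions of elements of $A$, and in particular it vanishes on those substitutions where the $i$-th entry is taken from $A_{0}^{+}$, $A_{0}^{-}$, $A_{1}^{+}$ or $A_{1}^{-}$ according to $\epsilon_i$, which is precisely the condition for $\Phi_{\bar{\epsilon}}(f)$ to be a $\#$-superidentity.

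Consequently $\Phi_{\bar{\epsilon}}$ induces a surjection
$$P_n/(P_n\cap Id(A))\twoheadrightarrow P_n^{\bar{\epsilon}}/(P_n^{\bar{\epsilon}}\cap Id_2^{\#}(A)),$$
giving $\dim(P_n^{\bar{\epsilon}}/(P_n^{\bar{\epsilon}}\cap Id_2^{\#}(A)))\le c_n(A)$ for every $\bar{\epsilon}$. Summing over the $4^n$ tuples yields the desired inequality $c_n^{grs}(A)\le 4^n c_n(A)$. The argument is essentially bookkeeping; the only point requiring a little care is the direct-sum decomposition of $P_n^{grs}\cap Id_2^{\#}(A)$ along the type tuples, which I would justify by noting that the defining condition (\ref{eqsuper1}) of a $\#$-superidentity treats the four types of variables independently, so one can evaluate each homogeneous (by type) component separately.
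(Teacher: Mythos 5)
Your argument is correct and is essentially the paper's own proof in a slightly different packaging: the paper takes a basis $f_1,\dots,f_k$ of $P_n$ modulo $P_n\cap Id(A)$ and substitutes the four types of variables for $x_1,\dots,x_n$ in all $4^n$ possible ways, using $Id(A)\subseteq Id_2^{\#}(A)$ to conclude that the resulting $4^n c_n(A)$ elements span $P_n^{grs}$ modulo $Id_2^{\#}(A)$. Your decomposition of $P_n^{grs}$ into $4^n$ type-homogeneous summands, each a surjective image of $P_n/(P_n\cap Id(A))$ via $\Phi_{\bar{\epsilon}}$, is the same substitution-and-count argument organized as a direct sum, and the extra step you add (that $P_n^{grs}\cap Id_2^{\#}(A)$ splits along the type tuples) is correctly justified and in fact not even needed for the inequality.
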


\begin{proof} Let $\{f_1(x_1,\cdots,x_n),\cdots,f_k(x_1,\cdots,x_n)\}$ be a basis of the space $P_{n}$ of all ordinary multilinear polynomials of degree $n$ modulo $P_n\cap Id(A)$, where $k=c_n(A)$. Then, for every $\sigma\in S_n$, we can write the monomials of $P_n$ as
$$
x_{\sigma(1)}\cdots x_{\sigma(n)}\equiv \displaystyle \sum_{i=1}^k \alpha_{i,\sigma}f_i(x_1,\cdots,x_n)
$$
modulo $P_n\cap Id(A)$. Now, consider the following notation. Given a variable $x_i$, $i\in\{1,\cdots,n\}$, we will substitute $x_i={w_i}$ with a variable in the set $Y_0\cup Z_0\cup Y_1\cup Z_1$, where $w_i\in \{y_{0,i}, z_{0,i}, y_{1,i}, z_{1,i}\}$ for all $i=1,\cdots, n$, in all possible ways. Thus,
$$
w_{\sigma(1)}\cdots w_{\sigma(n)} \equiv \displaystyle \sum_{i=1}^k \alpha_{i,\sigma}f_i(w_1,\cdots,w_n)
$$
modulo $Id_2^\#(A)$, since $Id(A)\subseteq Id_2^\#(A)$. Hence, the set
$$
\{f_i(w_1,\cdots,w_n); \ w_j\in \{y_{0,j}, z_{0,j}, y_{1,j}, z_{1,j}\}, \ 1\leq i\leq k,\ \ 1\leq j\leq n\}
$$
generates the space $P_n^{grs}$ modulo $Id_2^\#(A)$ and, therefore,
$$
c_n^{grs}(A)\leq 4^nc_n(A),
$$
as we wanted.

\end{proof}
Thus, from Lemma \ref{Lem 1} and from (\ref{REG1}), the following corollary follows immediately.

\begin{coro} \label{corregv}
Let $ A $ be a $ \# $-superalgebra. If $ A $ is a $ PI $-algebra, then its sequence of $ \# $-graded codimension $ (c_n^{grs} (A))_{n \geq 1} $ is exponentially bounded, that is, there is a real number $ \overline {d} $ satisfying for every $ n \geq 1 $
$$
c_n^{grs} (A) \leq (\overline {d})^n.
$$
\end{coro}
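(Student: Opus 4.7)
The statement is explicitly flagged in the text as following ``immediately'' from Lemma \ref{Lem 1} together with Regev's bound (\ref{REG1}), so the plan is to chain those two inequalities and choose the constant. I will not need any new machinery.

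The plan is first to invoke the hypothesis that $A$ is a $PI$-algebra in the ordinary sense (being a $\#$-superalgebra and $PI$ is given). By Regev's theorem, as recorded in (\ref{REG1}), there exists a real number $d \geq 1$ such that $c_n(A) \leq d^n$ for every $n \geq 1$. Next, I apply Lemma \ref{Lem 1}, which was proved just above: for the same algebra $A$ and every $n \geq 1$,
$$
c_n^{grs}(A) \leq 4^n c_n(A).
$$
Combining these two estimates gives $c_n^{grs}(A) \leq 4^n d^n = (4d)^n$, so setting $\overline{d} := 4d$ finishes the argument.

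There is essentially no obstacle here; the only minor care is to point out that the constant $\overline{d}$ can be taken as $4d$ (in particular $\overline{d} \geq 4$), and that the assumption ``$A$ is a $PI$-algebra'' is needed precisely to feed Regev's theorem into Lemma \ref{Lem 1}. The harder conceptual work (reducing $\#$-graded multilinear polynomials to ordinary multilinear ones by the four-fold variable substitution $x_i \mapsto w_i \in \{y_{0,i},z_{0,i},y_{1,i},z_{1,i}\}$) has already been carried out in the proof of Lemma \ref{Lem 1}, so at the level of the corollary the argument is just the one-line composition of the two bounds.
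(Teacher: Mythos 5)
Your argument is correct and is exactly the one the paper intends: the corollary is stated as an immediate consequence of Lemma \ref{Lem 1} and Regev's bound (\ref{REG1}), and your chaining $c_n^{grs}(A)\leq 4^n c_n(A)\leq (4d)^n$ with $\overline{d}=4d$ is precisely that composition. Nothing further is needed.
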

 
\section{$FS_{\cn}$-representations and Action of $S_{\cn}$ in $P_{\cn}$}

We begin this subsection recalling some concepts from the well-known representation theory of the symmetric group $S_n$ over a field $F$ of characteristic zero, utilizing Young tableaux, where $n \geq 1$.

\begin{defi} Let $n \geq 1$ be an integer. A \emph{partition} $\lambda$ of $n$ is a sequence of natural numbers $\lambda = (\lambda_1, \cdots, \lambda_l)$ such that $\lambda_1 \geq \cdots \geq \lambda_l$ and $\sum_{i=1}^l \lambda_i = n$. In this case, we write $\lambda \vdash n$ and $|\lambda| = n$. Also, given a partition $\lambda = (\lambda_1, \cdots, \lambda_l)$ of $n$, a \emph{Young diagram} of shape $\lambda$, denoted by $D_\lambda$, is a matrix or array of $n$ boxes arranged in rows such that the $i$-th row contains $\lambda_i$ boxes. The boxes are left-justified and, by convention on partitions, the lengths of the rows are non-increasing. The number $h(\lambda)=l$ is called the height of $D_\lambda$ (or of $\lambda$).
\end{defi}

We can represent a Young diagram by the set
$$
D_{\lambda} = \{ (i, j) \in \mathbb{Z} \times \mathbb{Z} \mid i = 1, \cdots, l;\ j = 1, \cdots, {\lambda}_i \}.
$$
For example, the diagram $D_{(5,3,2,1)}$, corresponding to the partition of $n=11$, is represented by
$$
D_{(5,3,2,1)} = \yng(5,3,2,1)
$$
and its height is equal to $h((5,3,2,1))=4$. 

We assume $D_{\lambda}=\emptyset$ is the empty diagram of the zero partition $\lambda=(0)$ for $n=0$, where the correspond symmetric group $S_0$ is the trivial group.

\begin{defi} A \emph{Young tableau} of shape $\lambda$ is a bijective assignment of the integers ${1, \cdots, n}$ to the $n$ boxes of the Young diagram. We denote a Young tableau of shape $\lambda$ by $T_{\lambda} = D_{\lambda}(a_{ij})$, where $a_{ij}$ is the assignment to the box that is in the $i$-th row and $j$-th column.
\end{defi}

For example, a Young tableau of shape $\lambda=(5,3,2,1)$, a partition of $n=11$, is the following:
$$ 
T_{(5,3,2,1)}= \ytableausetup{mathmode, boxsize=1.2em,centertableaux}
\begin{ytableau}
1 & 2 & 3 & 4 & 5\\
6 & 7 & 8 \\
9 & 10 \\
11
\end{ytableau}.
$$

Let $n\geq 1$ be an integer. It is possible to define an action of the symmetric group $S_n$ in the set of Young tableaux in the following way: $S_n$ acts on a tableau $T_{\lambda}$ by permuting each entry individually. That is, if $\sigma \in S_n$ and $T_{\lambda} = D_{\lambda}(a_{ij})$, then
$$
\sigma T_{\lambda} = D_{\lambda}(\sigma a_{ij}).
$$

This action turns the space of Young tableaux into an $FS_n$-module and one of the main tools for describing the representations of the symmetric group. The reader can consult the references \cite{CR,jlieb,ben1} for the general theory of group representations and, for the specific case of group $S_n$, Young diagram and tableau applications, the books \cite{fulton, RV, jkerb, sagan}.

Now, we will generalize these concepts and some results of $S_n$-representation theory for the direct product of four symmetric groups.

For an integer $ n\geq 1 $, write $ n = n_1 + n_2 + n_3 + n_4 $, where each $ n_i\geq 0 $, and denote $ \langle n \rangle = (n_1, n_2, n_3, n_4 ) $. We say that $ \langle \lambda \rangle = (\lambda (1), \lambda (2), \lambda (3), \lambda (4)) $ is a multipartition of $\cn$ and we write $ \langle \lambda \rangle \vdash \langle n\rangle $ if
$ \lambda (i) \vdash n_i $ is a partition of $ n_i $, $i=1,2,3,4$. Associated to a multipartition $ \langle \lambda \rangle $, 
we define $T_{\langle\lambda\rangle} = (T_{\lambda(1)}, T_{\lambda (2)}, T_{\lambda(3)}, T_{\lambda (4)}) $ its Young multitableau of shape $ \langle \lambda \rangle $, where each $ T_{\lambda (i)} $ is a Young tableau corresponding to the partition $ \lambda (i) $.

We will give here some important results on the representation theory of the group $S_{\langle n \rangle}:= S_{n_1}\times S_{n_2}\times S_{n_3}\times S_{n_4}=\{\langle\sigma\rangle = (\sigma_1,\sigma_2,\sigma_3,\sigma_4) \ | \ \sigma_i\in S_{n_i}, \ i=1,2,3,4\}$, where we denote by $S_0=S_1$ the trivial group (for $n_i=0$ or $n_i=1$).

We consider left $FS_{\langle n\rangle}$-modules, where $F$ is a field of characteristic zero. It is well known that in this case $F S_{\langle n \rangle}$ is a semisimple finite dimensional associative algebra (see, e.g., \cite{CR, RV, jacobs}) and any
left $FS_{\langle n\rangle}$-module is completely reducible (\cite{CR, jlieb}). In addition, any irreducible $F S_{\langle n \rangle}$-character corresponds to a multipartition $ \langle \lambda \rangle$ and is equal to the tensor product of the irreducible characters of $ S_{n_1}, \cdots, S_{n_4 } $ corresponding to $\lambda (1), \lambda (2), \lambda (3), \lambda (4)$, respectively. 
We denote by $ \chi_{\langle \lambda \rangle} = \chi_{\lambda (1)} \otimes \chi_{\lambda (2)} \otimes \chi_{\lambda (3)} \otimes \chi_{ \lambda (4)} $ 
the $FS_{\cn} $-irreducible character associated to the multipartition $\langle \lambda \rangle $ and $ d_{\langle \lambda \rangle} = d_{\lambda (1)} \cdot d_{\lambda (2)} \cdot d_{\lambda (3)} \cdot d_{\lambda (4)} $ its degree, where $ d_{\lambda (i)} $ is the degree of the irreducible character $ \chi_{\lambda (i)} $, $ i = 1,2,3,4 $. We consider the decomposition of the character of any left $FS_{\langle n\rangle}$-module into irreducible characters as follows:
\begin{equation}\label{caracter}
\chi_{\cn} = \dis\sum_{\langle\lambda\rangle\vdash \cn}m_{\langle\lambda\rangle}\chi_{\langle\lambda\rangle},
\end{equation}   
where $ m_{\langle \lambda \rangle} $ is the multiplicity corresponding to the $ FS_{\langle n \rangle} $-irreducible character $ \chi_{\langle \lambda \rangle} $. Observe also that we can define naturally a left action of $S_{\langle n\rangle}$ on Young multitableaux by
$$
\langle \sigma\rangle T_{\langle \lambda \rangle} = (\sigma_1,\sigma_2,\sigma_3,\sigma_4) (T_{\lambda(1)}, T_{\lambda (2)}, T_{\lambda(3)}, T_{\lambda (4)}) = (\sigma_1 T_{\lambda(1)}, \sigma_2 T_{\lambda (2)}, \sigma_3 T_{\lambda(3)}, \sigma_4 T_{\lambda (4)}),
$$
where $\sigma_i$ acts on $T_{\lambda_{(i)}}$ by the corresponding permutation of elements of $T_{\lambda_{(i)}}$ (see \cite{DR,RV}). We also denote by $Id_i\in S_{n_i}$ the identical permutation of $S_{n_i}$, $i=1,2,3,4$.

Let $ \lambda \vdash n$ and $ T_{\lambda} $ be a Young tableau of shape $ \lambda $. We denote by
$$e_{T_{\lambda}}=\displaystyle\sum_{\overset{\sigma \in R_{T_{\lambda}}}{\tau \in C_{T_{\lambda}}}}(-1)^{\tau}\sigma \tau
$$
the minimal essential idempotent element of $FS_n$ corresponding to $T_{\lambda}$, where $R_{T_{\lambda}}$ and $C_{T_{\lambda}}$ are the subgroups of $ S_n $ stabilizing rows and columns of $ T _ {\lambda} $, respectively.

In what follows, we will naturally identify the group algebra $ FS_{\langle n \rangle} $ with the tensor product of group algebras $ FS_{n_1} \otimes_F FS_{n_2} \otimes_F FS_{n_3} \otimes_F FS_{n_4}$.

The proof of the next two results follows from \cite{RV} and the properties of the tensor product of algebras. We refer the reader to the books \cite{BH,CR,jkerb,sagan} for details.

\begin{lema}\label{etmu}
For every Young multitableau $T_{\langle\mu\rangle}$ of shape $\langle\mu\rangle\vdash \cn$, the element $e_{T_{\langle\mu\rangle}}=e_{T_{\mu (1)}}\otimes e_{T_{\mu (2)}}\otimes e_{T_{\mu (3)}}\otimes e_{T_{\mu (4)}}$ is a minimal essential idempotent of $FS_{\cn}$ and $FS_{\cn}e_{T_{\langle\mu\rangle}}$ is a minimal left ideal of the algebra $FS_{\cn}$ with character $\chi_{\langle\mu\rangle}$. If $T_{\langle\mu\rangle}$ and $T_{\langle\mu\rangle}^*$ are Young multitableaux of the same shape, then $e_{T_{\langle\mu\rangle}}$ and $e_{T^*_{\langle\mu\rangle}}$ are conjugated in $FS_{\cn}$ through some $\langle\sigma\rangle\in FS_{\cn}$. Moreover, $\langle\sigma\rangle e_{T_{\langle \mu\rangle}}\langle\sigma\rangle^{-1} = e_{{\langle\sigma\rangle}T_{\langle\mu\rangle}}=e_{T_{\langle\mu\rangle}^*}$.
\end{lema}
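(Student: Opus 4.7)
The plan is to reduce the statement entirely to the classical analogue for a single symmetric group (see \cite{RV, sagan}) by means of the canonical isomorphism
$$
FS_{\cn}\;\cong\;FS_{n_1}\otimes_F FS_{n_2}\otimes_F FS_{n_3}\otimes_F FS_{n_4}
$$
recalled just before the statement, under which a tuple $\langle\sigma\rangle=(\sigma_1,\sigma_2,\sigma_3,\sigma_4)\in S_{\cn}$ corresponds to the simple tensor $\sigma_1\otimes\sigma_2\otimes\sigma_3\otimes\sigma_4$ and multiplication is componentwise.

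For the first assertion I would argue directly on simple tensors. Since each $e_{T_{\mu(i)}}$ is an essential idempotent of $FS_{n_i}$, satisfying $e_{T_{\mu(i)}}^2=\gamma_i\, e_{T_{\mu(i)}}$ for some $\gamma_i\neq 0$, componentwise multiplication gives $e_{T_{\langle\mu\rangle}}^2=(\gamma_1\gamma_2\gamma_3\gamma_4)\, e_{T_{\langle\mu\rangle}}$, so $e_{T_{\langle\mu\rangle}}$ is an essential idempotent of $FS_{\cn}$. Moreover,
$$
FS_{\cn}\, e_{T_{\langle\mu\rangle}}\;=\;\bigotimes_{i=1}^{4}FS_{n_i}\, e_{T_{\mu(i)}}.
$$
Each factor is a minimal left ideal of $FS_{n_i}$ with irreducible character $\chi_{\mu(i)}$, and the standard fact that the outer tensor product of simple modules over semisimple factors is a simple module over the tensor algebra, with character equal to the tensor product of characters, gives at once that $FS_{\cn}\, e_{T_{\langle\mu\rangle}}$ is a minimal left ideal of $FS_{\cn}$ with character $\chi_{\mu(1)}\otimes\chi_{\mu(2)}\otimes\chi_{\mu(3)}\otimes\chi_{\mu(4)}=\chi_{\langle\mu\rangle}$. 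Dimension counting, $\prod_i d_{\mu(i)}=d_{\langle\mu\rangle}$, is an independent check of minimality.

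For the conjugation assertion, note that Young tableaux of the same shape lie in a single $S_{n_i}$-orbit, so for each $i$ there exists $\sigma_i\in S_{n_i}$ with $\sigma_i T_{\mu(i)}=T^*_{\mu(i)}$; set $\langle\sigma\rangle=(\sigma_1,\sigma_2,\sigma_3,\sigma_4)$. The classical single-factor identity $\sigma_i e_{T_{\mu(i)}}\sigma_i^{-1}=e_{\sigma_i T_{\mu(i)}}=e_{T^*_{\mu(i)}}$ follows from $\sigma_i R_{T_{\mu(i)}}\sigma_i^{-1}=R_{\sigma_i T_{\mu(i)}}$ and $\sigma_i C_{T_{\mu(i)}}\sigma_i^{-1}=C_{\sigma_i T_{\mu(i)}}$ applied to the definition of $e_{T_{\mu(i)}}$. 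Conjugating the simple tensor componentwise then yields $\langle\sigma\rangle\, e_{T_{\langle\mu\rangle}}\,\langle\sigma\rangle^{-1}=\bigotimes_{i=1}^{4} e_{T^*_{\mu(i)}}=e_{T^*_{\langle\mu\rangle}}$, which is the desired equality.

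The proof is essentially bookkeeping, so there is no substantial technical obstacle; the only care required is to keep the identification $FS_{\cn}\cong\bigotimes_i FS_{n_i}$ straight and to invoke the two classical facts that (i) minimality of essential idempotents and of the associated left ideals is preserved under outer tensor products of semisimple group algebras, and (ii) conjugation by $\sigma_i$ transports the row and column stabilizers consistently with its action on tableaux.
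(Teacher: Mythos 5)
Your argument is correct and is exactly the route the paper takes: the paper gives no written proof for this lemma, simply noting that it "follows from \cite{RV} and the properties of the tensor product of algebras," which is precisely the reduction to the single-factor case via $FS_{\cn}\cong FS_{n_1}\otimes\cdots\otimes FS_{n_4}$ that you carry out (including the standard facts that outer tensor products of absolutely irreducible modules over semisimple group algebras are irreducible and that conjugation transports row and column stabilizers). Nothing further is needed.
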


\begin{lema}\label{Met1}
Let $M$ be an irreducible left $FS_{\cn}$-module with character $\chi(M)=\chi_{\langle\mu\rangle}$, $\langle\mu\rangle\vdash \cn$. Then $M$ can be generated as a left $FS_{\cn}$-module by an element of the form $e_{T_{\langle\mu\rangle}}f$ for some $f\in M$ and some Young multitableau  $T_{\langle\mu\rangle}$ of the shape $\langle\mu\rangle$. Moreover, for any Young multitableau $T^*_{\langle\mu\rangle}$ of shape $\langle\mu\rangle$, there exists $f'\in M$ such that $M=FS_{\cn}e_{T^*_{\langle\mu\rangle}}f'$.
\end{lema}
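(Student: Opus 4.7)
The plan is to exploit the module isomorphism $M \cong FS_{\cn}e_{T_{\langle\mu\rangle}}$, which follows from Lemma \ref{etmu} together with the hypothesis that both are irreducible $FS_{\cn}$-modules affording the character $\chi_{\langle\mu\rangle}$ (recall $FS_{\cn}$ is semisimple since $\mathrm{char}(F)=0$, so irreducibles are determined up to isomorphism by their characters). The idea is to transport the canonical generator of the left ideal across this isomorphism to produce an element $f\in M$ on which $e_{T_{\langle\mu\rangle}}$ acts nontrivially; irreducibility of $M$ will then force $FS_{\cn}(e_{T_{\langle\mu\rangle}}f)=M$, which is exactly the first assertion.

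Concretely, I would fix an $FS_{\cn}$-linear isomorphism $\varphi\colon FS_{\cn}e_{T_{\langle\mu\rangle}} \to M$ and set $f := \varphi(e_{T_{\langle\mu\rangle}})$. Using that each factor $e_{T_{\mu(i)}}$ is a minimal essential idempotent of $FS_{n_i}$ satisfying $e_{T_{\mu(i)}}^{2} = \alpha_i\, e_{T_{\mu(i)}}$ with $\alpha_i = n_i!/d_{\mu(i)}\neq 0$, the tensor decomposition stated in Lemma \ref{etmu} gives $e_{T_{\langle\mu\rangle}}^{2} = \alpha\, e_{T_{\langle\mu\rangle}}$ with $\alpha = \alpha_1\alpha_2\alpha_3\alpha_4\neq 0$. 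By $FS_{\cn}$-linearity of $\varphi$, we obtain $e_{T_{\langle\mu\rangle}}f = \varphi(e_{T_{\langle\mu\rangle}}^{2}) = \alpha f \neq 0$, so $FS_{\cn}(e_{T_{\langle\mu\rangle}}f)$ is a nonzero submodule of the simple module $M$ and must therefore coincide with $M$.

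For the ``moreover'' clause, given any Young multitableau $T^{*}_{\langle\mu\rangle}$ of shape $\langle\mu\rangle$, Lemma \ref{etmu} supplies $\langle\sigma\rangle\in S_{\cn}$ with $e_{T^{*}_{\langle\mu\rangle}} = \langle\sigma\rangle e_{T_{\langle\mu\rangle}}\langle\sigma\rangle^{-1}$. Taking $f' := \langle\sigma\rangle f \in M$, one computes $e_{T^{*}_{\langle\mu\rangle}}f' = \langle\sigma\rangle e_{T_{\langle\mu\rangle}}\langle\sigma\rangle^{-1}\langle\sigma\rangle f = \langle\sigma\rangle(\alpha f) \neq 0$, and irreducibility of $M$ again delivers $FS_{\cn}e_{T^{*}_{\langle\mu\rangle}}f' = M$. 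The only delicate point in the whole argument is the verification that $e_{T_{\langle\mu\rangle}}$ is genuinely quasi-idempotent with nonzero scalar $\alpha$ in the multi-symmetric setting; but this is immediate from Lemma \ref{etmu} and the corresponding classical fact for each $S_{n_i}$, transported through the algebra identification $FS_{\cn}\cong FS_{n_1}\otimes_F FS_{n_2}\otimes_F FS_{n_3}\otimes_F FS_{n_4}$ already fixed in the paper.
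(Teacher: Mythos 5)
Your proof is correct and follows the standard argument that the paper itself does not spell out (Lemma \ref{Met1} is stated with only a reference to \cite{RV} and tensor-product properties): transporting the quasi-idempotent $e_{T_{\langle\mu\rangle}}$, with $e_{T_{\langle\mu\rangle}}^{2}=\alpha\, e_{T_{\langle\mu\rangle}}$, $\alpha=\prod_i n_i!/d_{\mu(i)}\neq 0$, through an isomorphism $FS_{\cn}e_{T_{\langle\mu\rangle}}\cong M$ and then conjugating by $\langle\sigma\rangle$ for the ``moreover'' clause is exactly the intended route. All the ingredients you invoke (semisimplicity of $FS_{\cn}$, Lemma \ref{etmu}, and the nonvanishing of the quasi-idempotency scalar in characteristic zero) are available, so there is no gap.
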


Denote by 
\begin{equation*}
\mathcal{T}_{\langle \mu \rangle} =\{ T_{\langle \mu \rangle} \mbox{ is a Young multitableau of the shape } \ \langle \mu \rangle \}
\end{equation*}
the set of all Young multitableaux of the shape $\langle \mu \rangle$ and 
\begin{eqnarray*}
\mathcal{ST}_{\langle \mu \rangle} =\{ T_{\langle \mu \rangle} \in \mathcal{T}_{\langle \mu \rangle} \  | \  T_{\langle \mu \rangle} \mbox{ is standard} \}
\end{eqnarray*}
the set of all standard Young multitableaux of the shape $\langle \mu \rangle.$ We assume that a Young multitableau $T_{\langle \mu \rangle}=(T_{\mu(1)},T_{\mu(2)},T_{\mu(3)},T_{\mu(4)})$ is standard if $T_{\mu(i)}$ is standard for all $i=1, 2, 3, 4.$

The following proposition is also based on the properties of the tensor product of algebras, classic structure theorems for semisimple finite dimensional associative algebras and classic results of representation theory.

\begin{propo}\label{bilat-id}
Let $F$ be a field of characteristic zero. Then, for any 
$\cn=(n_1,n_2,n_3,n_4)$ ($n_i \geq 0$),  
\begin{equation} \label{dirsumSn}
FS_{\cn}=\bigoplus_{\langle\mu\rangle \vdash \cn} I_{\langle\mu\rangle}
\end{equation}
is the direct sum of minimal two-sided ideals $I_{\langle\mu\rangle}$ of the algebra $FS_{\cn}.$ The minimal two-sided ideal $I_{\langle\mu\rangle}$ corresponding to the multipartition $\langle\mu\rangle \vdash \cn$ is equal to
\begin{eqnarray*}
I_{\langle\mu\rangle}=FS_{\cn} \mathcal{E}_{\langle\mu\rangle}=
\sum_{T_{\langle\mu\rangle} \in \mathcal{ST}_{\langle \mu \rangle}} FS_{\cn} e_{T_{\langle\mu\rangle}}.
\end{eqnarray*}
Moreover, the generator of $I_{\langle\mu\rangle}$, 
$\mathcal{E}_{\langle \mu \rangle}=\mathcal{E}_{\mu(1)} \otimes \mathcal{E}_{\mu(2)} \otimes \mathcal{E}_{\mu(3)} \otimes \mathcal{E}_{\mu(4)}  \in FS_{\cn}$,
is an essential central idempotent of $FS_{\cn},$
where
$$
\mathcal{E}_{\mu(i)} = \displaystyle \sum_{\sigma_i\in S_{n_i}} \chi_{\mu(i)}(\sigma_i) \  \sigma_i \ \in F{S_{n_i}}, \ \ \  i=1,2,3,4.
$$
\end{propo}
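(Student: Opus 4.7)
The plan is to reduce the claim to the classical Wedderburn decomposition of each single symmetric group algebra $FS_{n_i}$ and then lift the result to the tensor product via the identification $FS_{\langle n\rangle}\cong FS_{n_1}\otimes_F FS_{n_2}\otimes_F FS_{n_3}\otimes_F FS_{n_4}$ already mentioned in the paper. Concretely, I would first recall the well-known fact that for each $i$ one has $FS_{n_i}=\bigoplus_{\mu(i)\vdash n_i}I_{\mu(i)}$, where $I_{\mu(i)}\cong M_{d_{\mu(i)}}(F)$ is the minimal two-sided ideal corresponding to $\chi_{\mu(i)}$, generated by the central idempotent $e_{\chi_{\mu(i)}}=\frac{d_{\mu(i)}}{n_i!}\sum_{\sigma\in S_{n_i}}\chi_{\mu(i)}(\sigma)\sigma$ (using that the characters of $S_{n_i}$ are real-valued). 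Distributing the tensor product over the direct sum then yields
\begin{equation*}
FS_{\langle n\rangle}\cong\bigotimes_{i=1}^{4}\Big(\bigoplus_{\mu(i)\vdash n_i}I_{\mu(i)}\Big)=\bigoplus_{\langle\mu\rangle\vdash\langle n\rangle}\bigl(I_{\mu(1)}\otimes I_{\mu(2)}\otimes I_{\mu(3)}\otimes I_{\mu(4)}\bigr),
\end{equation*}
so that, setting $I_{\langle\mu\rangle}:=\bigotimes_i I_{\mu(i)}$, one obtains (\ref{dirsumSn}).

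Next I would check that each $I_{\langle\mu\rangle}$ is a \emph{minimal} two-sided ideal of $FS_{\langle n\rangle}$. This follows because $I_{\langle\mu\rangle}\cong \bigotimes_i M_{d_{\mu(i)}}(F)\cong M_{d_{\langle\mu\rangle}}(F)$ is a simple $F$-algebra, and simple subalgebras of a semisimple algebra are exactly the minimal two-sided ideals. Simultaneously, its character as a left regular module is $d_{\langle\mu\rangle}\chi_{\langle\mu\rangle}$, which identifies it with the $\chi_{\langle\mu\rangle}$-isotypic component of $FS_{\langle n\rangle}$.

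For the explicit idempotent, I would just tensor the single-group formulas:
\begin{equation*}
\mathcal{E}_{\langle\mu\rangle}=\mathcal{E}_{\mu(1)}\otimes\mathcal{E}_{\mu(2)}\otimes\mathcal{E}_{\mu(3)}\otimes\mathcal{E}_{\mu(4)}=\sum_{\langle\sigma\rangle\in S_{\langle n\rangle}}\chi_{\mu(1)}(\sigma_1)\chi_{\mu(2)}(\sigma_2)\chi_{\mu(3)}(\sigma_3)\chi_{\mu(4)}(\sigma_4)\,\langle\sigma\rangle,
\end{equation*}
and verify it is central (each $\mathcal{E}_{\mu(i)}$ is central in its factor) and essential idempotent: from $\mathcal{E}_{\mu(i)}^{2}=\tfrac{n_i!}{d_{\mu(i)}}\mathcal{E}_{\mu(i)}$ one obtains $\mathcal{E}_{\langle\mu\rangle}^{2}=\bigl(\prod_i\tfrac{n_i!}{d_{\mu(i)}}\bigr)\mathcal{E}_{\langle\mu\rangle}$, so a nonzero scalar multiple of $\mathcal{E}_{\langle\mu\rangle}$ is the standard central primitive idempotent of $I_{\langle\mu\rangle}$; hence $I_{\langle\mu\rangle}=FS_{\langle n\rangle}\mathcal{E}_{\langle\mu\rangle}$.

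Finally, to identify $I_{\langle\mu\rangle}$ with $\sum_{T_{\langle\mu\rangle}\in\mathcal{ST}_{\langle\mu\rangle}}FS_{\langle n\rangle}e_{T_{\langle\mu\rangle}}$, I would invoke the corresponding classical statement for each $S_{n_i}$: $I_{\mu(i)}=\sum_{T_{\mu(i)}\in\mathcal{ST}_{\mu(i)}}FS_{n_i}e_{T_{\mu(i)}}$ (see \cite{RV,sagan}). Taking tensor products and using Lemma \ref{etmu} (which gives $e_{T_{\langle\mu\rangle}}=\bigotimes_i e_{T_{\mu(i)}}$ as a minimal essential idempotent generating a minimal left ideal with character $\chi_{\langle\mu\rangle}$) shows that the sum in question contains every generator of $I_{\langle\mu\rangle}$, hence equals it. The only delicate point—and where I expect the main (still mild) obstacle—is justifying that tensoring the Wedderburn decompositions preserves minimality of the summands; this reduces to the elementary but crucial fact that $M_{p}(F)\otimes_F M_{q}(F)\cong M_{pq}(F)$ is simple, which is exactly why a tensor product of irreducible characters of the factors remains an irreducible character of $S_{\langle n\rangle}$.
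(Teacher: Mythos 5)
Your proposal is correct, and it follows the same basic strategy as the paper: reduce everything to the classical Wedderburn/Young theory of the individual $FS_{n_i}$ and lift through the identification $FS_{\cn}\cong FS_{n_1}\otimes_F\cdots\otimes_F FS_{n_4}$. The one place where the two arguments genuinely diverge is in how minimality of the blocks, and the identity $I_{\langle\mu\rangle}=FS_{\cn}\mathcal{E}_{\langle\mu\rangle}$, are obtained. The paper works top-down: it takes the abstract Wedderburn decomposition of $FS_{\cn}$ as given, uses the fact that a minimal two-sided ideal of a semisimple algebra is the sum of all minimal left ideals in one isomorphism class (together with Lemmas \ref{etmu} and \ref{Met1}) to identify $I_{\langle\mu\rangle}=\sum_{T_{\langle\mu\rangle}\in\mathcal{T}_{\langle\mu\rangle}}FS_{\cn}e_{T_{\langle\mu\rangle}}$, and then squeezes: $FS_{\cn}\mathcal{E}_{\langle\mu\rangle}$ is a nonzero two-sided ideal contained in $I_{\langle\mu\rangle}$, so minimality forces equality, and the intermediate sum over standard multitableaux is caught in between. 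You work bottom-up: distribute the tensor product over the four direct sums and prove minimality of each summand directly from $M_p(F)\otimes_F M_q(F)\cong M_{pq}(F)$. Your route is somewhat more self-contained (it does not need the squeeze, nor the characterization of two-sided ideals via isomorphism classes of left ideals), at the cost of having to know that every $I_{\mu(i)}$ is a full matrix algebra over $F$ itself, i.e.\ that $F$ of characteristic zero is a splitting field for $S_{n_i}$ --- true, but worth stating. One phrasing to fix: ``simple subalgebras of a semisimple algebra are exactly the minimal two-sided ideals'' is not literally correct (a simple subalgebra need not be an ideal); what you mean, and what you actually use, is that a two-sided ideal which is simple as an algebra is a minimal two-sided ideal.
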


\begin{proof}
Formula (\ref{dirsumSn}) is the classic Wedderburn decomposition of a finite dimensional semisimple algebra $FS_{\cn}.$ This is also a well-known result of the classic structure theory of finite dimensional semisimple associative algebras that any minimal two-sided ideal $\tilde{I}$ of $FS_{\cn}$ is the sum of all minimal left ideals of $FS_{\cn},$ which are isomorphic as left $FS_{\cn}$-modules (see, e.g., \cite{jacobs}). Then Lemmas \ref{etmu}, \ref{Met1} and classic results of representation theory of symmetric groups (\cite{RV,jkerb}) immediately imply that 
$$\tilde{I}=I_{\langle\mu\rangle}=
\sum_{T_{\langle\mu\rangle} \in \mathcal{T}_{\langle \mu \rangle} } FS_{\cn} e_{T_{\langle\mu\rangle}}$$ for some multipartition $\langle\mu\rangle \vdash \cn.$ From another side, any multipartition $\langle\mu\rangle \vdash \cn$ corresponds to some minimal two-sided ideal $I_{\langle\mu\rangle}$ of $FS_{\cn}.$

Also it is well known from the $S_n$-representation theory via Young tableaux 
(see e.g. \cite{RV,jkerb}) that the element $\mathcal{E}_{\mu(i)}$ is an essential central idempotent of $FS_{n_i}$, $I_{\mu(i)}=FS_{n_i} \mathcal{E}_{\mu(i)}$ is the minimal two-sided ideal of $FS_{n_i}$ corresponding to the Young diagram of the shape $\mu(i)$ and 
$$I_{\mu(i)}=FS_{n_i} \mathcal{E}_{\mu(i)}=\bigoplus_{T_{\mu(i)} \text{ is  standard}} FS_{n_i} e_{T_{\mu(i)}}$$ (see, for instance, Proposition 2.2.14 of \cite{RV} or \cite{jkerb}), for any $i=1,2,3,4$. 

These facts immediately imply that the element $\mathcal{E}_{\langle \mu \rangle}$ is also an essential central idempotent of $FS_{\langle n \rangle}$ and 
\begin{eqnarray} \label{st}
&&FS_{\cn} \mathcal{E}_{\langle \mu \rangle} = (FS_{n_1} \mathcal{E}_{\mu(1)}) \otimes (FS_{n_2} \mathcal{E}_{\mu(2)}) \otimes (FS_{n_3} \mathcal{E}_{\mu(3)}) \otimes (FS_{n_4} \mathcal{E}_{\mu(4)})= \nonumber \\
&&(\bigoplus_{T_{\mu(1)}  \text{ is }  \atop \text{ standard}} FS_{n_1} e_{T_{\mu(1)}}) \otimes (\bigoplus_{T_{\mu(2)} \text{ is }  \atop \text{ standard}} FS_{n_2} e_{T_{\mu(2)}}) \otimes (\bigoplus_{T_{\mu(3)} \text{ is }  \atop \text{ standard}} FS_{n_3} e_{T_{\mu(3)}}) \otimes (\bigoplus_{T_{\mu(4)} \text{ is }  \atop \text{ standard} } FS_{n_4} e_{T_{\mu(4)}}) \subseteq \nonumber \\
&&\sum_{(T_{\mu(1)},\dots,T_{\mu(4)}) 
\atop \text{ is  standard}}
(FS_{n_1} e_{T_{\mu(1)}}) \otimes (FS_{n_2} e_{T_{\mu(2)}}) \otimes
(FS_{n_3} e_{T_{\mu(3)}}) \otimes (FS_{n_4} e_{T_{\mu(4)}}) =  \nonumber \\
&&\sum_{T_{\langle\mu\rangle} 
 \in \mathcal{ST}_{\langle \mu \rangle}} FS_{\cn} e_{T_{\langle\mu\rangle}} \subseteq \sum_{T_{\langle\mu\rangle} 
 \in \mathcal{T}_{\langle \mu \rangle}} FS_{\cn} e_{T_{\langle\mu\rangle}}=I_{\langle\mu\rangle}.
\end{eqnarray}
Hence, we have $FS_{\cn} \mathcal{E}_{\langle \mu \rangle}$ is a non-zero two-sided ideal of $FS_{\cn}$ ($FS_{\cn} \mathcal{E}_{\langle \mu \rangle} \ni \mathcal{E}_{\langle \mu \rangle} \neq 0$) and $FS_{\cn} \mathcal{E}_{\langle \mu \rangle} \subseteq I_{\langle\mu\rangle}.$ Since $I_{\langle\mu\rangle}$ is a minimal two-sided ideal of $FS_{\cn},$ one has $FS_{\cn} \mathcal{E}_{\langle \mu \rangle} = I_{\langle\mu\rangle}.$ Moreover, from (\ref{st}), one has 
$$
I_{\langle\mu\rangle}=FS_{\cn} \mathcal{E}_{\langle \mu \rangle} \subseteq \sum_{T_{\langle\mu\rangle} 
 \in \mathcal{ST}_{\langle \mu \rangle}} FS_{\cn} e_{T_{\langle\mu\rangle}} \subseteq  I_{\langle\mu\rangle}.
$$
Hence, we have $I_{\langle\mu\rangle}=\sum_{T_{\langle\mu\rangle} 
\in \mathcal{ST}_{\langle \mu \rangle}} FS_{\cn} e_{T_{\langle\mu\rangle}}.$
\end{proof}

Besides this description of simple two-sided ideals of $FS_{\cn}$, we can also prove the following lemma.

\begin{lema}\label{Decomp}
Let $M$ be a left $FS_{\cn}$-module, $N \subseteq M$ its submodule, and $f \in M$ any element of $M.$ Then, $f \in N$ if, and only if, $e_{T_{\langle\mu\rangle}}f \in N$ for all multipartitions $\langle \mu \rangle \vdash \cn$ and all Young multitableaux  
$T_{\langle \mu \rangle}$ of shape $\langle \mu \rangle$.
\end{lema}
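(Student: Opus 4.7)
The forward direction ($\Rightarrow$) is immediate: since $N$ is an $FS_{\cn}$-submodule and each $e_{T_{\langle\mu\rangle}}$ lies in $FS_{\cn}$, we have $e_{T_{\langle\mu\rangle}}f\in N$ for every multipartition and every Young multitableau as soon as $f\in N$.

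For the reverse direction, the plan is to exploit the two-sided ideal decomposition of $FS_{\cn}$ furnished by Proposition \ref{bilat-id}. Assume $e_{T_{\langle\mu\rangle}}f\in N$ for every $\langle\mu\rangle\vdash\cn$ and every $T_{\langle\mu\rangle}$ of shape $\langle\mu\rangle$. Fix $\langle\mu\rangle$. Because $N$ is a left $FS_{\cn}$-submodule, for any $g\in FS_{\cn}$ and any $T_{\langle\mu\rangle}$ we get
$$
(g\,e_{T_{\langle\mu\rangle}})\,f \;=\; g\,(e_{T_{\langle\mu\rangle}}f)\;\in\; N.
$$
Summing over all standard (or indeed all) Young multitableaux of shape $\langle\mu\rangle$ and over all coefficients in $FS_{\cn}$, and using the description
$$
I_{\langle\mu\rangle}\;=\;\sum_{T_{\langle\mu\rangle}\in \mathcal{ST}_{\langle\mu\rangle}} FS_{\cn}\,e_{T_{\langle\mu\rangle}}
$$
from Proposition \ref{bilat-id}, we conclude $I_{\langle\mu\rangle}\,f\subseteq N$ for every $\langle\mu\rangle\vdash\cn$.

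Now I invoke the Wedderburn decomposition $FS_{\cn}=\bigoplus_{\langle\mu\rangle\vdash\cn} I_{\langle\mu\rangle}$. Since $1\in FS_{\cn}$, one can write $1=\sum_{\langle\mu\rangle\vdash\cn}\varepsilon_{\langle\mu\rangle}$ with $\varepsilon_{\langle\mu\rangle}\in I_{\langle\mu\rangle}$ (these are the unit idempotents of the simple blocks, obtainable after rescaling the essential central idempotents $\mathcal{E}_{\langle\mu\rangle}$). Then
$$
f \;=\; 1\cdot f \;=\; \sum_{\langle\mu\rangle\vdash\cn}\varepsilon_{\langle\mu\rangle}\,f \;\in\; \sum_{\langle\mu\rangle\vdash\cn} I_{\langle\mu\rangle}\,f \;\subseteq\; N,
$$
which gives the desired conclusion.

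There is no serious obstacle: the only point that requires attention is the passage from the hypothesis ``$e_{T_{\langle\mu\rangle}}f\in N$'' to ``$I_{\langle\mu\rangle}f\subseteq N$'', which is a direct application of the generation of $I_{\langle\mu\rangle}$ by the $e_{T_{\langle\mu\rangle}}$'s established in Proposition \ref{bilat-id}. Everything else is the standard block decomposition of a semisimple finite-dimensional algebra.
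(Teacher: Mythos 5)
Your proof is correct and follows essentially the same route as the paper: the forward direction is the submodule property, and the reverse direction uses Proposition \ref{bilat-id} to write $FS_{\cn}$ (hence the identity element) in terms of the left ideals $FS_{\cn}e_{T_{\langle\mu\rangle}}$, so that $f=1\cdot f$ lands in $N$. The detour through the central block idempotents $\varepsilon_{\langle\mu\rangle}$ is harmless but unnecessary; the paper simply observes $f\in FS_{\cn}f=\sum FS_{\cn}e_{T_{\langle\mu\rangle}}f\subseteq N$ directly.
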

\begin{proof}
$N$ is a left $FS_{\cn}$-module. Thus, if $f \in N$, then $e_{T_{\langle\mu\rangle}}f \in N$ for all $\langle \mu \rangle \vdash \cn,$ and all Young multitableaux  
$T_{\langle\mu\rangle}$ of shape $\langle\mu\rangle$ obviously.

Conversely, we have that
$$FS_{\cn}=\sum_{{
\langle\mu\rangle\vdash \cn} \atop
{T_{\langle\mu\rangle} \in \mathcal{ST}_{\langle \mu \rangle}}} FS_{\cn} e_{T_{\langle\mu\rangle}}$$
from Proposition \ref{bilat-id}.
Hence, for an element $f \in M$ we have 
$$f \in FS_{\cn} f=\sum_{{
\langle\mu\rangle\vdash \cn} \atop
{T_{\langle\mu\rangle} \in \mathcal{ST}_{\langle \mu \rangle}} } FS_{\cn} e_{T_{\langle\mu\rangle}} f.$$
Thus, if for all $\langle \mu \rangle \vdash \cn$ and Young multitableaux  
$T_{\langle\mu\rangle}$ of shape $\langle\mu\rangle$, we have $e_{T_{\langle\mu\rangle}}f \in N$, then $f \in FS_{\cn} f \subseteq N$ also holds.
\end{proof}

Now, consider $P_{\cn}$ the space of all multilinear $\#$-superpolynomials in which the first $n_1$ variables are even symmetric, the next $n_2$ variables are even skew, the next $n_3$ variables are odd symmetric and the last $n_4$ variables are odd skew. 

Note that $ P_{\cn} $ has a natural structure of  left $FS_{\cn} $-module, defining the following action of $ S_{\cn} $ on $ P_{\cn} $: given $ f \in P_{\cn} $ and $\langle\sigma\rangle= (\sigma_1, \sigma_2, \sigma_3, \sigma_4) \in S_{\cn} $, one has
\begin{eqnarray*}
\langle\sigma\rangle f=(\sigma_1, \sigma_2, \sigma_3, \sigma_4)f(y_{0,1}, \cdots, y_{0,n_1}, z_{0,1},\cdots, z_{0,n_2}, y_{1,1}, \cdots, y_{1,n_3}, z_{1,1}, \cdots, z_{1,n_4})\\
=f(y_{0,\sigma_1(1)}, \cdots, y_{0,\sigma_1(n_1)}, z_{0,\sigma_2 (1)},\cdots, z_{0,\sigma_2(n_2)}, y_{1,\sigma_3(1)}, \cdots, y_{1,\sigma_3(n_3)}, z_{1,\sigma_4(1)}, \cdots, z_{1,\sigma_4(n_4)}).
\end{eqnarray*}

It is possible to show that $P_{n}^{grs}\cong  \displaystyle\bigoplus_{\cn}
{{n}\choose{\cn}} P_{\cn}$, 
where ${{n}\choose{\cn}}={{n}\choose{n_1,n_2,n_3,n_4}}$ is the multinomial coefficient (see \cite{GID}), i.e. $P_{n}^{grs}$ is the sum of ${n}\choose{\cn}$ its subspaces, which are copies of the space $P_{\cn}$, obtained by renaming variables of elements of $P_{\langle n\rangle}$, more precisely, choosing numbers of any type of $n_i$ variables in the set $\{1,2,...,n\}$ in all possible ways. It is clear that it is enough to study $P_{\langle n\rangle}$ to understand the behavior of multilinear $\#$-superidentities. 

Considering the left $FS_{\cn}$-module 
$$
P_{\cn}(A):=\frac{P_{\cn}}{P_{\cn} \cap Id_2^{\#}(A)}
$$  
and $c_{\cn}(A):= \dim_{F} P_{\cn}(A)$, we have that (see \cite{GID} for a superinvolution and \cite{ARA} for a graded involution)
$$
c_{n}^{grs}(A):= \displaystyle\sum_{n_{1}+\cdots+n_{4}=n} {{n}\choose{\cn}}c_{\cn}(A).
$$

According to (\ref{caracter}), consider $\langle n\rangle $-th character 
$\chi_{\cn} (A) $ (the $\langle n\rangle$-th cocharacter of $\#$-superiden\-ti\-ties of $A$ or simply $\langle n\rangle $-th $\#$-cocharacter of $A$) associated to the left $FS_{\langle n\rangle}$-module $ P_{\cn} (A) $ and its decomposition given by
\begin{equation}\label{caracter2}
\chi_{\cn}(A) = \dis\sum_{\langle\lambda\rangle\vdash \cn}m_{\langle\lambda\rangle}\chi_{\langle\lambda\rangle}.
\end{equation}

\begin{propo}\label{propetm}
Let $A$ be a $\#$-superalgebra with $\langle n\rangle$-th $\#$-cocharacter $\chi_{\cn}(A)$ given in (\ref{caracter2}). If $A$ is a $PI$-algebra, then the multiplicity $m_{\langle \mu\rangle}$ is equal to zero, for a multipartition $\langle \mu\rangle\vdash \cn$, if, and only if, for any Young multitableau $T_{\langle \mu\rangle}$ of shape $\langle\mu\rangle$ and for any multilinear polynomial $f=f(\mathcal{Y}_{0,n_1},\mathcal{Z}_{0,n_2},\mathcal{Y}_{1,n_3},\mathcal{Z}_{1,n_4})\in P_{\cn}$, the algebra $A$ satisfies the $\#$-superidentity $e_{T_{\langle\mu\rangle}}f\equiv 0$.
\end{propo}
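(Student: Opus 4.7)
The plan is to reformulate the condition $m_{\langle\mu\rangle}=0$ in terms of the annihilation of $P_{\cn}(A):=P_{\cn}/(P_{\cn}\cap Id_2^{\#}(A))$ by the two-sided ideal $I_{\langle\mu\rangle}$ described in Proposition \ref{bilat-id}, and then to express that annihilation by means of the essential idempotents $e_{T_{\langle\mu\rangle}}$. Since $FS_{\cn}$ is semisimple and $P_{\cn}(A)$ is a completely reducible left $FS_{\cn}$-module whose character decomposes as in (\ref{caracter2}), the multiplicity $m_{\langle\mu\rangle}$ equals the number of irreducible constituents of $P_{\cn}(A)$ with character $\chi_{\langle\mu\rangle}$. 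Hence $m_{\langle\mu\rangle}=0$ if, and only if, the $\langle\mu\rangle$-isotypic component of $P_{\cn}(A)$ vanishes, which is equivalent to $I_{\langle\mu\rangle}\cdot P_{\cn}(A)=0$.

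For the direction ($\Rightarrow$), I assume $m_{\langle\mu\rangle}=0$ and fix any Young multitableau $T_{\langle\mu\rangle}$ of shape $\langle\mu\rangle$. By Lemma \ref{etmu}, $FS_{\cn}e_{T_{\langle\mu\rangle}}$ is a minimal left ideal with character $\chi_{\langle\mu\rangle}$, so it is contained in $I_{\langle\mu\rangle}$; in particular $e_{T_{\langle\mu\rangle}}\in I_{\langle\mu\rangle}$. Therefore, for every $f\in P_{\cn}$, the class of $e_{T_{\langle\mu\rangle}}f$ in $P_{\cn}(A)$ lies in $I_{\langle\mu\rangle}\cdot P_{\cn}(A)=0$, so $e_{T_{\langle\mu\rangle}}f\in P_{\cn}\cap Id_2^{\#}(A)$, i.e. $e_{T_{\langle\mu\rangle}}f\equiv 0$ in $A$.

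For the direction ($\Leftarrow$), I argue by contrapositive. Suppose $m_{\langle\mu\rangle}\neq 0$. By complete reducibility of $P_{\cn}(A)$, there exists an irreducible $FS_{\cn}$-submodule $M\subseteq P_{\cn}(A)$ with character $\chi_{\langle\mu\rangle}$. Fix any Young multitableau $T_{\langle\mu\rangle}$ of shape $\langle\mu\rangle$. By Lemma \ref{Met1}, there exists $\bar{g}\in M$ with $M=FS_{\cn}e_{T_{\langle\mu\rangle}}\bar{g}$, and since $M\neq 0$ we must have $e_{T_{\langle\mu\rangle}}\bar{g}\neq 0$ in $P_{\cn}(A)$. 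Choosing a lift $g\in P_{\cn}$ of $\bar{g}$, the action of $FS_{\cn}$ on $P_{\cn}(A)$ being induced from that on $P_{\cn}$ gives that $e_{T_{\langle\mu\rangle}}g\notin P_{\cn}\cap Id_2^{\#}(A)$, that is, $e_{T_{\langle\mu\rangle}}g\not\equiv 0$ on $A$, contradicting the hypothesis.

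There is no serious obstacle: the whole argument is built directly on top of Proposition \ref{bilat-id} and Lemmas \ref{etmu} and \ref{Met1}. The only small point that has to be handled carefully is that the statement allows \emph{any} Young multitableau, not only standard ones; this is absorbed by the conjugacy assertion in Lemma \ref{etmu}, which guarantees that every $e_{T_{\langle\mu\rangle}}$ of shape $\langle\mu\rangle$ lies in $I_{\langle\mu\rangle}$ and that Lemma \ref{Met1} may be applied with an arbitrary (not necessarily standard) multitableau. The hypothesis that $A$ is $PI$ enters only to ensure, via Corollary \ref{corregv}, that $P_{\cn}(A)$ is finite-dimensional so that the cocharacter (\ref{caracter2}) is well-defined.
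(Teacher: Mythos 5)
Your proof is correct. It rests on the same foundations as the paper's argument (semisimplicity of $FS_{\cn}$, Lemma \ref{etmu} and Lemma \ref{Met1}), but the forward direction is packaged differently: the paper splits $P_{\cn}=(P_{\cn}\cap Id_2^{\#}(A))\oplus J$ by Maschke's theorem and shows $e_{T_{\langle\mu\rangle}}J=0$ via a Schur-type argument (the map $\alpha\mapsto\alpha h$ would embed a copy of the irreducible module $FS_{\cn}e_{T_{\langle\mu\rangle}}$ into $J$), whereas you work directly in the quotient $P_{\cn}(A)$ and invoke the two-sided ideal decomposition of Proposition \ref{bilat-id}, observing that $e_{T_{\langle\mu\rangle}}\in I_{\langle\mu\rangle}$ and that $I_{\langle\mu\rangle}\cdot P_{\cn}(A)$ is exactly the $\langle\mu\rangle$-isotypic component. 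Your route is marginally cleaner in that it avoids choosing a complement and lifting is only needed in the converse direction; the paper's route is more self-contained in that it does not lean on the identification of $I_{\langle\mu\rangle}V$ with the isotypic component. The converse directions are essentially identical. One small inaccuracy: the claim that the $PI$ hypothesis is needed (via Corollary \ref{corregv}) to make $P_{\cn}(A)$ finite-dimensional is off the mark, since $P_{\cn}$ itself is already finite-dimensional; in fact neither your proof nor the paper's actually uses the $PI$ assumption here. This does not affect the validity of the argument.
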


\begin{proof}
By Maschke's Theorem, there exists $J$ the submodule of $P_{\cn}$ such that $P_{\cn}=(P_{\cn}\cap Id_2^{\#}(A))\oplus J.$ By the second isomorphism theorem, $J$ is isomorphic to $P_{\cn}(A)$ and, therefore, $\chi_J=\chi_{\cn}(A)$. If $m_{\langle\mu\rangle}=0$, there is no submodule of $J$ isomorphic to $FS_{\cn}e_{T_{\langle\mu\rangle}}$, for any multitableau $T_{\langle\mu\rangle}$ of shape $\langle\mu\rangle$, since, by Lemma \ref{etmu}, the element $FS_{\cn}e_{T_{\langle\mu\rangle}}$ is irreducible. Suppose that $e_{T_{\langle\mu\rangle}}\cdot h\neq 0$ for some $h\in J$, and consider the application $\varphi: FS_{\cn}e_{T_{\langle\mu\rangle}}\to FS_{\cn}e_{T_{\langle\mu\rangle}}\cdot h$ given by $\varphi(\alpha)=\alpha\cdot h$, $\alpha\in FS_{\cn}e_{T_{\langle\mu\rangle}}$. Note that $\varphi$ is an isomorphism of $F S_{\cn}$-modules. This gives us an absurd statement, because $FS_{\cn}e_{T_{\langle\mu\rangle}}\cdot h\subset J$. Hence, $e_{T_{\langle\mu\rangle}}\cdot h=0$ for all $h\in J$ necessarily. Recalling that $P_{\cn}=(P_{\cn}\cap Id_2^{\#}(A))\oplus J$, every element $f\in P_{\cn}$ may be decomposed as $f=g+h$, where $g\in P_{\cn} \cap Id_2^{\#}(A)$ and $h\in J$, such that
$$
e_{T_{\langle\mu\rangle}} f = e_{T_{\langle\mu\rangle}}g + e_{T_{\langle\mu\rangle}} h = e_{T_{\langle\mu\rangle}} g \in P_{\cn}\cap Id_2^{\#}(A). 
$$
Thus, $e_{T_{\langle\mu\rangle}} f\in Id_2^{\#}(A)$ for all $f\in P_{\cn}$. Conversely, suppose that $m_{\langle\mu\rangle}\neq 0$. Then, $J$ possesses a minimal submodule $M$ satisfying $\chi(M)=\chi_{\langle\mu\rangle}$. By Lemma \ref{Met1}, there exists $f\in M$ such that $M=FS_{\cn}e_{T_{\langle\mu\rangle}}f$ for some multitableau $T_{\langle\mu\rangle}$ associated to the multipartition $\langle\mu\rangle$. In view of $M\neq 0$, it follows that $e_{T_{\langle\mu\rangle}}f\neq 0$, and consequently $e_{T_{\langle\mu\rangle}}f\notin (P_{\cn}\cap Id_2^{\#}(A))\cap J=\{0\}$. Since $e_{T_{\langle\mu\rangle}}f\in J$, we conclude that $e_{T_{\langle\mu\rangle}}f\notin Id_2^{\#}(A)$.
\end{proof}

Let $m$ be a positive integer. Consider the square partition $\nu=(m^m)\vdash m^2$ with $T_\nu$ its respective standard tableau. For simplicity, we adopt the following notation for the next remark. We will denote $\mathcal{S}=FS_1\otimes \cdots \otimes F S_{m^2}\otimes \cdots \otimes FS_1$, where $S_1$ is the trivial group, and $\tilde{e}_{T_\nu}=Id_1\otimes\cdots \otimes e_{T_{\nu}}\otimes \cdots \otimes Id_4\in \mathcal{S}$, where $Id_i\in S_{1}$ is the identical permutation.

\begin{obs}\label{obspropS}
Consider $M$ the left $\mathcal{S}$-module generated by the element $\tilde{e}_{T_\nu}$. Then, $M$ is an irreducible $\mathcal{S}$-module (see e.g. Theorem 1.11.3 from \cite{sagan}, which is valid also for any field in the case of symmetric groups). Moreover, for any $\langle t\rangle=(t_1,t_2,t_3,t_4)$ with $t=t_1+t_2+t_3+t_4$ and $t_{i_0}\geq m^2$ for some $i_0\in\{1,2,3,4\}$ and for some $\#$-superalgebra $A$, if $w\in P_{\langle t\rangle}$ is a $\#$-supermonomial such that
$
\tilde{e}_{T_{\nu}}w\not \equiv 0
$
in $A$, then the left $\mathcal{S}$-module $M_0$ generated by the element $\tilde{e}_{T_{\nu}}w$ is irreducible. In fact, note that $M_0=M w$. Let $0\neq N_0\subseteq M_0$ be an $\mathcal{S}$-submodule of $M_0$. We have $N_0=N w$, where $0\neq N\subseteq M$ is an $\mathcal{S}$-submodule of $M$. Since $M$ is irreducible, it follows that $N=M$, showing that $N_0=M_0.$
\end{obs}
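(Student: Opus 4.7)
The plan splits naturally into two parts corresponding to the two assertions of the remark. For the first assertion, note that $\mathcal{S}=FS_1\otimes\cdots\otimes FS_{m^2}\otimes\cdots\otimes FS_1$ has a unique non-trivial tensor factor $FS_{m^2}$, and under the obvious identification $\tilde{e}_{T_\nu}$ corresponds to the classical Young symmetrizer $e_{T_\nu}\in FS_{m^2}$. By the theory recalled in Lemma \ref{etmu} (equivalently, by Theorem 1.11.3 of \cite{sagan}), $FS_{m^2} e_{T_\nu}$ is a minimal left ideal; tensoring with the trivial factors preserves minimality, so $M=\mathcal{S}\tilde{e}_{T_\nu}$ is irreducible as a left $\mathcal{S}$-module with character $\chi_\nu$ on the active factor.

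For the second assertion, my approach is to produce a non-zero $\mathcal{S}$-module homomorphism from $M$ onto $M_0$ and then invoke simplicity of $M$. Introduce
$$
\psi\colon M \longrightarrow M_0,\qquad \psi(u) := u\cdot w,
$$
where $\cdot$ denotes the natural $\mathcal{S}$-action on $P_{\langle t\rangle}$ that permutes the $m^2$ variables of type $i_0$ indexed by the entries of $T_\nu$ (such a choice is possible precisely because $t_{i_0}\geq m^2$). Well-definedness and $\mathcal{S}$-linearity are automatic, since $P_{\langle t\rangle}$ is itself a left $\mathcal{S}$-module under this action. By construction $\psi(\tilde{e}_{T_\nu})=\tilde{e}_{T_\nu}w$ generates $M_0$, so $\psi$ is surjective, and the hypothesis $\tilde{e}_{T_\nu}w\not\equiv 0$ in $A$ (read modulo $Id_2^\#(A)$) guarantees $\psi$ is not the zero map.

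Since $M$ is simple by the first part, $\ker\psi$ is either $0$ or $M$; the non-vanishing of $\psi$ rules out the second option, so $\psi$ is an isomorphism and $M_0\cong M$ is irreducible. An equivalent phrasing at the level of submodules — which I expect the authors to use — is that any non-zero $\mathcal{S}$-submodule $N_0\subseteq M_0$ is of the form $\psi(N)=N\cdot w$ for some $N\subseteq M$ which, by irreducibility of $M$, must be all of $M$, forcing $N_0=M_0$. The only real subtlety, and the main obstacle I anticipate, is the bookkeeping around the action: one must fix precisely which $m^2$ of the $t_{i_0}$ variables of type $i_0$ the algebra $\mathcal{S}$ acts on and verify that this action descends to $P_{\langle t\rangle}(A)$ so that ``$\not\equiv 0$ in $A$'' does witness non-triviality of $\psi$. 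Once that identification is explicit, the argument reduces to the standard observation that a non-zero quotient of a simple module is simple.
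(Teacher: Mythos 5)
Your proposal is correct and follows essentially the same route as the paper: the paper also obtains $M_0=Mw$ and transfers submodules back to $M$ via the correspondence $N_0=Nw$, which is exactly your homomorphism $\psi(u)=uw$ phrased at the level of submodules, with irreducibility of $M$ doing the work in both cases. Your explicit observation that $\psi$ is a non-zero $\mathcal{S}$-module map (non-zero because $\tilde{e}_{T_\nu}w\not\equiv 0$ in $A$ forces $\tilde{e}_{T_\nu}w\neq 0$) is a slightly cleaner justification of the paper's step ``$N_0=Nw$ for some non-zero submodule $N\subseteq M$,'' but it is not a different argument.
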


In the following, we present two well-known lemmas concerning the action of 
$S_n$ on the space $P_n$ of all ordinary multilinear polynomials of degree $n$. The first one gives us a lower bound for the dimensions of the representations, defined by square partitions.

\begin{lema} ([\citealp{RV}, Lemma 4.5.2]) \label{leme4}
Let $q$ be a positive real number and let $n = m^2> e^4q^2$, where $e$ is the base of the natural logarithms. If $ \lambda = (m^m) \vdash n $ is the square partition of $n$, then $deg \chi_{\lambda}> q^{m^2}$.
\end{lema}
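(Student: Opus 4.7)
The natural approach is via the hook length formula combined with Stirling's estimate. The plan is to write
$$
\deg \chi_\lambda = \frac{n!}{\prod_{(i,j)\in D_\lambda} h(i,j)}
$$
for $\lambda = (m^m)$, then estimate both the numerator and denominator explicitly so as to extract a bound of the form $\deg \chi_\lambda \geq (c\,m)^{m^2}$ for a suitable constant $c$, and finally use the hypothesis $m^2 > e^4 q^2$ (equivalently, $m > e^2 q$) to conclude that $c\,m > q$.

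First I would compute the hook lengths of the square diagram: at cell $(i,j)$ with $1 \leq i,j \leq m$ we have $h(i,j) = 2m - i - j + 1$. Fixing the row $i$ and letting $j$ run over $1,\ldots,m$, the row-product telescopes:
$$
\prod_{j=1}^{m} (2m-i-j+1) \;=\; \frac{(2m-i)!}{(m-i)!}.
$$
Multiplying over $i = 1,\ldots,m$ and reindexing by $k = m-i$, this gives
$$
\prod_{(i,j)\in D_\lambda} h(i,j) \;=\; \prod_{k=0}^{m-1} \frac{(m+k)!}{k!}.
$$
So
$$
\deg \chi_{(m^m)} \;=\; \frac{(m^2)!\,\prod_{k=0}^{m-1} k!}{\prod_{k=0}^{m-1}(m+k)!}.
$$

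Next I would apply Stirling in the standard one-sided form $\sqrt{2\pi n}\,(n/e)^n \leq n! \leq e\sqrt{n}\,(n/e)^n$: use the lower bound on $(m^2)!$ and on each $k!$ in the numerator, and the upper bound on each $(m+k)!$ in the denominator. The dominant terms are
$$
\log \deg \chi_{(m^m)} \;\geq\; m^2\log(m^2) - \sum_{k=0}^{m-1}(m+k)\log(m+k) + \sum_{k=0}^{m-1} k\log k \;-\; O(m^2),
$$
and a routine comparison of the two sums by Riemann approximation (or, alternatively, by the elementary inequality $\log\frac{(m+k)!}{k!} \leq (m+k)\log(m+k) - k\log k$ combined with bounding $\sum_{k=0}^{m-1}[(m+k)\log(m+k)-k\log k]$ by $2m^2 \log(2m) - m^2 \log m$ up to lower-order) yields a lower bound of shape
$$
\deg \chi_{(m^m)} \;\geq\; \left(\frac{m\sqrt{e}}{4}\right)^{m^2}\cdot \text{(sub-exponential corrections)}.
$$
Since $m^2 > e^4 q^2$ gives $m > e^2 q$, we get $m\sqrt{e}/4 > e^{5/2} q/4 > q$, and absorbing the sub-exponential factors (they become harmless once $m$ is at all comparable to $e^2 q$) yields $\deg \chi_{(m^m)} > q^{m^2}$.

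The main obstacle is purely computational: tracking the constants carefully enough in the Stirling estimate so that the resulting base $m\sqrt{e}/4$ (or any comparable constant times $m$) is what emerges, and then checking that the sub-exponential correction factors do not spoil the strict inequality for the given threshold $m^2 > e^4 q^2$. This is exactly the kind of bookkeeping carried out in Lemma 4.5.2 of \cite{RV}, which is why the authors prefer to cite the result rather than reprove it.
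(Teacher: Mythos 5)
The paper does not actually prove this lemma; it is quoted verbatim from \cite{RV} (Lemma 4.5.2), so the relevant comparison is with the argument in that reference. Your setup is correct: the hook lengths of the square $(m^m)$ are $h(i,j)=2m-i-j+1$, the row products telescope to $(2m-i)!/(m-i)!$, and the resulting formula $\deg\chi_{(m^m)}=(m^2)!\,\prod_{k=0}^{m-1}k!\big/\prod_{k=0}^{m-1}(m+k)!$ is right; the asymptotic base $m\sqrt{e}/4$ you extract is in fact the correct one (a Riemann-sum computation gives $\log\deg\chi_{(m^m)}=m^2\log m - m^2(2\log 2-\tfrac12)+o(m^2)$). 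The weak point is exactly the step you defer: ``absorbing the sub-exponential corrections'' is where all the work lies, and the phrase ``once $m$ is at all comparable to $e^2q$'' glosses over the fact that the inequality must hold for \emph{every} $m$ with $m^2>e^4q^2$, including $m=1,2$ when $q<1$, so an asymptotic statement does not suffice and a uniform, non-asymptotic version of the Stirling bookkeeping is genuinely fiddly (note also the $k=0$ term, where the one-sided Stirling bound degenerates).

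All of this can be bypassed. The proof in \cite{RV} uses a much cruder but fully elementary estimate: every hook length of the square satisfies $h(i,j)=2m-i-j+1\leq 2m-1<2m$, so $\prod_{(i,j)}h(i,j)<(2m)^{m^2}$, and $n!>(n/e)^n$ (from $e^n>n^n/n!$) gives
$$
\deg\chi_{(m^m)} \;=\; \frac{(m^2)!}{\prod h(i,j)} \;>\; \frac{(m^2/e)^{m^2}}{(2m)^{m^2}} \;=\; \Bigl(\frac{m}{2e}\Bigr)^{m^2} \;>\; \Bigl(\frac{e\,q}{2}\Bigr)^{m^2} \;>\; q^{m^2},
$$
using $m>e^2q$ and $e/2>1$. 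This sacrifices the sharp constant (base $m/(2e)$ instead of $m\sqrt{e}/4$) but requires no correction terms and holds for all $m\geq 1$. I would replace the Stirling computation by this one-line bound; as written, your argument is a correct plan whose decisive quantitative step is asserted rather than carried out.
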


\begin{lema}([\citealp{RV}, Lemma 4.5.3])\label{Lem 2} Let $\lambda\vdash n$ and $\mu \vdash m$, $m\leq n$ and suppose that $\mu \leq \lambda\ (D_{\mu}\subseteq D_{\lambda})$. Consider a Young tableau $T_{\lambda}$ such that the integers $1,2,\cdots, m$ are arranged into  the subtableau $T_{\mu}$. Then, in the group algebra $FS_n$, we can write
$
e_{T_{\lambda}}= a e_{T_{\mu}}b,
$
for some $a,b \in FS_n$.
\end{lema}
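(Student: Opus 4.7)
The plan is to factor $e_{T_\lambda}$ through coset decompositions of its row and column stabilizers. Under the hypothesis that the entries $1,\ldots,m$ occupy exactly the positions of $D_\mu$ inside $D_\lambda$, the $i$-th row (resp.\ column) of $T_\mu$ is a subset of the $i$-th row (resp.\ column) of $T_\lambda$. Consequently, extending permutations in $S_m$ by the identity on $\{m+1,\ldots,n\}$ to obtain elements of $S_n$, I get the subgroup inclusions $R_{T_\mu} \subseteq R_{T_\lambda}$ and $C_{T_\mu} \subseteq C_{T_\lambda}$ inside $S_n$.

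First, I would fix transversals: let $\{\rho_1,\ldots,\rho_k\} \subset R_{T_\lambda}$ be a set of left coset representatives for $R_{T_\mu}$ in $R_{T_\lambda}$, giving $R_{T_\lambda} = \bigsqcup_i \rho_i R_{T_\mu}$, and let $\{\kappa_1,\ldots,\kappa_\ell\} \subset C_{T_\lambda}$ be a set of right coset representatives for $C_{T_\mu}$ in $C_{T_\lambda}$, giving $C_{T_\lambda} = \bigsqcup_j C_{T_\mu} \kappa_j$.

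Next, I substitute into the definition of $e_{T_\lambda}$. Every $\sigma \in R_{T_\lambda}$ writes uniquely as $\rho_i \sigma'$ with $\sigma' \in R_{T_\mu}$, and every $\tau \in C_{T_\lambda}$ writes uniquely as $\tau' \kappa_j$ with $\tau' \in C_{T_\mu}$; since the sign character is multiplicative, $(-1)^{\tau' \kappa_j} = (-1)^{\tau'}(-1)^{\kappa_j}$. Hence
\begin{align*}
e_{T_\lambda}
&= \Bigl(\sum_{\sigma \in R_{T_\lambda}} \sigma\Bigr) \Bigl(\sum_{\tau \in C_{T_\lambda}} (-1)^\tau \tau\Bigr) \\
&= \Bigl(\sum_i \rho_i\Bigr) \Bigl(\sum_{\sigma' \in R_{T_\mu}} \sigma'\Bigr) \Bigl(\sum_{\tau' \in C_{T_\mu}} (-1)^{\tau'} \tau'\Bigr) \Bigl(\sum_j (-1)^{\kappa_j} \kappa_j\Bigr) \\
&= a \cdot e_{T_\mu} \cdot b,
\end{align*}
with $a := \sum_i \rho_i$ and $b := \sum_j (-1)^{\kappa_j} \kappa_j$, both elements of $FS_n$, as required.

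There is no real obstacle here; the argument is essentially bookkeeping, once the essential idempotent is written as the product $R_{T_\lambda}^{+} C_{T_\lambda}^{-}$ of a row-sum and a signed column-sum. The only step that uses the hypothesis in an essential way is the subgroup inclusion at the start, which depends on the specific placement of $1,\ldots,m$ into the subdiagram $D_\mu$; without that placement, the rows of $T_\mu$ would not sit inside rows of $T_\lambda$ and the factorization would fail.
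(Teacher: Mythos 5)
Your proof is correct and is exactly the standard argument for this cited result ([RV, Lemma 4.5.3]), which the paper itself does not reprove: the hypothesis that $1,\ldots,m$ fill the subdiagram $D_\mu$ gives the subgroup inclusions $R_{T_\mu}\subseteq R_{T_\lambda}$ and $C_{T_\mu}\subseteq C_{T_\lambda}$, and the left/right coset decompositions factor the row-sum and signed column-sum as claimed.
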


Given any integers $ d, l \geq 0 $, we define an infinite hook as being
\begin{center}
$ \mathcal{H}(d, l) = \cup_{n \geq 1}\{\lambda = (\lambda_1, \lambda_2, \cdots) \vdash n | \ \lambda_{d + 1} \leq l \} $.
\end{center}

Remember that, given a $ PI $-algebra $A$ over a field $F$ of characteristic zero, we say that its $ n $-th cocharacter $\chi_n (A)$ lies in the hook $\mathcal{H}(d, l ) $ if in the decomposition \  
$ \chi_n(A) = \sum_{\lambda \vdash n} m_{\lambda} \chi_{\lambda} $
of $ \chi_n (A) $ into the sum of irreducible $ S_n $-characters, all multiplicities $ m_{\lambda} $ are equal to zero for all $ \lambda \notin  \mathcal{H}(d, l), $ and in this case, we have  
$$ 
\chi_n(A) = \displaystyle\sum_{\overset{
\lambda \vdash n}
{\lambda \in \mathcal{H}(d,l)}} m_{\lambda} \chi_{\lambda}. 
$$  
Additionally, we write $\chi_n(A) \subseteq \mathcal{H}(d, l)$ in this context.

{\section{Main Results}}

Let us fix some notation for this section. Consider the following finite sets of variables: 
\begin{center}
$\mathcal{Y}_{0,t_1}= \{y_{0,1}, y_{0,2},\cdots, y_{0,t_1}\},\quad \ \ \mathcal{Z}_{0,t_2}= \{z_{0,1}, z_{0,2},\cdots,z_{0,t_2}\},$ \linebreak $\mathcal{Y}_{1,t_3}= \{y_{1,1}, y_{1,2},\cdots,y_{1,t_3}\},$  \ $\mathcal{Z}_{1,t_4}= \{z_{11}, z_{12},\cdots,z_{1t_4}\}$,
\end{center}
for any integers $t_1,t_2,t_3,t_4 \geq 0$.

Then, we write
\begin{equation*}
f(\mathcal{Y}_{0,t_1},\mathcal{Z}_{0,t_2},\mathcal{Y}_{1,t_3},\mathcal{Z}_{1,t_4})\in P_{\langle t\rangle}
\end{equation*}
meaning that $f$ is a multilinear $\#$-superpolynomial in these sets of variables, where $t=t_1 +t_2+t_3+ t_4$ and $\langle t\rangle = (t_1,t_2,t_3,t_4).$ Furthermore, we write $P_{\langle t\rangle}(R_{0,t_1},L_{0,t_2}, R_{1,t_3},L_{1,t_4})$ to denote the vector space of multilinear $\#$-superpolynomials spanned by all multilinear $\#$-supermonomials in the variables $R_{0,t_1}\subseteq Y_0$, $L_{0,t_2}\subseteq Z_0$, $R_{1,t_3}\subseteq Y_1$ and $L_{1,t_4}\subseteq Z_1$, where $R_{0,t_1}$ is the set of $t_1$ even symmetric variables, $L_{0,t_2}$ consists of $t_2$ even skew variables, $R_{1,t_3}$ consists of $t_3$ odd symmetric variables and $L_{1,t_4}$ consists of $t_4$ odd skew variables. We stress that the elements of these sets are not necessarily the first variables of each homogeneous component.

Given a set $P=\{p_1,p_2,\cdots,p_u\}\subseteq\{1,2,\cdots,n\}$, we denote by $S_P$ the group of all permutations that act only in the set $P$.

The next proposition is the key step in the proof of the hook theorem for superidentities with superinvolution or graded involution.

\begin{propo} \label{prop_w_mon}
Let $A$ be a $\#$-superalgebra, $n_1,n_2,n_3,n_4,m$ positive integers and $i_0 \in \{1,2,3,4\}$ with $n_{i_0}\geq m^2$.  Consider $Q=\{q_1,q_2,\cdots,q_{m^2}\}\subseteq\{1,2,\cdots,n_{i_0}\}$, with $|Q|=m^2$. Let 
$$
\rho=\rho_1\otimes \cdots \otimes \rho_{i_0}\otimes \cdots \otimes \rho_4 \in   FS_{n_1}\otimes \cdots \otimes FS_{Q}\otimes \cdots \otimes FS_{ n_4 }$$
such that
$
\rho \cdot f \not \equiv 0
$
in $A$, where $f=f(\mathcal{Y}_{0,n_1},\mathcal{Z}_{0,n_2},\mathcal{Y}_{1,n_3},\mathcal{Z}_{1,n_4})\in P_{\cn}$ is a multilinear $\#$-superpolynomial, $\cn = (n_1,n_2,n_3,n_4)$, $\rho_{i_0}\in FS_Q$ and $\rho_j\in FS_{n_j}$ for all $j\neq i_0$. Then, there exist integers $t_1, t_2, t_3, t_4\geq 0$ and a $\#$-supermonomial $w\in P_{\langle t\rangle}(\mathcal{Y}_{0,t_1},\cdots, X_{t_{i_0}},\cdots, \mathcal{Z}_{1,t_4})$, with $t=t_1+t_2+t_3+t_4$ and $X_{t_{i_0}}=\{y_{\gamma,q_1},\cdots,y_{\gamma,q_{m^2}},\hat{y}_{\gamma,m^2+1},\cdots, \hat{y}_{\gamma,t_{i_0}}\}\subset Y_{\gamma}$ such that $\{\hat{y}_{\gamma,m^2+1},\cdots, \hat{y}_{\gamma,t_{i_0}}\}\subseteq \mathcal{Y}_{\gamma,t_{i_0}}\setminus \{y_{\gamma,q_1},\cdots, y_{\gamma,q_{m^2}}\}$ with $\gamma=0$ if $i_0=1$ and $\gamma=1$ if $i_0=3$, or $X_{t_{i_0}}=\{z_{\gamma,q_1},\cdots,z_{\gamma,q_{m^2}},\hat{z}_{\gamma,m^2+1},\cdots, \hat{z}_{\gamma,t_{i_0}}\}\subset Z_{\gamma}$ such that $\{\hat{z}_{\gamma,m^2+1},\cdots, \hat{z}_{\gamma,t_{i_0}}\}\subseteq \mathcal{Z}_{\gamma,t_{i_0}}\setminus \{z_{\gamma,q_1},\cdots, z_{\gamma,q_{m^2}}\}$ with $\gamma=0$ if $i_0=2$ and $\gamma=1$ if $i_0=4$ satisfying
$$
(Id_1\otimes \cdots \otimes \rho_{i_0}\otimes \cdots \otimes Id_4) w\not \equiv 0
$$ 
in $A$ with $|X_{t_{i_0}}|=t_{i_0}\geq m^2$ and $ m^2\leq t \leq 2m^2+1$.
\end{propo}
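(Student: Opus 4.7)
The plan is threefold: first reduce the operator to just $\rho_{i_0}$, then pass from the polynomial $f$ to a single $\#$-supermonomial, and finally compress that monomial by substitution so that the only surviving non-$Q$ variables are at most $m^2+1$ ``gap'' placeholders.

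For the operator reduction, I would use that the four tensor factors of $\rho=\rho_1\otimes\cdots\otimes\rho_4$ act on pairwise disjoint sets of variables and therefore commute. Writing $R_{i_0}=Id_1\otimes\cdots\otimes\rho_{i_0}\otimes\cdots\otimes Id_4$ and $R_{-i_0}=\rho_1\otimes\cdots\otimes Id_{i_0}\otimes\cdots\otimes\rho_4$, one has $\rho=R_{-i_0}R_{i_0}$, so the hypothesis $\rho\cdot f\not\equiv 0$ already forces $R_{i_0}\cdot f\not\equiv 0$ in $A$. Expanding $f=\sum_\sigma c_\sigma m_\sigma$ as a linear combination of multilinear $\#$-supermonomials, linearity then produces some monomial $m$ in the support of $f$ with $R_{i_0}\cdot m\not\equiv 0$.

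Next I would fix a substitution $a_1,\ldots,a_n$ witnessing the non-triviality of $R_{i_0}\cdot m$, with each $a_j$ in the appropriate component $A_0^{\pm}$ or $A_1^{\pm}$ determined by the type of the variable it replaces, and decompose the monomial as
$$
m=\bar w_0\,x_{q_{\sigma(1)}}\,\bar w_1\,x_{q_{\sigma(2)}}\cdots x_{q_{\sigma(m^2)}}\,\bar w_{m^2},
$$
where the $x_{q_{\sigma(k)}}$ are the $Q$-labeled variables (all of type $i_0$) and each $\bar w_k$ is a (possibly empty) product of non-$Q$ variables. Under the substitution each $\bar w_k$ evaluates to an element $b_k\in A$ that is $\mathbb Z_2$-homogeneous of some degree $\epsilon_k$ (the sum, modulo $2$, of the $\mathbb Z_2$-degrees of its constituents), but in general is neither symmetric nor skew, since products of symmetric or skew elements need not remain so. Writing $b_k=b_k^++b_k^-$ with $b_k^\pm\in A_{\epsilon_k}^\pm$ and exploiting multilinearity in the blocks, the nonzero evaluation $(R_{i_0}\cdot m)(a_1,\ldots,a_n)$ splits as a sum over sign vectors $s\in\{+,-\}^{m^2+1}$, so at least one $s^*$ contributes a nonzero summand.

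The monomial $w$ will then be built by replacing each non-empty block $\bar w_k$ with a fresh variable $\hat x_k$ of type $(\epsilon_k,s_k^*)$ (drawn from $Y_{\epsilon_k}$ if $s_k^*=+$, from $Z_{\epsilon_k}$ if $s_k^*=-$) and omitting $\bar w_k$ altogether when it is empty. Specialising $\hat x_k$ to $b_k^{s_k^*}$ and keeping $a_{q_i}$ for the $Q$-variables, the evaluation of $R_{i_0}\cdot w$ recovers exactly the chosen nonzero summand, so $R_{i_0}\cdot w\not\equiv 0$ in $A$. The $m^2$ $Q$-variables give $t_{i_0}\geq m^2$, and at most $m^2+1$ gap variables are added, yielding $m^2\leq t\leq 2m^2+1$. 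I expect the main obstacle to be precisely this last step: each $\bar w_k$ is generally not $\#$-homogeneous under the symmetric/skew decomposition, so it cannot be replaced directly by a single variable of prescribed type. Splitting $b_k$ into its symmetric and skew parts and exploiting multilinearity in the blocks circumvents this, at the cost of selecting one of the $2^{m^2+1}$ sign patterns that preserves non-triviality, and this expansion into gaps is exactly what forces the upper bound $2m^2+1$ on the total number of variables.
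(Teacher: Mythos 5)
Your proposal is correct and follows essentially the same route as the paper's proof: reduce to the single factor $\rho_{i_0}$, pass to a monomial, decompose it into $Q$-variables separated by blocks, split each block into symmetric and skew parts, and select one surviving sign pattern to produce the monomial $w$ with $m^2\leq t\leq 2m^2+1$. The only (cosmetic) difference is that you perform the $\pm$ splitting on the evaluated elements $b_k\in A_{\epsilon_k}$ and choose a nonzero summand of the evaluation, whereas the paper splits the words $w_p$ formally in the free algebra via $w_p=\frac{w_p+w_p^{\#}}{2}+\frac{w_p-w_p^{\#}}{2}$, forms the polynomial $\tilde{g}$ with new variables $y_p^{\gamma_p}+z_p^{\gamma_p}$, and invokes closure of $Id_2^{\#}(A)$ under graded $\#$-commuting substitutions before extracting a monomial of $\tilde{g}$.
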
 
\begin{proof}
Since
$$
(Id_1\otimes \cdots \otimes \rho_{i_0}\otimes \cdots \otimes Id_4) \cdot (\rho_1\otimes \cdots \otimes Id_{i_0}\otimes \cdots \otimes \rho_4) f = \rho \cdot 
f\not \equiv 0
$$
in $A$, necessarily, 
$$
(Id_1\otimes \cdots \otimes \rho_{i_0}\otimes \cdots \otimes Id_4)\tilde{f} \not \equiv 0
$$
in $A$, where $\tilde{f}= (\rho_1\otimes \cdots \otimes Id_{i_0}\otimes \cdots \otimes \rho_4) f$ is a multilinear $\#$-superpolynomial in $P_{\cn}$. Thus, there exists a $\#$-supermonomial $g$ of $\tilde{f}$ such that
$$
(Id_1\otimes \cdots \otimes \rho_{i_0}\otimes \cdots \otimes Id_4)g \not \equiv 0
$$
in $A$. Let us highlight the variables of $g$ which are under the action of $\rho_{i_0}$. Without loss of generality, we may assume $i_0=1$. By hypothesis, $n_1\geq m^2$. We write $g$ as: 
$$
g=w_{0}y_{0,q_1}w_1y_{0,q_2}\cdots w_{k-1}y_{0,q_k}w_k,
$$
where $k=m^2$, $\{q_1,\cdots, q_k\}\subseteq\{1,\cdots,n_1\}$ and $w_0,w_1,\cdots, w_k$ are multilinear $\#$-supermono\-mi\-als, possibly empty, in the variables of the set $\mathcal{Z}_{0,n_2} \cup \mathcal{Y}_{1,n_3} \cup \mathcal{Z}_{1,n_4} \cup (\mathcal{Y}_{0,n_1}\setminus\{y_{0,q_{1}},\cdots,y_{0,q_k}\})$. 
It is clear that any $\#$-supermonomial $w_p$ is $\mathbb{Z}_2$-homogeneous of a degree $\gamma_p$ for all $p=0,\dots,k.$ Thus, 
when we evaluate the variables of a $\#$-supermonomial $w_p$ in the elements of $A_0^+\cup A_0^-\cup A_1^+\cup A_1^-$ and analyze its homogeneous degree, we obtain an evaluation $\hat{w}_p$ of $w_p$ such that $\hat{w}_p\in A_0$ if $\gamma_p=0$ or $\hat{w}_p\in A_1$ if $\gamma_p=1,$ i.e. $\deg w_p=\deg \hat{w}_p = \gamma_p.$ Hence, we may decompose $\hat{w}_p$ in $\hat{w}_p=\hat{u}_p^{\gamma_p} + \hat{v}_p^{\gamma_p}$, where $\hat{u}_p^{\gamma_p}\in A_{\gamma_p}^+$ and $\hat{v}_p^{\gamma_p}\in A_{\gamma_p}^-$ are the symmetric and skew parts of $\hat{w}_p$, respectively. Let's replace in $g$ the nonempty $\#$-supermonomials $w_p$ by $y_p^{\gamma_p}+z_p^{\gamma_p}$, where $y_p^{0}\in \mathcal{Y}_{0,n_1}\setminus\{y_{0,q_{1}},\cdots,y_{0,q_k}\},$ $y_p^{1}\in \mathcal{Y}_{1,k+1},$ $z_p^{\gamma_p}\in\mathcal{Z}_{\gamma_p,k+1}$, with $\gamma_p=0$ or 1, and
all the variables $\{ y_p^{\gamma_p}, z_p^{\gamma_p}, p=0,\dots,k \}$ are pairwise different. Therefore,
we obtain a new $\#$-superpolynomial $\tilde{g}$ of the form
\begin{equation}\label{eqmon2}
\tilde{g}= (y_0^{\gamma_0}+z_0^{\gamma_0})y_{0,q_1}\cdots (y_{k-1}^{\gamma_{k-1}}+z_{k-1}^{\gamma_{k-1}})y_{0,q_k}(y_k^{\gamma_k}+z_k^{\gamma_k}).
\end{equation}
We denote $u_p^{\gamma_p}=\dfrac{w_p+w_p^\#}{2}$ and $v_p^{\gamma_p}=\dfrac{w_p-w_p^\#}{2}$ the $\mathbb{Z}_2$-homogeneous $\#$-superpolynomials of degree $\gamma_p$ that depend of the same variables of $w_p$. We have that $(u_p^{\gamma_p})^\#=u_p^{\gamma_p}$, $(v_p^{\gamma_p})^\#=-v_p^{\gamma_p}$ and $w_p=u_p^{\gamma_p}+v_p^{\gamma_p}$. It is clear that the $\#$-supermonomial $g$ is obtained from $\tilde{g}$ by replacement $y_p^{\gamma_p}=u_p^{\gamma_p}$ and $z_p^{\gamma_p}=v_p^{\gamma_p}$ in $\mathcal{F}=F\langle X | \mathbb{Z}_2, \# \rangle$. We highlight that this replacement is a graded endomorphism of $\mathcal{F}$ and $\#$-invariant. Since $g$ is obtained from $\tilde{g}$ by suitable replacements of the variables which are different of those belonging to the set $\{y_{0,q_1}\cdots,y_{0,q_{m^2}}\}$, in which $\rho_{i_0}=\rho_1$ acts, then the $\#$-superpolynomial $(\rho_1\otimes Id_2 \otimes Id_3 \otimes Id_4) g$ also is obtained from $(\rho_1\otimes Id_2 \otimes Id_3 \otimes Id_4) \tilde{g}$ by the same replacements. In view of 
$$
(\rho_1\otimes Id_2 \otimes Id_3 \otimes Id_4) g \not \equiv 0
$$
in $A$ and $g$ is obtained from $\tilde{g}$ by suitable replacements of variables, we have
$$
(\rho_1\otimes Id_2 \otimes Id_3 \otimes Id_4)\tilde{g} \not \equiv 0
$$
in $A$. Since $\tilde{g}$ is a multilinear $\#$-superpolynomial, which is sum of
$\#$-supermonomials of the complete degree at least $m^2$ and at most $2m^2+1$, necessarily, there exist integers $t_1,t_2,t_3,t_4\geq 0$ and a $\#$-supermonomial $w=w(X_{t_1}, \mathcal{Z}_{0,t_2},\mathcal{Y}_{1,t_3},\mathcal{Z}_{1,t_4})\in P_{\langle t \rangle}(X_{t_1}, \mathcal{Z}_{0,t_2},\mathcal{Y}_{1,t_3},\mathcal{Z}_{1,t_4})$, with $t=t_1+t_2+t_3+t_4$ and $X_{t_{1}}=\{y_{0,q_1},\cdots,y_{0,q_{m^2}},\hat{y}_{0,m^2+1},\cdots, \hat{y}_{0,t_{1}}\}\subset Y_{0}$ such that $\{\hat{y}_{0,m^2+1},\cdots, \hat{y}_{0,t_{1}}\}\subseteq \mathcal{Y}_{0,t_{1}}\setminus \{y_{0,q_1},\cdots, y_{0,q_{m^2}}\}$, satisfying $m^2\leq t\leq 2m^2+1$ and
$$
(\rho_1\otimes Id_2 \otimes Id_3 \otimes Id_4) w \not \equiv 0
$$
in $A.$ Observe that $w$ can be chosen as one of the monomials of $\tilde{g}$ possibly with renamed variables. This proves the proposition for the case $i_0=1$.  The other cases $i_0=2,3$ and $4$ are proved in the same way.
\end{proof}

\begin{obs} In the previous proof, the permutation $\rho \in   FS_{n_1}\otimes \cdots \otimes FS_{m^2 }\otimes \cdots \otimes FS_{ n_4 }$ acts on the $\#$-superpolynomial $f\in P_{\cn}$ exchanging the $n_1, \cdots, m^2,\cdots, n_4$ variables and fixing the remaining ones respectively.
\end{obs}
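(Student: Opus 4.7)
The statement is essentially an unpacking of the definition of the action of $S_{\langle n\rangle}$ on $P_{\langle n\rangle}$ given in Section 3, together with the standard embedding of $S_Q$ (for a subset $Q \subseteq \{1,\ldots,n_{i_0}\}$) as a subgroup of $S_{n_{i_0}}$, so no substantial argument is needed; the remark is a clarification of the tensor notation used in Proposition \ref{prop_w_mon} rather than a new theorem.

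The plan is the following. First, I would recall the definition of the action: for $\langle \sigma\rangle = (\sigma_1,\sigma_2,\sigma_3,\sigma_4) \in S_{\langle n\rangle}$ and $f \in P_{\langle n\rangle}$, the element $\langle\sigma\rangle f$ permutes each of the four homogeneous blocks of variables independently, with $\sigma_j$ acting only on the $n_j$ variables of the $j$-th block (even symmetric, even skew, odd symmetric, odd skew) and leaving the variables of the other three blocks untouched. Since $FS_{\langle n\rangle}$ is canonically identified with $FS_{n_1}\otimes FS_{n_2}\otimes FS_{n_3}\otimes FS_{n_4}$, an elementary tensor $\rho_1\otimes\rho_2\otimes\rho_3\otimes\rho_4$ acts by applying each $\rho_j$ to the corresponding block.

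Second, I would recall the natural embedding $FS_Q \hookrightarrow FS_{n_{i_0}}$: the symmetric group $S_Q$ is identified with the subgroup of $S_{n_{i_0}}$ consisting of those permutations that move only elements of $Q \subseteq \{1,\ldots,n_{i_0}\}$ and fix every index outside $Q$. Extending this inclusion linearly yields the embedding at the level of group algebras. Hence, when $\rho_{i_0}\in FS_Q$ is viewed inside $FS_{n_{i_0}}$ and made to act on the $i_0$-th block of variables, it permutes only the $m^2$ variables whose indices lie in $Q$ and leaves the remaining $n_{i_0}-m^2$ variables of that block fixed.

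Combining these two facts applied to $\rho = \rho_1\otimes\cdots\otimes\rho_{i_0}\otimes\cdots\otimes\rho_4$, we conclude that $\rho$ acts on $f\in P_{\langle n\rangle}$ by permuting the $n_j$ variables of the $j$-th block via $\rho_j$ for each $j\neq i_0$, and by permuting only the $m^2$ variables indexed by $Q$ within the $i_0$-th block via $\rho_{i_0}$, fixing all remaining variables of that block. This is exactly what the remark asserts. No obstacle is expected; the verification consists solely of unwinding the definitions from Section 3 and the canonical inclusion of a symmetric group on a subset into the symmetric group on the ambient set.
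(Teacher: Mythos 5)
Your proposal is correct and matches the paper's intent: the remark is a purely definitional clarification of the tensor notation, and the paper itself offers no proof beyond the implicit appeal to the action of $S_{\langle n\rangle}$ on $P_{\langle n\rangle}$ defined in Section~3 and the standard inclusion of $S_Q$ into $S_{n_{i_0}}$. Your unwinding of these two definitions is exactly the argument the authors leave tacit.
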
 

\begin{exem}\rm
Let $m^2=2^2$, $n=13$, $n_1=3$, $n_2=7$, $n_3=1$, $n_4=2$ and
\begin{eqnarray*}
f & = & y_{0,1}z_{0,1}y_{0,2}z_{0,2}z_{0,3}y_{0,3}z_{0,4}z_{0,5}z_{0,6}z_{0,7}y_{1,1}z_{1,1}z_{1,2} \\
& & - z_{0,2}y_{0,2}y_{0,1}z_{0,4}y_{0,3}z_{0,1}z_{0,6}z_{0,3}z_{1,1}z_{0,7}z_{1,2}y_{1,1}z_{0,5} \in P_{\langle n \rangle}.
\end{eqnarray*}
Assume that $\#$ is a superinvolution.
We aim to determine a $\#$-supermonomial given in Proposition \ref{prop_w_mon} for some $\#$-superalgebra $A,$ and $\rho \in FS_3 \otimes FS_7 \otimes FS_1 \otimes FS_2$. Let's assume that $i_0=2$ then $n_2=7\geq 2^2 = m^2$, with $m=2,$ and choose $Q=\{2,3,4,6\}$. For example, let $\rho \in S_{3}\otimes S_{Q}\otimes S_{1} \otimes S_{2}$ be the element
$$
\rho = (123) \otimes (236) \otimes (1) \otimes (12).
$$
Then, $\rho_{i_0}=\rho_2=(236)$ acts on the indeterminates $\{z_{0,2}, z_{0,3}, z_{0,4}, z_{0,6}\}$.
Suppose that $\rho \cdot f \not \equiv 0$ in $A$. Then, we highlight the indeterminates $\{z_{0,2}, z_{0,3}, z_{0,4}, z_{0,6}\}$ in bold for emphasis and obtain
\begin{eqnarray*}
\rho \cdot f &=& [(1) \otimes (236) \otimes (1) \otimes (1)] \cdot \underbrace{[(123) \otimes (1) \otimes (1) \otimes (12)] f}_{\tilde{f}} \\
& = & [(1) \otimes (236) \otimes (1) \otimes (1)] \tilde{f} \\
& = & [(1) \otimes (236) \otimes (1) \otimes (1)] \left( y_{0,2}z_{0,1}y_{0,3}\mathbf{z}_{0,2}\mathbf{z}_{0,3}y_{0,1}\mathbf{z}_{0,4}z_{0,5}\mathbf{z}_{0,6}z_{0,7}y_{1,1}z_{1,2}z_{1,1} \right. \\
&& \left. - \mathbf{z}_{0,2}y_{0,3}y_{0,2}\mathbf{z}_{0,4}y_{0,1}z_{0,1}\mathbf{z}_{0,6}\mathbf{z}_{0,3}z_{1,2}z_{0,7}z_{1,1}y_{1,1}z_{0,5} \right) \not \equiv 0
\end{eqnarray*}
in $A$. Then, for one of the $\#$-supermonomials
$$u_1 = y_{0,2}z_{0,1}y_{0,3}\mathbf{z}_{0,2}\mathbf{z}_{0,3}y_{0,1}\mathbf{z}_{0,4}z_{0,5}\mathbf{z}_{0,6}z_{0,7}y_{1,1}z_{1,2}z_{1,1}$$
or
$$u_2 = \mathbf{z}_{0,2}y_{0,3}y_{0,2}\mathbf{z}_{0,4}y_{0,1}z_{0,1}\mathbf{z}_{0,6}\mathbf{z}_{0,3}z_{1,2}z_{0,7}z_{1,1}y_{1,1}z_{0,5}$$
of the $\#$-superpolynomial $\tilde{f}$, we have that $[(1) \otimes (236) \otimes (1) \otimes (1)] u_i \not \equiv 0$ in $A$ for $i=0$ or $i=1$. Suppose, for example, that $[(1) \otimes (236) \otimes (1) \otimes (1)] u_1 \not \equiv 0$ in $A$. Therefore, considering the same notation as in Proposition \ref{prop_w_mon}, we have $g = u_1$. Now, let's represent $g$ as follows:
$$
g = w_0 \mathbf{z}_{0,2} w_1 \mathbf{z}_{0,3} w_2 \mathbf{z}_{0,4} w_3 \mathbf{z}_{0,6} w_4,
$$
where
$$
w_0 = y_{0,2} z_{0,1} y_{0,3} \in \mathcal{F}_0, \quad w_1 = \underline{\phantom{a1}} \ (\text{the empty word}), \quad w_2 = y_{0,1} \in \mathcal{F}_0,
$$
$$
 w_3 = z_{0,5} \in \mathcal{F}_0, \quad w_4 = z_{0,7} y_{1,1} z_{1,2} z_{1,1} \in \mathcal{F}_1.
$$
The $\#$-supermonomials $w_p$, with $p = 0, 1, 2, 3, 4$, have respective degrees in the $\mathbb{Z}_2$-grading of the free $\#$-superalgebra $\mathcal{F} = \mathcal{F}_0 \oplus \mathcal{F}_1$.

By decomposing each non-empty word $w_p$, for $0 \leq p \leq 2^2$, into its symmetric and antisymmetric parts and then substituting them with new indeterminates, we obtain a multilinear $\#$-superpolynomial $\tilde{g}$ composed of $\#$-supermonomials of degree $8$ (the degree is at least $4=2^2$ and at most $9 = 2(2^2) + 1$). It follows that
$$
\tilde{g} = (y_{0}^{0} + z_{0}^{0}) \mathbf{z}_{0,2} \mathbf{z}_{0,3} (y_{2}^{0} + z_{2}^{0}) \mathbf{z}_{0,4} (y_{3}^{0} + z_{3}^{0}) \mathbf{z}_{0,6} (y_{4}^{1} + z_{4}^{1}),
$$
and from the fact that
\begin{eqnarray*}
h = [(1) \otimes (236) \otimes (1) \otimes (1)] g \not \equiv 0
\end{eqnarray*}
in $A$, we have that
\begin{eqnarray*}
\tilde{h} = [(1) \otimes (236) \otimes (1) \otimes (1)] \tilde{g} \not \equiv 0
\end{eqnarray*}
in $A$ as well, where $\rho_2=(2,3,6)$ acts only in the variables $\{ z_{0,2}, z_{0,3}, z_{0,4}, z_{0,6}\}.$ Otherwise (if $\tilde{h} \equiv 0$ in $A$), we would get $h \equiv 0$ in $A$ since $h$ is obtained from $\tilde{h}$ by the following substitution:
\begin{eqnarray*}
&& y_{0}^{0} = \displaystyle\frac{y_{0,2} z_{0,1} y_{0,3} - y_{0,3} z_{0,1} y_{0,2}}{2}, \quad z_{0}^{0} = \displaystyle\frac{y_{0,2} z_{0,1} y_{0,3} +y_{0,3} z_{0,1} y_{0,2}}{2}, \\
&& y_{2}^{0} = y_{0,1}, \quad z_{2}^{0} = 0, \quad y_{3}^{0} = 0, \quad z_{3}^{0} = z_{0,5},\\
&& y_{4}^{1} = \displaystyle\frac{z_{0,7} y_{1,1} z_{1,2} z_{1,1} + z_{1,1} z_{1,2} y_{1,1} z_{0,7}}{2}, \quad z_{4}^{1} = \displaystyle\frac{z_{0,7} y_{1,1} z_{1,2} z_{1,1} - z_{1,1} z_{1,2} y_{1,1} z_{0,7}}{2}, 
\end{eqnarray*}
since
\begin{eqnarray*}
&& w_0^{\#} = (y_{0,2} z_{0,1} y_{0,3})^{\#} = (y_{0,3})^{\#} (z_{0,1})^{\#} (y_{0,2})^{\#} = y_{0,3} (-z_{0,1}) y_{0,2} = - y_{0,3} z_{0,1} y_{0,2}, \\
&& w_{2}^{\#} = (y_{0,1})^{\#} = y_{0,1}, \quad w_3^{\#} = (z_{0,5})^{\#} = - z_{0,5},
\end{eqnarray*}
and
\begin{eqnarray*}
w_4^{\#} & = & (z_{0,7} y_{1,1} z_{1,2} z_{1,1})^{\#} = - (z_{1,1})^{\#} (z_{1,2})^{\#} (y_{1,1})^{\#} (z_{0,7})^{\#} \\
& = & - (- z_{1,1}) (- z_{1,2}) y_{1,1} (- z_{0,7}) = z_{1,1} z_{1,2} y_{1,1} z_{0,7}.
\end{eqnarray*}
From the fact that $\tilde{h} = [(1) \otimes (236) \otimes (1) \otimes (1)] \tilde{g} \not \equiv 0$ in $A$, it follows that for one of the monomials $w$ in $\tilde{g}$, we also have $[(1) \otimes (236) \otimes (1) \otimes (1)] w \not \equiv 0$ in $A$. Let's assume, for example, that
\begin{eqnarray*}
w & = & y_{0}^{0} \mathbf{z}_{0,2} \mathbf{z}_{0,3} y_{2}^{0} \mathbf{z}_{0,4} z_{3}^{0} \mathbf{z}_{0,6} y_{4}^{1} \\
& = & y_{0,1} \mathbf{z}_{0,2} \mathbf{z}_{0,3} y_{0,2} \mathbf{z}_{0,4} z_{0,1} \mathbf{z}_{0,6} y_{1,1} \in P_{(2,5,1,0)}(\mathcal{Y}_{0,2}, X_2, \mathcal{Y}_{1,1}, \mathcal{Z}_{1,0}),
\end{eqnarray*}
considering that $y_{0}^{0} = y_{0,1}$, $y_{2}^{0} = y_{0,2}$, $z_{3}^{0} = z_{0,1} \notin \{\mathbf{z}_{0,2}, \mathbf{z}_{0,3}, \mathbf{z}_{0,4}, \mathbf{z}_{0,6}\}$, and $y_{4}^{1} = y_{1,1}$. Here, using the notation from Proposition \ref{prop_w_mon}, we have $\mathcal{Y}_{0,2} = \{ y_{0,1}, y_{0,2} \}$, $\mathcal{Y}_{1,1} = \{ y_{1,1} \}$, $X_2 = \{ \mathbf{z}_{0,2}, \mathbf{z}_{0,3}, \mathbf{z}_{0,4}, \mathbf{z}_{0,6}, z_{0,1} \}$ (with $\rho_2 = (236)$ acting on the indeterminates $\{ \mathbf{z}_{0,2}, \mathbf{z}_{0,3}, \mathbf{z}_{0,4}, \mathbf{z}_{0,6} \}$), $\mathcal{Z}_{1,0} = \emptyset$ (empty set), $t_1 = |\mathcal{Y}_{0,2}| = 2$, $t_2 = |X_2| = 5 \geq 4=m^2$, $t_3 = |\mathcal{Y}_{1,1}| = 1$, $t_4 = |\mathcal{Z}_{1,0}| = 0$, and $t = t_1 + t_2 + t_3 + t_4 = 8$ (since $2^2 = 4 \leq 8 \leq 2(2)^2 + 1$, with $m = 2$, $t = 8$). For the $\#$-supermonomial $w$, we have that
\begin{eqnarray*}
[(1) \otimes (236) \otimes (1) \otimes (1)] w \neq 0
\end{eqnarray*}
in $A$, as stated in Proposition \ref{prop_w_mon}.
\end{exem}

Finally, we are able to prove the main result of this paper. We will write $\lambda \in \mathcal{H}(d,l)$ for some integers $d,l\geq 0$ if the corresponding Young diagram $D_\lambda$ is contained in $\mathcal{H}(d,l)$ and denote $D_\lambda\subseteq \mathcal{H}(d,l)$.

We use also notation
$\langle d,l \rangle=(d_1,l_1;d_2,l_2;d_3,l_3;d_4,l_4)$ for nonnegative integers
$d_i, l_i \geq 0,$ \ $i=1,\dots,4.$

\begin{teo}\label{teoprin}
{\textit (The hook theorem for $\#$-superalgebras)} Let $A$ be a $\#$-superalgebra. If $A$ is a $PI$-algebra, then there exist integers $d_i, l_i\geq 0$, with $i= 1,2,3, 4$, such that the $\cn$-th cocharacter of $\#$-superidentities, $\chi_{\cn}(A)$, is contained in a quadruple hook $\mathcal{H}_{\langle d, l \rangle}\langle n \rangle= (\mathcal{H}(d_1, l_1),\mathcal{H}(d_2, l_2), \mathcal{H}(d_3, l_3), \mathcal{H}(d_4, l_4) )$, that is, 
$$
\chi_{\langle n\rangle}(A)= \displaystyle\sum_{\overset{
\langle \lambda \rangle\vdash \cn}
{\langle \lambda \rangle \in \mathcal{H}_{\langle d, l \rangle}\langle n \rangle}}
m_{\langle \lambda \rangle}\chi_{\langle \lambda \rangle}.
$$
\end{teo}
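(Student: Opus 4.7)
The plan is to argue by contradiction, using the exponential bound on the $\#$-graded codimensions. By Corollary \ref{corregv} there exists $\overline{d}\geq 1$ with $c_t^{grs}(A)\leq \overline{d}^{\,t}$ for every $t\geq 1$. I would fix a real number $q>\overline{d}^{\,3}$ and then choose an integer $m_0$ large enough so that both $m_0^2>e^4q^2$ and $q^{m_0^2}>\overline{d}^{\,2m_0^2+1}$ hold; for each $i=1,2,3,4$ I would take $d_i,l_i\geq 0$ satisfying $\min(d_i+1,l_i+1)\geq m_0$. The claim is that with these choices $\chi_{\cn}(A)\subseteq \mathcal{H}_{\langle d,l\rangle}\cn$ for every $\cn$.

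Suppose to the contrary that some $\langle\mu\rangle\vdash\cn$ satisfies $m_{\langle\mu\rangle}\neq 0$ and $\mu(i_0)\notin \mathcal{H}(d_{i_0},l_{i_0})$ for some $i_0$; by symmetry, assume $i_0=1$. Since $\mu(1)_{d_1+1}\geq l_1+1$, the diagram $\mu(1)$ contains the square subdiagram $\nu=(m^m)$ with $m=\min(d_1+1,l_1+1)\geq m_0$. Proposition \ref{propetm} produces $f\in P_{\cn}$ and a multitableau $T_{\langle\mu\rangle}$ with $e_{T_{\langle\mu\rangle}}f\not\equiv 0$ on $A$; by Lemma \ref{etmu} I may arrange $T_{\mu(1)}$ so that its top-left $m\times m$ subtableau $T_\nu$ is filled by a prescribed set $Q=\{q_1,\dots,q_{m^2}\}\subseteq\{1,\dots,n_1\}$. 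Lemma \ref{Lem 2} then gives $a',b'\in FS_{n_1}$ with $e_{T_{\mu(1)}}=a'\,e_{T_\nu}\,b'$, and setting $\tilde{f}=(b'\otimes Id_2\otimes Id_3\otimes Id_4)f$ one obtains $\rho\,\tilde{f}\not\equiv 0$ on $A$, where
\[
\rho=e_{T_\nu}\otimes e_{T_{\mu(2)}}\otimes e_{T_{\mu(3)}}\otimes e_{T_{\mu(4)}}\in FS_Q\otimes FS_{n_2}\otimes FS_{n_3}\otimes FS_{n_4}.
\]

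This is exactly the setting of Proposition \ref{prop_w_mon}, which yields nonnegative integers $t_1,t_2,t_3,t_4$ with $t_1\geq m^2$ and $m^2\leq t=t_1+t_2+t_3+t_4\leq 2m^2+1$, together with a $\#$-supermonomial $w\in P_{\langle t\rangle}$ such that $\tilde{e}_{T_\nu}w\not\equiv 0$ on $A$. By Remark \ref{obspropS}, the left $\mathcal{S}$-module generated by $\tilde{e}_{T_\nu}w$ inside $P_{\langle t\rangle}(A)$ is irreducible of dimension $\deg\chi_{(m^m)}$, whence $c_{\langle t\rangle}(A)\geq \deg\chi_{(m^m)}$. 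Since $m^2\geq m_0^2>e^4q^2$, Lemma \ref{leme4} gives $\deg\chi_{(m^m)}>q^{m^2}$, while Corollary \ref{corregv} forces $c_{\langle t\rangle}(A)\leq c_t^{grs}(A)\leq \overline{d}^{\,t}\leq \overline{d}^{\,2m^2+1}$, so
\[
q^{m^2}<\overline{d}^{\,2m^2+1},
\]
contradicting the choice $q>\overline{d}^{\,3}$ as soon as $m\geq m_0$.

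The main obstacle I expect is justifying the reduction from $e_{T_{\langle\mu\rangle}}f$---which a priori lives in $P_{\cn}$ with the $n_i$ arbitrarily large---down to a bounded-degree monomial $w$ with $t\leq 2m^2+1$: this is precisely the content of Proposition \ref{prop_w_mon}, whose proof strips off the tensor factors outside the slot $i_0$ and the permutations outside $Q$, and then replaces each $\mathbb{Z}_2$-homogeneous block between the $Q$-variables by symmetric/skew placeholder variables, so that the resulting monomial depends on only a bounded number of variables in the other slots. Once that reduction is in hand, the dimension estimate of Lemma \ref{leme4} combined with Corollary \ref{corregv} closes the argument, and applying the same reasoning to each $i_0\in\{1,2,3,4\}$ yields the four desired hook parameters.
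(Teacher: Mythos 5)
Your proposal is correct and follows essentially the same route as the paper's proof: assume a multiplicity survives outside the hook, use Proposition \ref{propetm} to get $e_{T_{\langle\mu\rangle}}f\not\equiv 0$, conjugate the tableau and apply Lemma \ref{Lem 2} to isolate the idempotent $e_{T_\nu}$ of the $m\times m$ square, invoke Proposition \ref{prop_w_mon} to descend to a monomial of total degree at most $2m^2+1$, and then contradict the exponential codimension bound of Corollary \ref{corregv} via Remark \ref{obspropS} and Lemma \ref{leme4}. The only differences are cosmetic choices of the numerical parameters (the paper fixes $q=\widehat{d}^{\,3}$ and the square hook $\mathcal{H}(m,m)$ with $m\approx e^2q+1$, while you allow general $d_i,l_i$ with $\min(d_i+1,l_i+1)$ large), which do not change the argument.
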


\begin{proof}
By hypothesis, $A$ is a $PI$-algebra. From Corollary \ref{corregv}, there exists $\widehat{d} \geq 1$ such that $c_n^{grs}(A)\leq (\widehat{d})^n$ for all $n\geq 1$. Consider $q=(\widehat{d})^{3}$ and an integer $m$ such that $e^2q + 1\leq m\leq e^2q + 2$, where $e$ denotes the base of the natural logarithm. Let us prove that, for any $n\geq 1$, the $\langle n\rangle$-th $\#$-cocharacter of $A$ is contained in the quadruple square hook $(\mathcal{H}(m, m),\mathcal{H}(m, m), \mathcal{H}(m, m), \mathcal{H}(m, m))$. 

Suppose, by contradiction, that there exists $n\geq 1,$ and
$\cn=(n_1,\dots,n_4)$ with $n_1+\dots+n_4=n,$ such that 
\begin{equation}\label{=cocara}
\chi_{\langle n \rangle}(A)= \displaystyle\sum_{\langle \lambda \rangle\vdash \cn}m_{\langle \lambda \rangle}\chi_{\langle \lambda \rangle}, 
\end{equation}
and $m_{\langle \mu \rangle}\neq 0$ for some multipartition $\langle \mu \rangle \notin (\mathcal{H}(m, m),\mathcal{H}(m, m), \mathcal{H}(m, m), \mathcal{H}(m, m))$. Hence, at least one of the $\mu_{(i)}'s$ of  $\langle \mu \rangle=(\mu(1),\dots,\mu(4))$ does not belong to $\mathcal{H}(m, m)$. Say $D_{\mu(i_0)} \nsubseteq \mathcal{H}(m, m)$, where $i_0\in \{1,2,3,4\}$. Thus, $D_{\mu(i_0)}$ contains the square diagram $D_{\nu_{i_0}}$, where $\nu_{i_0}= (m^{m})\vdash m^{2}$. We have that $\mu(i_0)\geq \nu_{i_0}$ ($D_{\nu_{i_0}} \subseteq D_{\mu(i_0)}$). Since $m_{\langle \mu \rangle}\neq 0$ and $A$ is a $PI$-algebra, it follows from Proposition \ref{propetm} that there exist a multitableau $T_{\langle \mu \rangle}$, with $\langle \mu\rangle\vdash \cn$, and a $\#$-superpolynomial $f\in P_{\langle n \rangle}$ such that $e_{T_{\langle \mu \rangle}}f \notin Id_2^{\#}(A)$. Once  
$$
e_{T_{\langle \mu \rangle}}f=(e_{T_{\mu(1)}}\otimes e_{T_{\mu(2)}}\otimes e_{T_{\mu(3)}}\otimes e_{T_{\mu(4)}})f\notin Id_2^{\#}(A),
$$
there exists a multilinear $\#$-supermonomial $m_1=m_1 (\mathcal{Y}_{0,n_1},\mathcal{Z}_{0,n_2},\mathcal{Y}_{1,n_3},\mathcal{Z}_{1,n_4})\in P_{\langle n \rangle} $ such that \linebreak $e_{T_{\langle \mu \rangle}}m_1 \notin Id_2^{\#}(A)$ ($m_1$ is one of the monomials of $f$).

Now, consider a quadruple of permutations $\langle \sigma \rangle_{i_0}= (Id_1, \cdots, \sigma_{i_0}, \cdots, Id_4)\in S_{n_1}\times \cdots \times S_{n_{i_0}}\times \cdots \times S_{n_4}$ ($\sigma_{i_0}\in S_{n_{i_0}}$) such that in the multitableau $\langle \sigma \rangle_{i_0} T_{\langle\mu \rangle}$ the tableau $\sigma_{i_0} T_{\mu(i_0)}$ has the entries from $1$ to $m^2$ in the positions of the square diagram $D_{\nu_{i_0}} \subseteq D_{\mu(i_0)}$. Then we have the corresponding element $e_{\langle \sigma \rangle_{i_0} T_{\langle \mu \rangle}}={\langle \sigma \rangle}_{i_0} e_{T_{\langle \mu \rangle}}{\langle \sigma \rangle}_{i_0}^{-1}$ of $FS_{\cn}$, where 
$\langle \sigma \rangle_{i_0}^{-1}= (Id_1, \cdots, \sigma_{i_0}^{-1}, \cdots, Id_4)$. It implies that
$$
\langle\sigma \rangle_{i_0}^{-1}e_{\langle \sigma \rangle_{i_0} T_{\langle \mu \rangle}}{\langle \sigma \rangle}_{i_0} m_1= e_{T_{\langle \mu \rangle}} m_1 \notin Id_2^{\#}(A).
$$
Denoting $m'=\langle \sigma \rangle_{i_0} m_1$, we have that $\langle \sigma \rangle_{i_0}^{-1} e_{\langle \sigma \rangle_{i_0} T_{\langle \mu \rangle}} m'\notin Id_2^{\#}(A)$ and, hence, $e_{\langle \sigma \rangle_{i_0} T_{\langle \mu \rangle}} m' \notin Id_2^{\#}(A)$. We denote by $T_{\nu_{i_0}}$ the standard tableau of the diagram $D_{\nu_{i_0}}$. Using the fact that, in the multitableau $\widetilde{T}_{\langle\mu\rangle} = \langle\sigma\rangle_{i_0} T_{\langle\mu\rangle}=(T_{\mu(1)},\cdots,\sigma_{i_0}T_{\mu(i_0)},\cdots,T_{\mu(4)})$, the tableau $\sigma_{i_0}T_{\mu(i_0)}$ contains the numbers $1,2,\cdots,m^2,$ which are the entries of the tableau $T_{\nu_{i_0}}$, by Lemma \ref{Lem 2}, there exist $a, b \in FS_{n_{i_0}}$ such that 
$$
e_{\sigma_{i_0} T_{ \mu(i_0)}}= a e_{T_{\nu_{i_0}}}b.
$$
Thus, the $\#$-superpolynomial
\begin{eqnarray*}
e_{\widetilde{T}_{\langle\mu \rangle}}m' & = & (e_{T_{\mu(1)}}\otimes\cdots\otimes e_{\sigma_{i_0}T_{\mu(i_0)}}\otimes\cdots e_{T_{\mu(4)}})m'
\end{eqnarray*}
$$
=(Id_1\otimes \cdots \otimes a \otimes \cdots \otimes Id_4) (e_{T_{\mu(1)}}\otimes\cdots \otimes e_{T_{\nu_{i_0}}}\otimes \cdots \otimes e_{T_{\mu(4)}} )(Id_1 \otimes \cdots \otimes b\otimes \cdots\otimes Id_4) m' 
$$
does not belong to $Id_2^{\#}(A)$ and, then, $(e_{T_{\mu(1)}}\otimes\cdots \otimes e_{T_{\nu_{i_0}}}\otimes \cdots \otimes e_{T_{\mu(4)}}) f_1 \notin Id_2^{\#}(A)$, where $f_1=(Id_1 \otimes \cdots \otimes b \otimes \cdots \otimes Id_4) m'\in P_{\cn}$, because $Id_2^\#(A)$ is a left $FS_{\cn}$-module. It follows that

\begin{equation}\label{eqmon1}
(Id_1\otimes\cdots \otimes e_{T_{\nu_{i_0}}}\otimes \cdots \otimes Id_4)f_2\notin Id_2^{\#}(A),
\end{equation}
where $f_2=(e_{T_{\mu(1)}}\otimes\cdots \otimes Id_{i_0}\otimes \cdots \otimes e_{T_{\mu(4)}})f_1\in P_{\cn}$. Observe that $n_{i_0}\geq m^2$ and the element $e_{T_{\nu_{i_0}}}$ acts in the variables with indexes in the set $\{1,\cdots,m^2\}$ of the correspondent type of the $\#$-superpolynomial $f_2$ (the entries of tableau $T_{\nu_{i_0}}$). By Proposition \ref{prop_w_mon}, 
we may find integers $t_1,t_2,t_3,t_4 \geq 0$, with $t_{i_0}\geq m^2$, $t=t_1+t_2+t_3+t_4$, $m^2\leq t\leq 2m^2+1$, and a multilinear $\#$-supermonomial $w= w(\mathcal{Y}_{0,t_1},\mathcal{Z}_{0,t_2},\mathcal{Y}_{1,t_3},\mathcal{Z}_{1,t_4})\in P_{\langle t \rangle}$ such that
\begin{equation}\label{eqnot1}
\widetilde{w}=(Id_1\otimes\cdots \otimes e_{T_{\nu_{i_0}}}\otimes \cdots \otimes Id_4) w \notin Id_2^{\#}(A).
\end{equation}

Let us consider $P_{\langle t \rangle}$ as a left $(FS_1 \otimes \cdots \otimes FS_{m^2 }\otimes \cdots \otimes FS_1)$-module, where $S_1$ is the trivial group, requiring that $FS_1 \otimes \cdots \otimes FS_{m^2 }\otimes \cdots \otimes FS_1$ acts respectively in the first $t_1,\cdots,m^2,\cdots, t_4$ variables of the corresponding type and fixes the others if they exist ($FS_1$ acts trivially in $t_j$ variables, $j \neq i_0$). Now, let $M$ be the $(FS_1 \otimes \cdots \otimes FS_{m^2 }\otimes \cdots \otimes FS_1)$-submodule of $P_{\langle t\rangle}$ generated by the element $\widetilde{w}.$
Therefore, $M \oplus (P_{\langle t\rangle}\cap Id_2^{\#}(A))$ is also an $(FS_1 \otimes \cdots \otimes FS_{m^2 }\otimes \cdots \otimes FS_1)$-submodule of $P_{\langle t\rangle}$. It follows that
\begin{eqnarray*}
\dim (P_{\langle t \rangle}) \geq \dim(M \oplus P_{\langle t\rangle}\cap Id_2^{\#}(A)) = \dim(M) + \dim(P_{\langle t\rangle}\cap Id_2^{\#}(A)).
\end{eqnarray*}
By Remark \ref{obspropS}, the module $M$ is irreducible and, by (\ref{eqnot1}), we have also that $M \not\subseteq (P_{\langle t\rangle}\cap Id_2^{\#}(A))$ (in particular, that is why the sum $M \oplus (P_{\langle t\rangle}\cap Id_2^{\#}(A))$ is direct). It provides also
\begin{eqnarray}\label{eq34}
c_{\langle t\rangle}(A) & = & \dim(P_{\langle t\rangle} / (P_{\langle t\rangle}\cap Id_2^{\#}(A))) \geq \dim(M).
\end{eqnarray}

Since $\dim M = \chi_{\nu_{i_0}}(1)$, where $\nu_{i_0}=(m^m)\vdash m^2$, and $m^2\geq (e^2q + 1)^2 > e^4q^2$, it follows from Lemma \ref{leme4} and  (\ref{eq34}) that
\begin{equation}\label{Eqct1}
c_{\langle t\rangle}(A) \geq \dim{M}=\chi_{\nu_{i_0}}(1)> q^{m^2}.
\end{equation}
Hence, from 
$$
c_t^{grs}(A) = \displaystyle\sum_{\langle \tilde{t}\rangle} \left(\begin{array}{c}
t\\
\langle \tilde{t}\rangle
\end{array}\right)c_{\langle \tilde{t}\rangle}(A) \geq c_{\langle t\rangle}(A)>q^{m^2}=(\widehat{d})^{3m^2},
$$
and $m^2\geq \dfrac{t-1}{2}$ (due to Proposition \ref{prop_w_mon} we have $t \leq 2m^2+1$), one has
$c_t^{grs}(A) > (\widehat{d})^{\frac{3}{2}(t-1)}.$
In view of \ $\dfrac{3}{2}(t-1) > t$, for $t\geq m^2\geq e^4 > 3$, it follows that
$$
c_t^{grs}(A) > (\widehat{d})^t.
$$
We obtain a contradiction with Corollary \ref{corregv}, since $c_n^{grs}(A)\leq (\widehat{d})^n$ for all $n\geq 1$. Therefore, all multiplicity $m_{\langle \mu\rangle}$ in (\ref{=cocara}) must be equal to zero if $\langle \mu\rangle \notin (\mathcal{H}(m, m),\mathcal{H}(m, m), \mathcal{H}(m, m), \mathcal{H}(m, m) )$, and the proof of the theorem is concluded.
 
\end{proof}

\section{Amitsur's Theorem}

As a consequence of the hook theorem (Theorem \ref {teoprin}), we also have an analogue of Amitsur's Theorem for the case of $PI$-superalgebras with superinvolution or graded involution $\#$, which ensures that any $\#$-$PI$-superalgebra satisfies $\#$-superidentities which are the powers of standard polynomials in any type of variables, where the standard polynomial is the well-known polynomial of the form 
$$St_k(x_1,\dots,x_k)=\sum_{\sigma \in S_k} (-1)^{\sigma}
x_{\sigma(1)} \cdots x_{\sigma(k)}.$$

Let us recall the following simple combinatorial fact. 

\begin{lema} \label{lemta}  
Let $\widetilde{\lambda} = (m^k) \vdash n$ be the rectangle partition of $n=km$ and $T_{\widetilde{\lambda}}$ the standard Young tableau associated to $\widetilde{\lambda}$ of the following form

\begin{table}[H]
\centering
$T_{\widetilde{\lambda}}=$
\begin{tabular}{|c|c|c|c|c}
\cline{1-4}
1 & k+1 & $\cdots$ & $(m-1)k+1$ &  \\ \cline{1-4}
2 & k+2 & $\cdots$ & $(m-2)k+2$ &  \\ \cline{1-4}
 $\vdots$ & $\vdots$    & $\vdots$ &  $\vdots$  &  \\ \cline{1-4}
k & 2k  & $\cdots$  & $mk$    &  \\ \cline{1-4}
\end{tabular}.
\end{table}
\noindent Consider an ordinary multilinear polynomial $f(x_1, x_2, \cdots, x_{km})= e_{T_{\widetilde{\lambda}}}(x_1x_2\cdots x_{km})\in P_{km}$. Then, for the polynomial
$$
w(x_1, x_2, \cdots, x_k)= f(x_1, \cdots, x_k, x_1, \cdots, x_k, \cdots, x_1, \cdots, x_k)
$$
obtained of $f$ identifying variables $x_{k+i}=x_{2k+i}= \cdots= x_{(m-1)k+i}=x_i$ for any $i=1,\cdots, k$, we have
$$
w(x_1, x_2, \cdots, x_k)=(m!)^k St_k^m(x_1, \cdots, x_k).
$$
\end{lema}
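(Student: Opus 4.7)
The plan is to expand the essential idempotent
$$
e_{T_{\widetilde{\lambda}}} = \sum_{\sigma \in R_{T_{\widetilde{\lambda}}}} \sum_{\tau \in C_{T_{\widetilde{\lambda}}}} (-1)^\tau \sigma\tau,
$$
and to analyze separately the effect of the identification map $\phi$, defined by $\phi(x_{(j-1)k+i}) = x_i$ for $i = 1,\ldots,k$ and $j = 1,\ldots,m$, on the contributions coming from the row stabilizer $R_{T_{\widetilde{\lambda}}}$ and the column stabilizer $C_{T_{\widetilde{\lambda}}}$. The two crucial combinatorial features of $T_{\widetilde{\lambda}}$ are that row $i$ consists exactly of the positions $\{i, k+i, 2k+i, \ldots, (m-1)k+i\}$, i.e.\ exactly of those positions that $\phi$ collapses to the single variable $x_i$, while column $j$ consists of the block of consecutive positions $\{(j-1)k+1, (j-1)k+2, \ldots, jk\}$.

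First I would dispose of the row-stabilizer sum. For any $\sigma \in R_{T_{\widetilde{\lambda}}}$ and any position $j$, the points $j$ and $\sigma(j)$ lie in the same row, hence $\phi(x_{\sigma(j)}) = \phi(x_j)$. Consequently, applying $\phi$ to $\sum_{\sigma,\tau}(-1)^\tau x_{\sigma\tau(1)}\cdots x_{\sigma\tau(n)}$ kills the dependence on $\sigma$ (each summand equals $\phi(x_{\tau(1)})\cdots \phi(x_{\tau(n)})$), and summing over $\sigma \in R_{T_{\widetilde{\lambda}}}$ merely multiplies by the constant $|R_{T_{\widetilde{\lambda}}}| = (m!)^k$.

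Second I would factor the column-stabilizer sum over columns. Writing $C_{T_{\widetilde{\lambda}}} = C_1 \times \cdots \times C_m$, where $C_j \cong S_k$ acts on the positions of the $j$-th column, and using that column $j$ equals the block $\{(j-1)k+1,\ldots,jk\}$, the monomial $x_1 x_2 \cdots x_n$ breaks up into $m$ disjoint blocks, one per column. The signed sum therefore factors:
$$
\sum_{\tau \in C_{T_{\widetilde{\lambda}}}} (-1)^\tau x_{\tau(1)}\cdots x_{\tau(n)} = \prod_{j=1}^{m} St_k(x_{(j-1)k+1},\, x_{(j-1)k+2},\,\ldots,\, x_{jk}).
$$
After applying $\phi$, each factor becomes $St_k(x_1,\ldots,x_k)$, and the product becomes $St_k^m(x_1,\ldots,x_k)$.

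Combining these two steps yields $w(x_1,\ldots,x_k) = \phi(e_{T_{\widetilde{\lambda}}}(x_1\cdots x_n)) = (m!)^k \, St_k^m(x_1,\ldots,x_k)$, as claimed. I do not expect any real obstacle: the argument is a short bookkeeping verification, and the only subtleties are to keep the conventions for the $S_n$-action on multilinear monomials consistent, and to exploit the very specific column-major numbering of $T_{\widetilde{\lambda}}$, which is tailored so that the row-stabilizer action becomes trivial after $\phi$ and the column-stabilizer action decouples into a product of standard polynomials on disjoint blocks of variables.
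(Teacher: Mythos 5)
Your proof is correct and follows essentially the same route as the paper, which simply remarks that $f$ is symmetric in each row-set of variables $X_i=\{x_i,x_{k+i},\dots,x_{(m-1)k+i}\}$ and leaves the rest implicit. You have merely made the two halves explicit: the row-stabilizer sum collapses to the factor $|R_{T_{\widetilde{\lambda}}}|=(m!)^k$ after the identification, and the column-stabilizer sum factors over the consecutive blocks $\{(j-1)k+1,\dots,jk\}$ into a product of standard polynomials, giving $St_k^m$.
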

\begin{proof}
This lemma is an immediate consequence of the obvious fact that the polynomial $f$ is symmetric in each set of variables
$$ X_1=\{x_1,x_{k+1},\dots,x_{(m-1)k+1}\}, \  \dots, \  X_k=\{x_k,x_{2k},\dots,x_{mk}\}.
$$
\end{proof}

\begin{coro}\label{amitsur1}\textit{(Amitsur's theorem for $\#$-superalgebras)} Let $F$ be a field of characteristic zero and $A$ a superalgebra over $F$ with a superinvolution or a graded involution $\#$. If $A$ is a $PI$-algebra, then there exist integers $d_i,l_i \geq 0$, $i=1,2,3,4$, defined by Theorem \ref{teoprin}, such that $A$ satisfies the $\#$-superidentities 
\begin{eqnarray} \label{AMI}
&&St_{k_1}^{m_1}(y_{0,1},\cdots, y_{0,k_1}) \equiv 0, \qquad
St_{k_2}^{m_2}(z_{0,1},\cdots, z_{0,k_2}) \equiv 0, \nonumber \\
&&St_{k_3}^{m_3}(y_{1,1},\cdots, y_{1,k_3})\equiv 0,  \qquad
St_{k_4}^{m_4}(z_{1,1},\cdots, z_{1,k_4})\equiv 0,
\end{eqnarray}
where $k_i=d_i+1 \geq 1$ and $m_i=l_i+1 \geq 1.$
\end{coro}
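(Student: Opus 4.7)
The plan is to derive the Amitsur-type $\#$-superidentities as a direct consequence of the hook theorem (Theorem \ref{teoprin}) via the correspondence given by Proposition \ref{propetm}, combined with the combinatorial identification of Lemma \ref{lemta}. The underlying idea is standard in ordinary $PI$-theory: placing the rectangle partition $(m_i^{k_i})$ with $k_i=d_i+1$ and $m_i=l_i+1$ in the $i$-th slot of a multipartition (and leaving the other three slots empty) produces a multipartition outside the quadruple hook $\mathcal{H}_{\langle d,l\rangle}\langle n\rangle$, which therefore has zero multiplicity in the $\cn$-th $\#$-cocharacter. This forces a concrete $\#$-superidentity coming from the essential idempotent attached to the rectangle tableau, and Lemma \ref{lemta} collapses that identity into exactly $(m_i!)^{k_i}$ times $St_{k_i}^{m_i}$.

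Fix $i\in\{1,2,3,4\}$, set $k_i=d_i+1$ and $m_i=l_i+1$, and put $\cn=(n_1,n_2,n_3,n_4)$ with $n_i=k_im_i$ and $n_j=0$ for $j\neq i$. Consider the rectangle partition $\widetilde\lambda_i=(m_i^{k_i})\vdash k_im_i$ together with the multipartition $\langle\mu\rangle\vdash\cn$ whose $i$-th component is $\widetilde\lambda_i$ and whose remaining components are the empty partition $(0)$. Since the $(d_i+1)$-th row of $\widetilde\lambda_i$ has length $l_i+1>l_i$, one has $\widetilde\lambda_i\notin\mathcal{H}(d_i,l_i)$, hence $\langle\mu\rangle\notin\mathcal{H}_{\langle d,l\rangle}\langle n\rangle$, and Theorem \ref{teoprin} gives $m_{\langle\mu\rangle}=0$. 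Take $T_{\widetilde\lambda_i}$ to be the standard Young tableau in the special form described in Lemma \ref{lemta}, let $T_{\langle\mu\rangle}$ be the multitableau whose $i$-th entry is $T_{\widetilde\lambda_i}$ (with the other entries being empty), and write $x_{i,j}$ for the variable type matching slot $i$, namely $y_{0,j}$, $z_{0,j}$, $y_{1,j}$ or $z_{1,j}$. Applying Proposition \ref{propetm} to $f=x_{i,1}x_{i,2}\cdots x_{i,k_im_i}\in P_{\cn}$ produces the $\#$-superidentity
$$
e_{T_{\widetilde\lambda_i}}\bigl(x_{i,1}x_{i,2}\cdots x_{i,k_im_i}\bigr)\equiv 0\quad\text{in }A.
$$

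To finish, identify $x_{i,k_i+j}=x_{i,2k_i+j}=\cdots=x_{i,(m_i-1)k_i+j}=x_{i,j}$ for each $j=1,\ldots,k_i$. By Lemma \ref{lemta}, the left-hand side becomes $(m_i!)^{k_i}St_{k_i}^{m_i}(x_{i,1},\ldots,x_{i,k_i})$, and since $char(F)=0$, cancelling the nonzero scalar $(m_i!)^{k_i}$ yields the stated identity. Running this independently for $i=1,2,3,4$ gives all four identities in (\ref{AMI}). The only point that really requires attention, and which I would flag as the main (though mild) obstacle, is checking that the variable identification preserves the $\#$-superidentity property. This is in fact automatic: each substituted variable is replaced by another variable of exactly the same $\mathbb{Z}_2$-degree and the same $\#$-symmetry, so the substitution extends to a graded endomorphism of $\mathcal{F}$ commuting with $\#$, and $Id_2^{\#}(A)$ is, by definition, stable under all such endomorphisms.
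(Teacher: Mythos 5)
Your proposal is correct and follows essentially the same route as the paper: place the rectangle $(m_i^{k_i})$ in the $i$-th slot of a multipartition with empty remaining slots, observe it lies outside the quadruple hook so its multiplicity vanishes by Theorem \ref{teoprin}, extract the identity $e_{T_{(m_i^{k_i})}}(x_{i,1}\cdots x_{i,k_im_i})\equiv 0$ via Proposition \ref{propetm}, and collapse it to $(m_i!)^{k_i}St_{k_i}^{m_i}$ by the variable identification of Lemma \ref{lemta}. Your closing remark that the identification is a graded endomorphism commuting with $\#$ (so $Id_2^{\#}(A)$ is stable under it) is a valid justification of the one step the paper leaves implicit.
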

\begin{proof}
Let $\chi_{\langle n \rangle}(A)= \displaystyle\sum_{\langle \lambda \rangle\vdash n}m_{\langle \lambda \rangle}\chi_{\langle \lambda \rangle}$ be the $\cn$-th $\#$-cocharacter of $A$. By Theorem \ref{teoprin}, there exist non-negative integers $d_i, l_i\geq 0$, $i=1,2,3,4$, such that $m_{\langle\mu\rangle}=0$ if
$$
D_{\langle\mu\rangle} \not\subseteq \mathcal{H}_{\langle d, l \rangle}\langle n \rangle= (\mathcal{H}(d_1, l_1),\mathcal{H}(d_2, l_2), \mathcal{H}(d_3, l_3), \mathcal{H}(d_4, l_4) ).
$$
Taking $k_i= d_i+1$, $m_i=l_i+1,$ and $\mu(i)=(m_i^{k_i})$ the rectangle partition of $n_i=k_i m_i$, let us consider
the multidiagram $D_{\langle \mu \rangle_i} =(\emptyset,\dots,D_{\mu(i)},\dots,\emptyset)$  of the multipartition $\langle \mu \rangle_i=(0,\dots,\mu(i),\dots,0)$ for any fixed $i=1,2,3,4$, where $D_{\mu(j)}=\emptyset$ is the empty diagram of the zero partition $\mu(j)=(0)$ ($n_j=0$ in $\cn$) for all $j \neq i$.

We have that $D_{\mu(i)} \not \subseteq \mathcal{H}(d_i,l_i)$ and, consequently, $D_{{\langle \mu \rangle}_i} \not \subseteq \mathcal{H}_{\langle d, l \rangle}\langle n \rangle$, what implies that $m_{{\langle \mu \rangle}_i}=0.$
Let us fix the following $\#$-supermonomials
$w_1=y_{0,1}\cdots y_{0, n_1}$, $w_2= z_{0,1}\cdots z_{0, n_2}$,  $w_3=  y_{1,1}\cdots y_{1, n_3}$ and $w_4= z_{1,1}\cdots z_{1, n_4}.$
Consider the standard Young tableau $T_{\mu(i)}$  given by Lemma \ref{lemta} for each $i=1,2,3,4$. Since $m_{\langle \mu \rangle_i}=0$, it follows that
\begin{eqnarray*}
f_i=e_{T_{{\langle \mu \rangle}_i}} w_i=e_{T_{\mu(i)}} w_i \in & Id_2^{\#}(A).
\end{eqnarray*}
Observe that the element $e_{T_{{\langle \mu \rangle}_i}}$ acts only in the corresponding 
set of variables, that is, in the set $\mathcal{Y}_{0,n_1}$ for $i=1,$ $\mathcal{Z}_{0,n_2}$ for $i=2$,  $\mathcal{Y}_{1,n_3}$ for $i=3,$ or $\mathcal{Z}_{1,n_4}$ for $i=4.$ Then, $f_i$ depends on  the same set of variables. 
\noindent Let us denote the following $n_i$-tuples ($n_i=k_i m_i$, for $i=1,2,3,4$) by
$$
\widetilde{X}_1=\mathcal{Y}_{0,k_1}^{m_1}= (y_{0,1},\cdots, y_{0,k_1}, \cdots,y_{0,1},\cdots, y_{0,k_1} ),
$$
repeating the variables of the set $X_1=\mathcal{Y}_{0,k_1}=\{y_{0,1},\cdots,y_{0,k_1}\}$  $m_1$ times,
$$
\widetilde{X}_2=\mathcal{Z}_{0,k_2}^{m_2}=(z_{0,1},\cdots, z_{0,k_2}, \cdots,z_{0,1},\cdots, z_{0,k_2} ),
$$
repeating the variables of the set $X_2=\mathcal{Z}_{0,k_2}=\{z_{0,1},\cdots,z_{0,k_2}\}$  $m_2$ times,
$$
\widetilde{X}_3=\mathcal{Y}_{1,k_3}^{m_3}=(y_{1,1},\cdots, y_{1,k_3}, \cdots,y_{1,1},\cdots, y_{1,k_3} ),
$$
repeating the variables of the set $X_3=\mathcal{Y}_{1,k_3}=\{y_{1,1},\cdots,y_{1,k_3}\}$  $m_3$ times and
$$
\widetilde{X}_4=\mathcal{Z}_{1,k_4}^{m_4}=(z_{1,1},\cdots, z_{1,k_4}, \cdots, z_{1,1},\cdots, z_{1,k_4} ),
$$
repeating the variables of the set $X_4=\mathcal{Z}_{1,k_4}=\{z_{1,1},\cdots,z_{1,k_4}\}$  $m_4$ times. By Lemma \ref{lemta}, one obtains
$$
f_i(\widetilde{X}_i)=f_i(X_i,\dots,X_i)=(m_i!)^{k_i} St_{k_i}^{m_i}(X_i) \in Id_2^\#(A),
$$
as a result of the corresponding identification of variables of $f_i.$
Once $char(F)=0$, it follows that
$$
St_{k_i}^{m_i}(X_i) \in Id_2^\#(A)
$$
for any $i=1,\dots,4,$ and the proof of the corollary is concluded. 
\end{proof}

Notice that the principal value of Corollary \ref{amitsur1} is the correspondence of the pairs $(k_i,m_i)$ in (\ref{AMI}) with the parameters $(d_i,l_i)$ given by Theorem \ref{teoprin} for $i=1,\dots,4.$ The fact that a $\#$-$PI$-superalgebra satisfies 4 identities of the form (\ref{AMI}) for some integers $k_i, m_i \geq 1$ is a simple consequence of classic Amitsur's Theorem for ordinary identities (\cite{Am1}; Theorem 4.5.4 in \cite{RV}). Since any $PI$-algebra satisfies an ordinary identity
$St_k^m(x_1,\dots,x_k) \equiv 0$, then any $\#$-$PI$-superalgebra $A$ satisfies also 4 identities
$St_k^m(\mathcal{Y}_{0,k}) \equiv 0$, \ $St_k^m(\mathcal{Z}_{0,k}) \equiv 0$, \ $St_k^m(\mathcal{Y}_{1,k}) \equiv 0$, \ $St_k^m(\mathcal{Z}_{1,k}) \equiv 0$ for $k_1=k_2=k_3=k_4=k,$
and $m_1=m_2=m_3=m_4=m$ (evaluating $X_k=\{ x_1,\dots,x_k \}=\mathcal{Y}_{0,k},$ \  $X_k=\mathcal{Z}_{0,k},$ \ $X_k=\mathcal{Y}_{1,k},$ and \ $X_k=\mathcal{Z}_{1,k},$ respectively).
But the parameters $(d_i,l_i)$ given by Theorem \ref{teoprin} for any particular $i=1,\dots,4$ can be essentially less than the pair $(k,m)$, defined by the minimal ordinary identity of $A$ of the form $St_k^m(x_1,\dots,x_k) \equiv 0.$  It is an interesting question what is the real correspondence between pairs $(k_i,l_i)$ for $i=1,\dots,4$ in Corollary \ref{amitsur1} and the minimal pairs $(k,m),$ defined by the ordinary identities for a given $\#$-$PI$-superalgebra $A$.

For example, Giambruno, Ioppolo and Martino have found in \cite{GIM} the minimal degrees of the corresponding standard $\#$-superidentities of the full matrix superalgebra $M_{k,k}(F)$ over $F$ of the order $2k$ endowed with the elementary $\mathbb{Z}_2$-grading, defined by the $2k$-tuple $(0^k,1^k),$ and transpose superinvolution $trp$. They have proved that $(M_{k,k}(F),trp)$ satisfies the $\#$-superidentities $St_{2k}(y_{0,1},\dots,y_{0,2k}) \equiv 0,$  \   $St_{2k}(z_{0,1},\dots,z_{0,2k}) \equiv 0,$  \  
$St_{2k}(y_{1,1},\dots,y_{1,2k}) \equiv 0,$  \  
$St_{2k}(z_{1,1},\dots,z_{1,2k}) \equiv 0.$  And these are the standard $\#$-superidentities of the minimal degrees for $(M_{k,k}(F),trp)$, i.e., in terms of Corollary \ref{amitsur1}, we have $k_1=k_2=k_3=k_4=2k,$ $m_i=0$ for all $i=1,\dots,4.$ 

It is worth to mention that the minimal degree of the ordinary standard identity of $M_{k,k}(F)$ is $4k$ (\cite{AL,DR,DrenForm,RV}), and much greater than $2k.$ Also the hook of ordinary identities of $M_{k,k}(F)$ is $\mathcal{H}(4k^2,0)$ ($d= 4 k^2,$ $l=0$) (see \cite{RV,Reg}). Unfortunately, the parameters of the quadruple hook $\mathcal{H}_{(d,l)}$ for $\#$-superidentities of $M_{k,k}(F)$ are unknown, although we can prove that $l_1=l_2=l_3=l_4=0$ also.

\section{Examples}

The following examples exhibit both the decomposition of the $\langle n \rangle$-th  $\#$-cocharacter of the considered algebras and the existence of the quadruple hook provided by Theorem \ref{teoprin} for the 2-generated Grassmann algebra $G$ and the infinitely generated Grassmann algebra $E$ with the canonical $\mathbb{Z}_2$-grading and some superinvolutions.

\begin{exem} \label{GGGE1}\rm Consider $G=\langle e_1, e_2 \ | \ e_1e_2= -e_1e_2, \ e_1^2=0, \ e_2^2=0 \rangle_F=\textrm{span}_{F} \{ e_1,e_2,e_1e_2\}$ the $2$-generated non-unitary Grassmann algebra over a field $F$ of characteristic zero ($\dim G=3$), with the canonical $\mathbb{Z}_2$-grading $G=G_0\oplus G_1$, where $G_0=\textrm{span}_{F} \{ e_1e_2\}$, $G_1=\textrm{span}_{F} \{ e_1,e_2\}$. 

Take the superinvolution $\#$ of $G$ defined by $e_i^{\#}= -e_i,$ $i=1, 2$ \  or $$(\alpha_1 e_1+\alpha_2 e_2+\alpha_3 e_1 e_2)^\#=-\alpha_1 e_1-\alpha_2 e_2+\alpha_3 e_1 e_2, \  \  \alpha_i\in F.$$

$G$ can be decomposed as follows:
$$
G= G_{0}^{+ } \oplus G_{0}^{-} \oplus G_{1}^{+ } \oplus G_{1}^{- },
$$
 where $G_{0}^{+ }= G_{0}$, $G_{0}^{-}= \{0\}$, $G_{1}^{+ }= \{0\}$ and $G_{1}^{- }= G_{1}$. $G$ is also a $PI$-algebra, since it is nilpotent of degree $3$. 

One can directly check that all the $\#$-superidentities of $G$ are consequences of the $\#$-superidentities 
\begin{equation} \label{G-ident}
y_{1,1}\equiv 0, \  z_{0,1} \equiv 0, \  y_{0,1} y_{0,2}\equiv 0,  \  z_{1,1} y_{0,1}\equiv 0, \  y_{0,1} z_{1,1}\equiv 0,  \   z_{1,1} z_{1,2}+z_{1,2} z_{1,1}\equiv 0. 
\end{equation}
In particular, these $\#$-superidentities imply $z_{1,i}z_{1,j} \in (\mathcal{F})_{0}^{+},$ and, thus, the ordinary nilpotency of degree 3 of $G$ is also a consequence of (\ref{G-ident}).
Note that $G$ does not satisfies $y_{0,1} \equiv 0,$ \ $z_{1,1} \equiv 0,$ \ $z_{1,1} z_{1,2} \equiv 0,$ \  $[z_{1,1} z_{1,2}] =  z_{1,1} z_{1,2} -z_{1,1} z_{1,2} \equiv 0.$
The description of $\cn$-th $\#$-cocharacters of $G$ immediately follows from $\#$-superidentities (\ref{G-ident}).
\begin{itemize}
\item $\chi_{\cn}(G)=0, \quad c_{\cn}(G)=0, \quad P_{\cn}\cap Id_2^{\#}(G)= P_{\cn}
\\
\mbox{ for all } \cn=(n_1,n_2,n_3,n_4), \ \mbox{ with } n=n_1+n_2+n_3+n_4 \geq 3 \ \   \mbox{ or } \  n_2+n_3>0;$
\item $\chi_{\cn}(G)=0, \quad c_{\cn}(G)=0, \quad P_{\cn}\cap Id_2^{\#}(G)= P_{\cn} \\
\mbox{ if }  \ \  \cn=(2,0,0,0) \  \  \mbox{ or } \ \ 
\cn=(1,0,0,1);$
\item $\chi_{(1,0,0,0)}(G)= \chi_{(\yng(1),\emptyset,\emptyset,\emptyset)}\subseteq (\mathcal{H}(1,0),\mathcal{H}(0,0),\mathcal{H}(0,0),\mathcal{H}(0,0)), \\
\! \ \  c_{(1,0,0,0)}(G)= 1, \quad P_{(1,0,0,0)}=\text{span}_{F} \{ y_{0,1}\},  \quad P_{(1,0,0,0)} \cap Id_2^{\#}(G)= \{0\};$
\item $\chi_{(0,0,0,1)}(G)= \chi_{(\emptyset,\emptyset,\emptyset,\yng(1))}\subseteq (\mathcal{H}(0,0),\mathcal{H}(0,0),\mathcal{H}(0,0),\mathcal{H}(1,0)), \\
\! \ \  c_{(0,0,0,1)}(G)= 1, \quad P_{(0,0,0,1)}=\text{span}_{F} \{ z_{1,1}\}, \quad P_{(0,0,0,1)}\cap Id_2^{\#}(G)= \{0\};$
\item $\chi_{( 0,0,0,2)}(G)=\chi_{(\emptyset, \emptyset, \emptyset, \yng(1,1))},
\\ 
\! \ \  c_{(0,0,0,2) }(G)=1, \quad P_{ (0,0,0,2)} \cap Id_2^{\#}(G)= \text{span}_{F} \{ z_{1,1}z_{1,2} +z_{1,2}z_{1,1} \}.$
\end{itemize}

We observe that it is possible to choose either
$\chi_{( 0,0,0,2)}(G)\subseteq (\mathcal{H}(0,0),\mathcal{H}(0,0),\mathcal{H}(0,0),\mathcal{H}(0,1)),$ \ or \  $\chi_{( 0,0,0,2)}(G)\subseteq (\mathcal{H}(0,0),\mathcal{H}(0,0),\mathcal{H}(0,0),\mathcal{H}(2,0)).$

In short, we have, for all $\langle n\rangle = (n_1,n_2,n_3,n_4)$ 
$$
\chi_{\cn}(G) = \dis\sum_{\langle\lambda\rangle \vdash \cn \atop \langle\lambda\rangle \in \mathcal{H}_{\langle d, l \rangle}\langle n \rangle} m_{\langle\lambda\rangle}\chi_{\langle\lambda\rangle},
$$
where $\mathcal{H}_{\langle d, l \rangle}\langle n \rangle=(\mathcal{H}(1,0),\mathcal{H}(0,0),\mathcal{H}(0,0),\mathcal{H}(0,1))$, i.e. $\langle d, l \rangle=(1,0;0,0;0,0;0,1),$ or we also can assume $\mathcal{H}_{\langle d, l \rangle}\langle n \rangle=(\mathcal{H}(1,0),\mathcal{H}(0,0),\mathcal{H}(0,0),\mathcal{H}(2,0))$, i.e. $\langle d, l \rangle=(1,0;0,0;0,0;2,0).$
\end{exem}

\begin{exem}
Let $E = \langle 1, e_1, e_2, \cdots \ | \ e_ie_j = -e_je_i, \ \forall \ i,j \geq 1 \rangle_F$ be the infinite-dimensional unitary Grassmann algebra over a field $F$ of characteristic zero with its canonical grading $E = E_0 \oplus E_1$, where $E_0$ is the space generated by monomials in the $e_i$'s of even length and $E_1$ is the space generated by monomials in the $e_i$'s of odd length. Consider $\#:E \to E$ the superinvolution given by $e_i^\# = -e_i$ (see \cite{alj1}). Since $\mathrm{char}(F) = 0$, $E$ can be decomposed as $E = E_0^+ \oplus E_0^- \oplus E_1^+ \oplus E_1^-$, where $E_i^+$ is the space formed by all elements $g_i \in E_i$ such that $g_i^\# = g_i$ for $i = 0, 1$ and $E_j^-$ is formed by elements $g_j \in E_j$ satisfying $g_j^\# = -g_j$ for $j = 0, 1$. Then $E_0^+ = E_0$, $E_0^- = \{0\}$, $E_1^+ = \{0\}$, and $E_1^- = E_1$. Therefore, $E$ satisfies the $\#$-superidentities $z_{0,1} = 0$ and $y_{1,1} = 0$ and, moreover, with this structure, one can adapt the results of \cite{gmz1} to determine the set of other $\#$-superidentities of $E$ and describe the decomposition of the $\#$-cocharacter. Specifically, the results of \cite{gmz1} imply that, assuming $n_2=n_3=0$ and fixing $n_1 + n_4 = n \geq 1$,  we have
$$
\chi_{\cn}(E) = \chi_{((n_1), \emptyset, \emptyset, (1^{n_4}) )} = \chi_{(n_1)} \otimes \chi_{\emptyset} \otimes \chi_{\emptyset} \otimes \chi_{(1^{n_4})}
$$
which is contained in the quadruple hook
$$
(\mathcal{H}(1,0),\mathcal{H}(0,0),\mathcal{H}(0,0),\mathcal{H}(0,1)),
$$
$\chi_{\cn}(E) = 0$ and $c_{\cn}(E) = 0$ for all $\cn = (n_1,n_2,n_3,n_4)$ with $n_2 \geq 1$ or $n_3 \geq 1$, 
and
$$
Id_2^\#(E) = \langle [y_{0,1},y_{0,2}], [y_{0,1},z_{1,2}], z_{1,1}z_{1,2} + z_{1,2}z_{1,1},z_{0,1}, y_{1,1} \rangle_{T_2^{\#}}.
$$
Furthermore, $c_{\cn}(E) = 1$ by the first part of the proof of Proposition 3 in \cite{gmz1}.
\end{exem}

\section{Polynomials of Amitsur for $\#$-Superalgebras}

Let $A$ be a  $\#$-superalgebra. If $A$ is a $PI$-algebra, we know from Theorem \ref{teoprin} that there exist integers $d_i, l_i\geq 0$, with $i= 1,2,3, 4$, such that the $\cn$-th cocharacter of $\#$-superidentities of $A$, $\chi_{\cn}(A)$, is contained in a quadruple hook 
$$
\mathcal{H}_{\langle d,l\rangle}= (\mathcal{H}(d_1, l_1),\mathcal{H}(d_2, l_2), \mathcal{H}(d_3, l_3), \mathcal{H}(d_4, l_4) ),
$$
where $\langle d,l\rangle=(d_1,l_1;d_2,l_2;d_3,l_3;d_4,l_4)$.

Fix some integers $d_i,l_i\geq 0$ for $i=1,2,3,4$. Consider $\langle \hat{\lambda}\rangle=(\hat{\lambda}(1),\hat{\lambda}(2),\hat{\lambda}(3),\hat{\lambda}(4))\vdash \langle k\rangle = (k_1,k_2,k_3,k_4)$, where $\hat{\lambda}(i)=(l_i+1)^{d_i+1}$ is the rectangle partition of $k_i=(l_i+1)(d_i+1)$, $i=1,2,3,4$. Also let us consider the element $\mathcal{E}_{\langle\hat{\lambda}\rangle}=\mathcal{E}_{\hat{\lambda}(1)} \otimes \mathcal{E}_{\hat{\lambda}(2)} \otimes \mathcal{E}_{\hat{\lambda}(3)} \otimes \mathcal{E}_{\hat{\lambda}(4)} \in FS_{\langle k \rangle}$, where
$$\mathcal{E}_{\hat{\lambda}(i)} = \displaystyle \sum_{\sigma_i\in S_{k_i}}\chi_{\hat{\lambda}(i)}(\sigma_i)\sigma_i\in F{S_{k_i}}, \ \ i=1,2,3,4.
$$

Let us recall that the element $\mathcal{E}_{\hat{\lambda}(i)}$ is an essential central idempotent of $FS_{k_i}$, and $I_{\hat{\lambda}(i)}=FS_{k_i}\mathcal{E}_{\hat{\lambda}(i)}$ is the minimal two-sided ideal of $FS_{k_i}$ corresponding to the Young diagram of the shape $\hat{\lambda}(i)$ (see, for instance, \cite{RV,jlieb}),  for $i=1,2,3,4$. 

Then by Proposition \ref{bilat-id}, the element $\mathcal{E}_{\langle\hat{\lambda}\rangle}$ also is the essential central idempotent of $FS_{\langle k \rangle}$ and $FS_{\langle k\rangle} \mathcal{E}_{\langle \hat{\lambda} \rangle}$ is the minimal two-sided ideal of $FS_{\langle k\rangle}$,  which corresponds to the multipartition $\langle \hat{\lambda}\rangle \vdash \langle k \rangle$.

Let us consider the collection of $\#$-superpolynomials of Amitsur of rank \\ 
$\mbox{}$ \quad $\langle d,l\rangle=(d_1,l_1; d_2, l_2;d_3, l_3; d_4,l_4)$ defined by:
\begin{eqnarray} \label{CapSup1}
&&E_{1,(d_1, l_1)}=Am_{(d_1, l_1)}(y_{0,1},\cdots,y_{0,k_1};\bar{x}_{\langle
k_1+1 \rangle}) = \dis\sum_{\sigma\in S_{k_1}} \chi_{\hat{\lambda}(1)}(\sigma) x_1 y_{0,\sigma(1)}x_2\cdots  x_{k_1} y_{0,\sigma(k_1)}x_{k_1+1}, \nonumber
\\
&&E_{2,(d_2, l_2)}=Am_{(d_2, l_2)}(z_{0,1},\cdots,z_{0,k_2};\bar{x}_{\langle k_2+1 \rangle}) = \dis\sum_{\sigma\in S_{k_2}} \chi_{\hat{\lambda}(2)}(\sigma) x_1 z_{0,\sigma(1)}x_2\cdots  x_{k_2} z_{0,\sigma(k_2)}x_{k_2+1}, \nonumber
\\
&&E_{3,(d_3, l_3)}=Am_{(d_3, l_3)}(y_{1,1},\cdots,y_{1,k_3};\bar{x}_{\langle k_3+1 \rangle}) = \dis\sum_{\sigma\in S_{k_3}} \chi_{\hat{\lambda}(3)}(\sigma) x_1 y_{1,\sigma(1)}x_2\cdots  x_{k_3} y_{1,\sigma(k_3)}x_{k_3+1}, \nonumber
\\
&&E_{4,(d_4, l_4)}=Am_{(d_4, l_4)}(z_{1,1},\cdots,z_{1,k_4};
\bar{x}_{\langle k_4+1 \rangle}) = \dis\sum_{\sigma\in S_{k_4}} \chi_{\hat{\lambda}(4)}(\sigma) x_1 z_{1,\sigma(1)}x_2\cdots x_{k_4} z_{1,\sigma(k_4)}x_{k_4+1}, \nonumber  \\
&&\ \ \ \ \ \ \ \mbox{\ \ \  for all } \  x_j \in \{ y_{0,k_1+j}, z_{0,k_2+j}, y_{1,k_3+j}, z_{1,k_4+j}, 1\}, \ j=1,\dots,\widehat{k}+1, \ \widehat{k}=\max_{i=1,\dots,4} k_i,\\
&&\mbox{ } \qquad \quad \bar{x}_{\langle s \rangle}=(x_1,\cdots,x_{s}), \quad s=k_i+1, \ \ i=1, 2, 3, 4, \nonumber
\end{eqnarray}
and \qquad \qquad
$Am_{(d,l)}(a_1, ..., a_k;b_1, ..., b_{k+1})= \dis\sum_{\sigma \in S_{k}}\chi_{\hat{\lambda}}(\sigma)b_1a_{\sigma(1)}b_2...b_ka_{\sigma(k)}b_{k+1}$ \\
is the classical Amitsur polynomial, which corresponds to the partition $\hat{\lambda} = (l+1)^{d+1} \vdash k$ with $k=(l+1)(d+1)$ (see \cite{SAR, RV}). We also call the variables in $Y_{0}$ for the $\#$-superpolynomial $E_{1,(d_1,l_1)}$; the variables in $Z_{0}$ for the $\#$-superpolynomial $E_{2,(d_2,l_2)}$; the variables in $Y_{1}$ for the $\#$-superpolynomial $E_{3,(d_3,l_3)}$; and the variables in $Z_{1}$ for the $\#$-superpolynomial $E_{4,(d_4,l_4)}$ by special variables.

\begin{defi} \label{deficapelli1}
Let $A$ be a superalgebra with a superinvolution or a graded involution $\#$. We say that $A$ satisfies Amitsur $\#$-superidentity of rank $\langle d,l\rangle=(d_1,l_1; d_2, l_2;d_3, l_3; d_4,l_4)$, where $d_i,l_i\geq 0$, $i=1,2,3,4$, if $A$ satisfies the collection of all $\#$-superidentities of the form
$E_{i,(d_i,l_i)} \equiv 0$ for $i=1,2,3,4$, as defined in (\ref{CapSup1}), and the variables $x_j$ can be replaced in all possible ways by the elements of the set $\{ y_{0,k_1+j}, z_{0,k_2+j}, y_{1,k_3+j}, z_{1,k_4+j}, 1\}, \ j=1,\dots,\widehat{k}+1, \ \widehat{k}=\max_{i=1,\dots,4} k_i$ (we assume that the variable $x_j$ is omitted if the choice $x_j=1$ is made).

In this case, we say that $A$ satisfies $E_{\langle d,l\rangle}$ ($E_{\langle d,l\rangle}\subseteq Id_2^\#(A)$).
\end{defi}

Observe that $E_{\langle d,l\rangle}$ are analogues of the classic Amitsur polynomial $Am_{(d,l)}$ defined in \cite{SAR} (see also Theorem 4.7.2 in \cite{RV}) for $\#$-superidentities. Note also that in the classic case of ordinary identities, any Capelli polynomial $Cap_{k}$ is the partial case of the Amitsur polynomial $Am_{(k,0)}$ when $l=0.$ 

The following theorem presents the translation of the hook theorem (Theorem \ref{teoprin}) on the language of $\#$-superpolynomials for the case of $\#$-superidentities and generalizes the classic theorem about Amitsur polynomials (\cite{SAR}) in a natural way for $\#$-superidentities.

\begin{teo} \label{AmPol}
Let $A$ be a superalgebra with a superinvolution or a graded involution $\#$, 
$
\chi_{\cn}(A) = \dis\sum_{\langle\lambda\rangle\vdash \cn}m_{\langle\lambda\rangle}\chi_{\langle\lambda\rangle}
$
its $\cn$-th $\#$-cocharacter. If $A$ is a $PI$-algebra, then $A$ satisfies $E_{\langle d,l\rangle}$ (the Amitsur $\#$-superidentity of rank $\langle d,l\rangle$) for some $\langle d,l\rangle=(d_1,l_1;d_2,l_2;d_3,l_3;d_4,l_4)$ if, and only if, $m_{\langle\lambda\rangle} = 0$ whenever $\langle\lambda\rangle \notin \mathcal{H}_{\langle d,l\rangle}$.
\end{teo}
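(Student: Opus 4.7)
The starting observation is that each Amitsur $\#$-superpolynomial $E_{i_0,(d_{i_0},l_{i_0})}$ is precisely $\mathcal{E}_{\hat{\lambda}(i_0)}\cdot w$, where $w = x_1 a_1 x_2 \cdots a_{k_{i_0}} x_{k_{i_0}+1}$ is the alternating multilinear $\#$-supermonomial with type-$i_0$ special variables $a_j$, and $\mathcal{E}_{\hat{\lambda}(i_0)}$ acts (in the $i_0$-th tensor factor of $FS_{\cn}$) by permuting the indices of the $a_j$. Combining this with a block-decomposition argument analogous to the one in the proof of Proposition~\ref{prop_w_mon} (writing each non-empty block between consecutive special variables as the sum of its symmetric and skew $\mathbb{Z}_2$-homogeneous parts and substituting for single variables of the appropriate type), I would first establish the equivalence: $A$ satisfies $E_{\langle d,l\rangle}$ if and only if, for every $i_0 = 1,\dots,4$ and every multilinear $\#$-superpolynomial $g \in P_{\cn}$ with at least $k_{i_0}$ variables of type $i_0$, one has $\mathcal{E}_{\hat{\lambda}(i_0)}\,g \in Id_2^{\#}(A)$.

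For the direction $(\Rightarrow)$, assuming $A$ satisfies $E_{\langle d,l\rangle}$, I suppose by contradiction that $m_{\langle\mu\rangle} \neq 0$ for some $\langle\mu\rangle \notin \mathcal{H}_{\langle d,l\rangle}$. Then $\mu(i_0) \supseteq \hat{\lambda}(i_0)$ for some $i_0$. Proposition~\ref{propetm} produces a multitableau $T_{\langle\mu\rangle}$ and $f \in P_{\cn}$ with $e_{T_{\langle\mu\rangle}} f \notin Id_2^{\#}(A)$. Choosing $T_{\mu(i_0)}$ so that the entries $1,\dots,k_{i_0}$ fill the subtableau of shape $\hat{\lambda}(i_0)$, Lemma~\ref{Lem 2} yields $e_{T_{\mu(i_0)}} = a\,e_{T_{\hat{\lambda}(i_0)}}\,b$ for some $a,b \in FS_{n_{i_0}}$. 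Using that the remaining tensor factors act independently and that $Id_2^{\#}(A)$ is an $FS_{\cn}$-module, I obtain $g \in P_{\cn}$ with $e_{T_{\hat{\lambda}(i_0)}} g \notin Id_2^{\#}(A)$. Proposition~\ref{bilat-id} gives $e_{T_{\hat{\lambda}(i_0)}} = c\,\mathcal{E}_{\hat{\lambda}(i_0)}$ for some $c \in FS_{k_{i_0}}$, so $\mathcal{E}_{\hat{\lambda}(i_0)} g \notin Id_2^{\#}(A)$, contradicting the equivalence above.

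For $(\Leftarrow)$, assume $m_{\langle\lambda\rangle} = 0$ for all $\langle\lambda\rangle \notin \mathcal{H}_{\langle d,l\rangle}$. Fix $i_0$, a choice of types for the $x_j$'s, and let $f = \mathcal{E}_{\hat{\lambda}(i_0)} w \in P_{\cn}$ be the resulting instance of $E_{i_0,(d_{i_0},l_{i_0})}$. By Lemma~\ref{Decomp}, $f \in Id_2^{\#}(A)$ if and only if $e_{T_{\langle\mu\rangle}} f \in Id_2^{\#}(A)$ for every $\langle\mu\rangle \vdash \cn$ and every $T_{\langle\mu\rangle}$. If $\langle\mu\rangle \notin \mathcal{H}_{\langle d,l\rangle}$, then $m_{\langle\mu\rangle} = 0$ and Proposition~\ref{propetm} gives $e_{T_{\langle\mu\rangle}} f \in Id_2^{\#}(A)$. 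If $\langle\mu\rangle \in \mathcal{H}_{\langle d,l\rangle}$, then $\mu(i_0) \not\supseteq \hat{\lambda}(i_0)$, and it suffices to establish the auxiliary identity $e_{T_{\mu(i_0)}}\,\mathcal{E}_{\hat{\lambda}(i_0)} = 0$ in $FS_{n_{i_0}}$; combined with the fact that the other tensor factors of $e_{T_{\langle\mu\rangle}}$ commute with $\mathcal{E}_{\hat{\lambda}(i_0)}$, this forces $e_{T_{\langle\mu\rangle}} f = 0$ already in the free algebra.

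The main obstacle is this last auxiliary fact: $e_{T_{\mu(i_0)}}\,\mathcal{E}_{\hat{\lambda}(i_0)} = 0$ whenever $\mu(i_0) \not\supseteq \hat{\lambda}(i_0)$. It asserts that the two-sided ideal generated in $FS_{n_{i_0}}$ by $\mathcal{E}_{\hat{\lambda}(i_0)} \in FS_{k_{i_0}} \subseteq FS_{n_{i_0}}$ decomposes into minimal two-sided ideals $I_\nu$ only for $\nu \vdash n_{i_0}$ with $\nu \supseteq \hat{\lambda}(i_0)$. The inclusion $\supseteq$ follows from Lemma~\ref{Lem 2} (every $e_{T_\nu}$ with $\nu \supseteq \hat{\lambda}(i_0)$ factors through $e_{T_{\hat{\lambda}(i_0)}} \in FS_{k_{i_0}}\,\mathcal{E}_{\hat{\lambda}(i_0)}$), while the reverse inclusion relies on the classical Pieri/branching rule applied to $\mathrm{Ind}_{S_{k_{i_0}} \times S_{n_{i_0}-k_{i_0}}}^{S_{n_{i_0}}}(\chi_{\hat{\lambda}(i_0)} \otimes \chi_{(n_{i_0}-k_{i_0})})$. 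A secondary care is needed in $(\Rightarrow)$, where the block decomposition must correctly track $\mathbb{Z}_2$-homogeneous degrees and the interaction with the superinvolution versus the graded involution on non-empty blocks between consecutive special variables; this bookkeeping is essentially identical to the one carried out in Proposition~\ref{prop_w_mon}.
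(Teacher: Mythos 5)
Your proposal is correct and follows essentially the same route as the paper: the forward direction is the paper's argument almost verbatim (Proposition \ref{propetm}, Lemma \ref{Lem 2}, factoring $e_{T_{\hat{\lambda}(i_0)}}$ through $\mathcal{E}_{\hat{\lambda}(i_0)}$, then the symmetric/skew block decomposition as in Proposition \ref{prop_w_mon}), while your converse merely repackages the paper's branching-rule computation of the character of $FS_{\langle t\rangle_i}E_{i,(d_i,l_i)}$ as the equivalent statement $e_{T_{\mu(i_0)}}\mathcal{E}_{\hat{\lambda}(i_0)}=0$ whenever $\mu(i_0)\not\supseteq\hat{\lambda}(i_0)$, combined with Lemma \ref{Decomp}. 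One small precision: that auxiliary fact requires the support of the full left ideal $FS_{n_{i_0}}\mathcal{E}_{\hat{\lambda}(i_0)}$, i.e.\ the Littlewood--Richardson rule applied to $\mathrm{Ind}_{S_{k_{i_0}}\times S_{n_{i_0}-k_{i_0}}}^{S_{n_{i_0}}}\bigl(\chi_{\hat{\lambda}(i_0)}\otimes FS_{n_{i_0}-k_{i_0}}\bigr)$ with the regular character of $S_{n_{i_0}-k_{i_0}}$, not just Pieri's rule with the trivial character $\chi_{(n_{i_0}-k_{i_0})}$ as you wrote.
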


\begin{proof}
Firstly, suppose that $A$ satisfies $E_{\langle d,l\rangle}$ (the Amitsur $\#$-superidentity of rank $\langle d,l\rangle$) for some  $\langle d,l\rangle$, with $d_i,l_i\geq 0$, for $i=1,2,3,4$. By contradiction, assume that there exists a multipartition \((\lambda(1),\lambda(2),\lambda(3),\lambda(4))=\langle \lambda \rangle \vdash \langle n\rangle\), where \(n = n_1 + n_2 + n_3 + n_4\) with \(n_i \geq 0\) for \(i = 1, 2, 3, 4\), such that \(m_{\langle \lambda \rangle} \neq 0\) and \(\langle \lambda \rangle \notin \mathcal{H}_{\langle d,l\rangle}\). By Proposition \ref{propetm}, there exist a multilinear $\#$-superpolynomial $f=f(\mathcal{Y}_{0,n_1},\mathcal{Z}_{0,n_2},\mathcal{Y}_{1,n_3},\mathcal{Z}_{1,n_4})\in P_{\cn}$ and a multitableau $T_{\langle \lambda \rangle}$ of the shape $\langle\lambda\rangle$ such that $e_{T_{\langle\lambda\rangle}}f\notin Id_2^\#(A)$. Also, there exists $i\in\{1,2,3,4\}$ such that $\lambda(i) \vdash n_i$ satisfies $\lambda(i) \notin \mathcal{H}(d_i,l_i)$, where $n_i\geq (d_i+1)(l_i+1)$. Without loss of generality, we may assume that $i=1$. Therefore, 
\[
e_{T_{\lambda(1)}}\cdot w_1 y_{0,i_1}w_2 \cdots y_{0,i_{n_1}}w_{n_1+1} \notin Id_2^{\#}(A), \]
where the essential idempotent $e_{T_{\lambda(1)}} \in FS_{n_1}$ acts in the variables $\mathcal{Y}_{0, n_1}$, the indices $i_1, ..., i_{n_1}$ are the entries of the Young tableau $T_{\lambda(1)}$ and $w_j$'s are the monomials, possibly empty, in variables $X\setminus \mathcal{Y}_{0}$ such that $w_1y_{0,i_1}w_2 \cdots y_{0,i_{n_1}}w_{n_1+1}$ is a multilinear $\#$-supermonomial, obtained following the same arguments contained in the proof of Proposition \ref{prop_w_mon}, choosing corresponding monomials of considered $\#$-superpolynomials, considering the left $FS_{\cn}$-module structure of $Id_2^{\#}(A),$ using the structure and applying the properties of the element $e_{T_{\langle\lambda\rangle}} \in FS_{\cn}$, separating the multiplier 
$e_{T_{\lambda(1)}},$ and regrouping indeterminates.

Since $\lambda(1) \notin \mathcal{H}(d_1,l_1)$, $\lambda(1)$ contains the rectangle $\hat{\lambda}(1)=(l_1+1)^{d_1+1} \vdash (l_1+1)(d_1+1)=k_1$ and let $T_{\hat{\lambda}(1)}$ be the corresponding subtableau of $T_{\lambda(1)}$. This means that $D_{\lambda(1)}\supseteq D_{\hat{\lambda}(1)}$. After renaming the variables, we may clearly assume that $T_{\hat{\lambda}(1)}$ is filled up with the integers $1, \dots, k_1$.

Then we decompose $C_{T_{\lambda(1)}}\subseteq S_{n_1}$ into the union of the right cosets of its subgroup $C_{T_{\hat{\lambda}(1)}}$, and $R_{T_{\lambda(1)}}\subseteq S_{n_1}$ into the union of the left cosets of $R_{T_{\hat{\lambda}(1)}}.$ It follows from Lemma \ref{Lem 2} that $e_{T_{\lambda(1)}} = \alpha e_{T_{\hat{\lambda}(1)}}\beta$ for some $\alpha,\beta \in FS_{n_1}$ and, consequently, $e_{T_{\lambda(1)}}\cdot w_1 y_{0,i_1}w_2 \cdots y_{0,i_{n_1}}w_{n_1+1}$ is a linear combination of $\#$-superpolynomials of the type
$$
 \alpha \  \sum_{\sigma \in R_{T_{\hat{\lambda}(1)}} \atop \tau \in C_{T_{\hat{\lambda}(1)}}} (\operatorname{sgn} \tau) \hat{w}_1 y_{0,\sigma\tau\psi(1)} \hat{w}_2 \cdots \hat{w}_{k_1} y_{0,\sigma\tau\psi(k_1)} \hat{w}_{k_1+1},
$$
where $\hat{w}_1, \dots, \hat{w}_{k_1+1}$ are (eventually trivial) $\#$-supermonomials in the remaining variables in the set $(X\setminus \mathcal{Y}_{0,n_1})\cup\{y_{0,\psi(k_1+1)}, \dots, y_{0,\psi(n_1)}\},$ \  $\psi\in S_{n_1}$ and the tableau $T_{\hat{\lambda}_1}$ is filled up with the integers $\{ \psi(1), \dots, \psi(k_1) \}.$ Moreover, at least one of the $\#$-superpolynomials 
$$
\sum_{\sigma \in R_{T_{\hat{\lambda}(1)}} \atop \tau \in C_{T_{\hat{\lambda}(1)}}} (\operatorname{sgn} \tau) \hat{w}_1 y_{0,\sigma\tau\psi(1)} \hat{w}_2 \cdots \hat{w}_{k_1} y_{0,\sigma\tau\psi(k_1)} \hat{w}_{k_1+1},
$$
is not a $\#$-superidentity of $A$. By multiplying on the left by $\psi^{-1}$, we deduce that
\begin{eqnarray*}
&&\psi^{-1} \sum_{\sigma \in R_{T_{\hat{\lambda}(1)}} \atop \tau \in C_{T_{\hat{\lambda}(1)}}} (\operatorname{sgn} \tau) \hat{w}_1 y_{0,\sigma\tau\psi(1)} \hat{w}_2 \cdots \hat{w}_{k_1} y_{0,\sigma\tau\psi(k_1)} \hat{w}_{k_1+1}
\end{eqnarray*}
\begin{eqnarray} \label{betar1}
&&= (\psi^{-1} e_{T_{\hat{\lambda}(1)}} \psi)(\bar{w}_1 y_{0,1} \bar{w}_2 \cdots \bar{w}_{k_1} y_{0,k_1} \bar{w}_{k_1+1}) \notin Id_2^{\#}(A),  
\end{eqnarray}
because $Id_2^{\#}(A)$ is a left $FS_{\langle n\rangle}$-module and  particularly, is a left $FS_{n_1}$-module, where $\bar{w}_j$ in (\ref{betar1}) are multilinear $\#$-supermonomials, obtained by renaming (by $\psi^{-1}$) the variables $\{y_{0,\psi(k_1+1)}, \dots, y_{0,\psi(n_1)}\}$ of $\hat{w}_j$ into the variables $\{y_{0,k_1+1}, \dots, y_{0,n_1}\},$ respectively.

Recalling that the element $\mathcal{E}_{\hat{\lambda}(1)} \in FS_{k_1}$ generates $I_{\hat{\lambda}(1)}$ as a left ideal (see, for example, Proposition 2.2.2, p. 47, in \citep{RV}). Observe that the tableau $\psi^{-1} T_{\hat{\lambda}(1)}$ is filled up with the integers $\{ 1, \dots, k_1 \}.$ Hence, we have  $\psi^{-1} e_{T_{\hat{\lambda}(1)}} \psi = e_{\psi^{-1} T_{\hat{\lambda}(1)}} = b \in FS_{k_1}.$ Since $b$ is an essential idempotent of $FS_{k_1}$ corresponding to $\hat{\lambda}(1)$, it follows that $b \in I_{\hat{\lambda}(1)}$. Therefore, there exists an element $a = \sum_{\rho \in S_{k_1}} \gamma_\rho \rho \in FS_{k_1}$ such that $a \mathcal{E}_{\hat{\lambda}(1)} = b$. Hence,
(\ref{betar1}) implies
\begin{eqnarray*}
&&b \cdot (\bar{w}_1 y_{0,1} \bar{w}_2 \cdots \bar{w}_{k_1} y_{0,k_1} \bar{w}_{k_1+1}) = a \  \mathcal{E}_{\hat{\lambda}(1)}  \cdot (\bar{w}_1 y_{0,1} \bar{w}_2 \cdots \bar{w}_{k_1} y_{0,k_1} \bar{w}_{k_1+1}) \notin Id_2^{\#}(A). 
\end{eqnarray*}
Thus, we have 
\begin{eqnarray}\label{betar2}
&&\mathcal{E}_{\hat{\lambda}(1)}  \cdot (\bar{w}_1 y_{0,1} \bar{w}_2 \cdots \bar{w}_{k_1} y_{0,k_1} \bar{w}_{k_1+1}) \notin Id_2^{\#}(A),
\end{eqnarray}
since $Id_2^{\#}(A)$ is a left $FS_{n_1}$-module and $k_1 \leq n_1$. Note that the element $\mathcal{E}_{\hat{\lambda}(1)} \in FS_{k_1}$ acts on variables
$y_{0,1}, \dots, y_{0,k_1}$. 

Once $\bar{w}_j's$ are multilinear $\#$-supermonomials in the variables belonging to the set $(X\setminus \mathcal{Y}_{0,n_1})\cup\{y_{0,k_1+1}, \dots, y_{0,n_1}\},$ we have that $\bar{w}_j's$ are $\mathbb{Z}_2$-homogeneous, and
denote $\deg \bar{w}_j=\gamma_j \in \{ 0, 1 \}.$
When we evaluate the variables of a $\#$-supermonomial $\bar{w}_j$ in the elements of $A_0^+\cup A_0^-\cup A_1^+\cup A_1^-,$ we obtain an evaluation $\widetilde{w}_j$ of $\bar{w}_j$ such that $\widetilde{w}_j$ is also $\mathbb{Z}_2$-homogeneous element with $deg \ \widetilde{w}_j = deg \ \bar{w}_j = \gamma_j$, for all $j=1,2,\cdots, k_1+1$. Besides that, we may decompose $\widetilde{w}_j$ in $\widetilde{w}_j=\tilde{u}_j^{\gamma_j} + \tilde{v}_j^{\gamma_j}$, where $\tilde{u}_j^{\gamma_j}\in A_{\gamma_j}^+$ and $\tilde{v}_j^{\gamma_j}\in A_{\gamma_j}^-$ are the symmetric and skew parts of $\widetilde{w}_j$, respectively. Replacing in (\ref{betar2}) the nonempty $\#$-supermonomials $\bar{w}_j$ by $y_{{\gamma_j},k_1+j}+z_{{\gamma_j},k_1+j}$, for $j=1,2,\cdots,k_1+1$, we obtain a new $\#$-superpolynomial of the form
\begin{eqnarray} \label{bet3}
&&\mbox{} \quad  \mathcal{E}_{\hat{\lambda}(1)} \left[(y_{{\gamma_1},k_1+1}+z_{{\gamma_1},k_1+1}) y_{0,1} (y_{{\gamma_2},k_1+2}+z_{{\gamma_2},k_1+2}) y_{0,2} \cdots y_{0,k_1} (y_{{\gamma_{k_1+1}}, 2k_1+1}+z_{{\gamma_{k_1+1}}, 2k_1+1})\right].
\end{eqnarray}
Notice that some sums of variables $y_{{\gamma_j},k_1+j}+z_{{\gamma_j},k_1+j}$ can be absent in this product, if the corresponding $\#$-supermonomial $\bar{w}_j$ in (\ref{betar2}) is empty. It is clear that the $\#$-superpolynomial (\ref{betar2}) is obtained from (\ref{bet3}) by the replacement $y_{{\gamma_j},k_1+j}=\bar{u}_j^{\gamma_j}$ and $z_{{\gamma_j},k_1+j}=\bar{v}_j^{\gamma_j}$ in $\mathcal{F},$ where $\bar{u}_j^{\gamma_j}$ and $\bar{v}_j^{\gamma_j}$ are the symmetric and skew parts of $\bar{w}_j$ respectively (see more details in the proof of Proposition \ref{prop_w_mon}). This replacement is a graded endomorphism of the free $\#$-superalgebra $\mathcal{F}$ commuting with $\#$. Hence, (\ref{bet3}) is also not a $\#$-superidentity of $A.$ Therefore, we may conclude that, for some variables $x_j\in\{ y_{0,k_1+j}, z_{0,k_1+j}, y_{1,k_1+j}, z_{1,k_1+j}, 1\}$ for $j=1,2,\cdots,k_1+1$, one has
$$
\mathcal{E}_{\hat{\lambda}(1)} (x_1 y_{0,1} x_2 \cdots x_{k_1} y_{0,k_1} x_{k_1+1}) \not \equiv 0
$$
in $A$. Since $\mathcal{E}_{\hat{\lambda}(1)} = \sum_{\sigma \in S_{k_1}} \chi_{\hat{\lambda}(1)}(\sigma) \sigma$ and 
$$
\mathcal{E}_{\hat{\lambda}(1)}(x_1 y_{0,1} x_2 \cdots x_{k_1} y_{0,k_1} x_{k_1+1}) = E_{1,(d_1,l_1)}(y_{0,1}, \dots, y_{0,k_1}; x_1, \dots, x_{k_1+1}),
$$
we obtain
\[
E_{1,(d_1,l_1)}(y_{0,1}, \dots, y_{0,k_1}; x_1, \dots, x_{k_1+1}) = \sum_{\sigma \in S_{k_1}} \chi_{\hat{\lambda}(1)}(\sigma)x_1 y_{0,\sigma(1)} \cdots y_{0,\sigma(k_1)}x_{k_1+1} \notin Id_2^{\#}(A).
\]
This contradicts the hypothesis and proves the first part of the result.

Conversely, suppose that $m_{\langle\lambda\rangle} = 0$ in $\chi_{\cn}(A)$ whenever $\langle\lambda\rangle \notin \mathcal{H}_{\langle d,l\rangle}$. Consider $\langle\hat{\lambda}\rangle = (\hat{\lambda}(1),\hat{\lambda}(2),\hat{\lambda}(3),\hat{\lambda}(4))\vdash \langle k\rangle = (k_1,k_2,k_3,k_4)$, where $\hat{\lambda}(i)=((l_i+1)^{d_i+1}) \vdash k_i=(l_i+1)(d_i+1)$ for $i=1,2,3,4$. Let us fix any $i=1,2,3,4$ and consider for this $i$ the $\#$-superpolynomial $E_{i,(d_i,l_i)}$ for any possible evaluation of variables $x_1, \dots, x_{k_i +1}$ defined in (\ref{CapSup1}). Let $\langle t\rangle_i=(t_{i,1},t_{i,2},t_{i,3},t_{i,4})$ be the multidegree of $E_{i,(d_i,l_i)}$, where $t_{i,1}=deg_{Y_0} E_{i,(d_i,l_i)},$ $t_{i,2}=deg_{Z_0}  E_{i,(d_i,l_i)},$ $t_{i,3}=deg_{Y_1}  E_{i,(d_i,l_i)},$ $t_{i,4}=deg_{Z_1}  E_{i,(d_i,l_i)},$ and the degree of $E_{i,(d_i,l_i)}$ in its special variables is $t_{i,i}\geq k_i$. We have that the space $P_{\langle t \rangle_i}$ of all multilinear $\#$-superpolynomials of multidegree $\langle t \rangle_i$ is a left $F S_{\langle t \rangle_i}$-module, and particularly, $P_{\langle t\rangle_i}$ is a left $F S_{t_{i,i}}$-module if we consider only the action in the special variables of $E_{i,(d_i,l_i)}$.

It is well known (see, for example, \cite{RV,jkerb}), that $FS_{k_i} \mathcal{E}_{\hat{\lambda}(i)}=I_{\hat{\lambda}(i)}$
is a minimal two-sided ideal of $FS_{k_i}$ with the character $d_{\hat{\lambda}(i)} \chi_{\hat{\lambda}(i)}$, where 
$\mathcal{E}_{\hat{\lambda}(i)} = \displaystyle \sum_{\sigma_i\in S_{k_i}}\chi_{\hat{\lambda}(i)}(\sigma_i)\sigma_i\in F{S_{k_i}}.$
Recall that
$$E_{i,(d_i,l_i)}(a_{\gamma,1}, \dots, a_{\gamma,k_i}; x_1, \dots, x_{k_i+1})=\mathcal{E}_{\hat{\lambda}(i)}(x_1 a_{\gamma,1} x_2 \cdots x_{k_i} a_{\gamma,k_i} x_{k_i+1})$$
for any possible choice of $x_j,$ where $a_{\gamma, j}=y_{\gamma, j}$ if $i=1, 3$ or $a_{\gamma, j}=z_{\gamma, j}$ if $i=2, 4,$ and $\gamma=0$ if $i=1, 2$ or 
$\gamma=1$ if $i=3, 4.$
Then $M_i=FS_{t_{i,i}} E_{i,(d_i,l_i)}=FS_{t_{i,i}} \mathcal{E}_{\hat{\lambda}(i)} w$ is an $FS_{t_{i,i}}$-submodule of $P_{\langle t\rangle_i}$, where $FS_{t_{i,i}}$ acts only in the special variables of $E_{i,(d_i,l_i)}$, and $w$ is some multilinear $\#$-supermonomial of degree $t_{i,i} \geq k_i$ in the special variables. By the branching rule of the symmetric group, $FS_{t_{i,i}}$-character of $M_i$ is 
\[
\widetilde{\chi}(i)=\chi(FS_{t_{i,i}} E_{i,(d_i,l_i)}) = \sum_{\mu_i \vdash t_{i,i} \atop \mu_i \geq \hat{\lambda}(i)} \widetilde{m}_{\mu_i} \chi_{\mu_i}
\]
for some multiplicities $\widetilde{m}_{\mu_i}.$ Then by the natural properties of the tensor product of representations of groups the $\#$-character of the left $FS_{\langle t\rangle_i}$-module $FS_{\langle t\rangle_i} E_{i,(d_i,l_i)}$ is equal to
\begin{eqnarray*}
&&\widetilde{\chi}(1) \otimes \cdots \otimes \widetilde{\chi}(4) =
\chi(FS_{t_{i,1}} E_{i,(d_i,l_i)}) \otimes \cdots \otimes \chi(FS_{t_{i,4}} E_{i,(d_i,l_i)})=\\
&&\left( \sum_{\theta_1 \vdash t_{i,1}} \widetilde{m}_{\theta_1} \chi_{\theta_1} \right)
\otimes \cdots \otimes
\left(\sum_{\mu_i \vdash t_{i,i} \atop \mu_i \geq \hat{\lambda}(i)} \widetilde{m}_{\mu_i} \chi_{\mu_i} \right)
\otimes \cdots \otimes
\left( \sum_{\theta_4 \vdash t_{i,4}} \widetilde{m}_{\theta_4} \chi_{\theta_4} \right)
=\\
&&\sum_{ j \neq i} \sum_{\theta_j \vdash t_{i,j}} \sum_{\mu_i \vdash t_{i,i} \atop \mu_i \geq \hat{\lambda}(i)} 
(\widetilde{m}_{\theta_1} \cdots \widetilde{m}_{\mu_i} \cdots \widetilde{m}_{\theta_4}) \  \chi_{(\theta_1,\dots,\mu_i,\dots,\theta_4)},
\end{eqnarray*}
where $\chi_{(\theta_1,\dots,\mu_i,\dots,\theta_4)}=
\chi_{\theta_1} \otimes \cdots \otimes
\chi_{\mu_i} \otimes \cdots \otimes  \chi_{\theta_4}.$ 

Since the diagrams $D_{\mu_i}$ contain the diagram $D_{\hat{\lambda}(i)}$ for each partition $\mu_i\vdash t_{i,i}$ such that $\mu_i\geq\hat{\lambda}(i)$, this implies that the multipartition $\langle\hat{\mu}\rangle=(\theta_1,\ldots,\mu_i,\ldots,\theta_4)\notin \mathcal{H}(d_1,l_1;d_2,l_2;d_3,l_3;d_4,l_4)$ for all $\theta_j \vdash t_{i,j}$, $j\in \{1,2,3,4\}\setminus\{ i\}$. 
By Lemma \ref{Met1}, we have that 
\begin{eqnarray*}
FS_{\langle t\rangle_i} E_{i,(d_i,l_i)} \subseteq 
\bigoplus_{{\langle \hat{\mu} \rangle \vdash \langle t \rangle_i} \atop \hat{\mu} \notin \mathcal{H}_{\langle d, l \rangle}} \left(
\sum_{T_{\langle \hat{\mu} \rangle} \in \mathcal{ST_{\langle \hat{\mu} \rangle}}}
\sum_{f \in P_{\langle t\rangle_i}} \ 
FS_{\langle t\rangle_i} e_{T_{\langle \hat{\mu} \rangle}} f  \right).
\end{eqnarray*}
By hypothesis, $m_{\langle\hat{\mu}\rangle}=0$ in the $\langle t\rangle_i$-th $\#$-cocharacter of $A$ for all $\hat{\mu} \notin \mathcal{H}_{\langle d, l \rangle}.$
It follows from Proposition \ref{propetm} that
$e_{T_{\langle \hat{\mu} \rangle}} f \in Id_2^\#(A)$, for all
$f \in P_{\langle t\rangle_i,}$ standard multitableaux 
$T_{\langle \hat{\mu} \rangle}$ of the shape $\hat{\mu} \vdash \langle 
t \rangle_i,$ such that $\hat{\mu} \notin \mathcal{H}_{\langle d, l \rangle}.$ Thus, $FS_{\langle t\rangle_i} E_{i,(d_i,l_i)} \subseteq P_{\langle t\rangle_i} \cap \operatorname{Id}_2^\#(A)$ and, consequently, $E_{i,(d_i,l_i)} \equiv 0$ is a $\#$-superidentity of $A$ for any $i=1,2,3,4$ and for all possible evaluations of variables $x_1, \cdots, x_{k_i +1}$ defined in (\ref{CapSup1}).
\end{proof}

Theorem \ref{teoprin} jointly with Theorem \ref{AmPol} immediately implies the following corollary.

\begin{coro}
Any $\#$-PI-superalgebra over a field of characteristic zero
satisfies an Amitsur $\#$-superidentity $E_{\langle d,l\rangle}$  of some rank $\langle d,l\rangle=(d_1,l_1;d_1,l_2;d_3,l_3;d_4,l_4).$ 
\end{coro}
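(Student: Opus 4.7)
The plan is to combine the two main theorems of the paper in a single one-step argument. Since $A$ is a $\#$-PI-superalgebra over the field $F$ of characteristic zero, the hook theorem (Theorem \ref{teoprin}) supplies nonnegative integers $d_1,l_1,d_2,l_2,d_3,l_3,d_4,l_4$ such that, for every multi-index $\cn=(n_1,n_2,n_3,n_4)$, the $\cn$-th $\#$-cocharacter decomposes as
$$
\chi_{\cn}(A)=\sum_{\substack{\langle\lambda\rangle\vdash\cn\\ \langle\lambda\rangle\in\mathcal{H}_{\langle d,l\rangle}\cn}}m_{\langle\lambda\rangle}\chi_{\langle\lambda\rangle},
$$
i.e.\ the multiplicity $m_{\langle\lambda\rangle}$ is zero whenever $\langle\lambda\rangle\notin\mathcal{H}_{\langle d,l\rangle}$. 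I would first emphasize that the integers $d_i,l_i$ produced by Theorem \ref{teoprin} are uniform in $n$: inspecting its proof, the parameter $m$ is chosen depending only on the exponential codimension bound $\widehat{d}$ coming from Corollary \ref{corregv}, so the same quadruple hook $\mathcal{H}_{\langle d,l\rangle}$ controls $\chi_{\cn}(A)$ simultaneously for every multi-index $\cn$.

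With these fixed parameters in hand, I would then invoke the ``only if'' direction of Theorem \ref{AmPol}: the vanishing of all multiplicities $m_{\langle\lambda\rangle}$ outside $\mathcal{H}_{\langle d,l\rangle}$ forces $A$ to satisfy the Amitsur $\#$-superidentity $E_{\langle d,l\rangle}$ of rank $\langle d,l\rangle=(d_1,l_1;d_2,l_2;d_3,l_3;d_4,l_4)$, which is exactly the desired conclusion.

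There is no substantive obstacle here; the entire content of the corollary is carried by Theorems \ref{teoprin} and \ref{AmPol}, whose proofs occupy the body of the paper. The only point that deserves an explicit sentence in the write-up is the uniformity of $(d_i,l_i)$ across all $\cn$, to make clear that a single rank $\langle d,l\rangle$ is simultaneously valid for every $\#$-cocharacter and therefore provides a single Amitsur $\#$-superidentity satisfied by $A$.
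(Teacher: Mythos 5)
Your proposal is correct and matches the paper's own argument, which likewise derives the corollary immediately by combining Theorem \ref{teoprin} with Theorem \ref{AmPol}; your explicit remark on the uniformity of the parameters $(d_i,l_i)$ over all $\cn$ is a reasonable clarification rather than a new idea. One small slip: the implication you need (vanishing of the multiplicities outside the hook forces $A$ to satisfy $E_{\langle d,l\rangle}$) is the \emph{if} direction of Theorem \ref{AmPol}, not the \emph{only if} direction, but this does not affect the validity of the argument.
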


\end{document}